 \titleformat{\section}[block]{\normalfont \bfseries\large}{\hfil\thesection.}{0.25em}{}
 \titleformat{\subsection}[runin]{\normalfont\bf}{\thesubsection.}{0.25em}{}[.\quad]
\renewcommand{\see}[1]{(see~\ref{#1})}
\newcommand{\cf}[1]{(cf.~\ref{#1})}
\newcommand{\iso}{\cong}
\newcommand{\out}[3]{{}^{#1} \ifx #2A\mkern-5mu \else\mkern-4mu\fi{#2}_{#3}}
\renewcommand{\int}{\mathop{\rm int}}
\newcommand{\integer}{{\mathord{\mathbb{Z}}}}
\newcommand{\rational}{{\mathord{\mathbb{Q}}}}
\newcommand{\real}{{\mathord{\mathbb{R}}}}
\newcommand{\complex}{{\mathord{\mathbb{C}}}}
\newcommand{\Gmult}{{\mathord{\mathbb{G}_m}}}
\newcommand{\aG}{\mathbf{G}}
\newcommand{\aH}{\mathbf{H}}
\newcommand{\aM}{\mathbf{M}}
\newcommand{\aT}{\mathbf{T}}
\newcommand{\aS}{\mathbf{S}}
\newcommand{\aR}{\mathbf{R}}
\newcommand{\aB}{\mathbf{B}}
\newcommand{\aC}{\mathbf{C}}
\newcommand{\aZ}{\mathbf{Z}}
\newcommand{\aP}{\mathbf{P}}
\DeclareMathOperator{\aSpin}{\mathop{\mathbf{Spin}}}
\newcommand{\fieldrank}[1]{\mathop{\mbox{\upshape$#1$-rank}}}
\newcommand{\Qrank}{\fieldrank{\rational}}
\newcommand{\Rrank}{\fieldrank{\real}}
\newcommand{\Frank}{\fieldrank{F}}
\newcommand{\locrank}{\fieldrank{F_v}}
\DeclareMathOperator{\rank}{rank}
\newcommand{\ints}{\mathcal{O}}
\newcommand{\SG}{S_{\aG}}
\DeclareMathOperator{\SL}{SL}
\DeclareMathOperator{\SU}{SU}
\DeclareMathOperator{\SO}{SO}
\DeclareMathOperator{\Sp}{Sp}
\newcommand{\Mat}[1]{M_{#1}}
\DeclareMathOperator{\Gal}{Gal}
\DeclareMathOperator{\aSL}{\mathbf{SL}}
\DeclareMathOperator{\aSO}{\mathbf{SO}}
\DeclareMathOperator{\aSU}{\mathbf{SU}}
\DeclareMathOperator{\aSp}{\mathbf{Sp}}
\newcommand{\res}[1]{\mathop{\mathbf{R}_{#1}}}
\newcommand{\pref}[1]{{\upshape(}\ref{#1}{\upshape)}}
\newcommand{\fullref}[2]{{\ref{#1}\pref{#1-#2}}}
\renewcommand{\thesubsection}{\arabic{section}\Alph{subsection}}
\numberwithin{equation}{section}
\numberwithin{figure}{section}
\newtheoremstyle{myplain}{}{}{\normalfont\itshape}{}{\normalfont\bf}{.}{ }%
{\thmnumber{{\normalfont(#2)}}\thmname{ #1}\thmnote{ \normalfont\textup{#3}}}
\newtheoremstyle{mydefinition}{}{}{\normalfont}{}{\normalfont\bf}{.}{ }%
{\thmnumber{{\normalfont(#2)}}\thmname{ #1}\thmnote{ \normalfont\textup{#3}}}
\newtheoremstyle{myremark}{}{}{\normalfont}{}{\normalfont\itshape}{.}{ }%
{\ignorespaces\thmnumber{{\normalfont(#2)}}\thmname{ #1}\thmnote{ \normalfont\textup{#3}}}
\newtheoremstyle{myremark*}{}{}{\normalfont}{}{\normalfont\itshape}{.}{ }%
{\ignorespaces\thmname{ #1}\thmnote{ \normalfont\textup{#3}}}
\theoremstyle{myplain}
\newtheorem{thm}[equation]{Theorem}
\newtheorem{lem}[equation]{Lemma}
\newtheorem{cor}[equation]{Corollary}
\newtheorem{prop}[equation]{Proposition}
\theoremstyle{mydefinition}
\newtheorem{defn}[equation]{Definition}
\newtheorem{assump}[equation]{Assumption}
\newtheorem{assumps}[equation]{Assumptions}
\newtheorem{eg}[equation]{Example}
\newtheorem{notation}[equation]{Notation}
\theoremstyle{myremark}
\newtheorem{rem}[equation]{Remark}
\newtheorem{fact}[equation]{Fact}
\theoremstyle{myremark*}
\newtheorem{acks}{Acknowledgments}
 \newenvironment{claim}[1][\unskip]{\em
 \medskip \noindent Claim.\ }{\unskip\upshape}
 \newcounter{step}
 \newenvironment{step}[1][\unskip]{\refstepcounter{step}\em
 \medskip \noindent Step \thestep\ #1.\ }{\unskip\upshape}
 \renewcommand{\thestep}{\arabic{step}}
 \newcounter{case}
 \newenvironment{case}[1][\unskip]{\refstepcounter{case}\it
 \medskip \noindent Case \thecase\ #1.\it\ }{\unskip\upshape}
 \renewcommand{\thecase}{\arabic{case}}
 \newcounter{subcase}
 \newenvironment{subcase}[1][\unskip]{\refstepcounter{subcase}\em
 \medskip \noindent Subcase \thesubcase\ #1.\ }{\unskip\upshape}
 \numberwithin{subcase}{case}
 \newcounter{subsubcase}
 \numberwithin{subsubcase}{subcase}
\begin{document}

\title[Almost-minimal lattices of higher rank]
 {Almost-minimal nonuniform lattices of higher rank}

\author[V.\,Chernousov]{Vladimir Chernousov}
 \address{Department  of Mathematical and Statistical Sciences,
 University of Alberta,
 Edmonton,  Alberta, T6G~2G1, Canada}
 \email{chernous@math.ualberta.ca}

\author[L.\,Lifschitz]{Lucy Lifschitz}
 \address{Department of Mathematics,
 University of Oklahoma,
 Norman, Oklahoma, 73019, USA}
 \email{LLifschitz@math.ou.edu, 
 http://www.math.ou.edu/$\sim$llifschitz/}

\author[D.\,W.\,Morris]{Dave Witte Morris}
 \address{Department of Mathematics and Computer Science,
 University of Lethbridge,
 Lethbridge, Alberta, T1K~3M4, Canada}
 \email{Dave.Morris@uleth.ca, 
 http://people.uleth.ca/$\sim$dave.morris/}
 
 \thanks{V.\,Chernousov was partially supported by the Canada Research
Chairs Program and an NSERC research grant.
D.\,W.\,Morris was partially supported by research grants from NSERC and the NSF}

\date{November 6, 2007} 

\maketitle

\section{Introduction}

We find the minimal elements in three different (but essentially equivalent) partially ordered categories of
mathematical objects:
\begin{enumerate} \renewcommand{\theenumi}{\Alph{enumi}}
\item \label{category-geom}
 finite-volume, noncompact, complete, locally symmetric spaces of higher rank,
\item \label{category-Lie}
 nonuniform, irreducible lattices in semisimple Lie groups of higher real rank,
and
\item \label{category-Qgrp}
 isotropic, simple algebraic $\rational$-groups of higher real rank.
\end{enumerate}
The main interest is in categories \pref{category-geom} and~\pref{category-Lie}, but the proof is carried out using the machinery of~\pref{category-Qgrp}.
(For completeness, we also provide a generalization that applies to algebraic groups over any number field, not only~$\rational$.)
Justification of the examples and facts stated in the introduction can be found in \S\ref{justify}.

\subsection{Locally symmetric spaces} \label{SymmspaceSubsec}

It is well known that if $G$ is a connected, semisimple Lie group, and $\Rrank G \ge 2$, then $G$ contains a closed subgroup that is locally isomorphic to either $\SL_3(\real)$ or $\SL_2(\real) \times \SL_2(\real)$. Passing from semisimple Lie groups to the corresponding symmetric spaces yields the following geometric translation of this observation.

\begin{fact} \label{GlobalFact}
Let $\widetilde{X}$ be a symmetric space of noncompact type, with no Euclidean factors, such that $\rank \widetilde{X} \ge 2$. Then $\widetilde{X}$ contains a totally geodesic submanifold~$\widetilde{X}'$, such that $\widetilde{X}'$ is the symmetric space associated to either $\SL_3(\real)$ or $\SL_2(\real) \times \SL_2(\real)$.
In other words, $\widetilde{X}'$ is isometric to either
\begin{enumerate} \renewcommand{\itemsep}{\medskipamount}
\item \label{GlobalFact-SL3}
$\SL_3(\real)/\SO(3) \iso 
\left\{ \begin{matrix}
\mbox{$3 \times 3$ positive-definite symmetric} \\
\mbox{real matrices of determinant~$1$} 
\end{matrix}
\right\}$,
or
\item the product $\mathbb{H}^2 \times \mathbb{H}^2$ of $2$ hyperbolic planes.
\end{enumerate}
\end{fact}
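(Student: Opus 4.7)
The plan is to deduce the statement from the Lie-group observation recalled immediately before it. Write $\widetilde{X} = G/K$, where $G$ is a connected semisimple Lie group of noncompact type (for instance, the identity component of $\operatorname{Isom}(\widetilde{X})$) and $K$ is a maximal compact subgroup; then $\Rrank G = \rank \widetilde{X} \ge 2$, so by the stated observation $G$ contains a closed connected subgroup $H$ locally isomorphic to $\SL_3(\real)$ or $\SL_2(\real) \times \SL_2(\real)$. The goal is then to choose $H$ so that its orbit $\widetilde{X}' := H \cdot o$ through the basepoint $o = eK$ is a totally geodesic submanifold of $\widetilde{X}$ isometric to the associated symmetric space of $H$.

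To accomplish this, I would inspect the standard construction of $H$. Fix the Cartan decomposition $\mathfrak{g} = \mathfrak{k} \oplus \mathfrak{p}$ determined by $K$, a maximal abelian $\mathfrak{a} \subset \mathfrak{p}$, and the restricted root space decomposition $\mathfrak{g} = \mathfrak{g}_0 \oplus \bigoplus_{\alpha \in \Phi} \mathfrak{g}_\alpha$. Since $\dim \mathfrak{a} \ge 2$, any two linearly independent roots span a plane whose intersection with $\Phi$ is a rank-$2$ subsystem; running through the rank-$2$ classification ($A_1 \times A_1$, $A_2$, $B_2$, $G_2$, $BC_2$) shows that such a subsystem always contains a further sub-root system of type $A_2$ or $A_1 \times A_1$ (the long roots of $G_2$ form an $A_2$; the long or short roots of $B_2$ or $BC_2$ form an $A_1 \times A_1$). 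Let $\Phi'$ be such a subsystem, pick nonzero $X_\alpha \in \mathfrak{g}_\alpha$ for each $\alpha \in \Phi'$ (in the $A_2$ case, chosen so that $[X_\alpha, X_\beta] \neq 0$ for a simple pair), and let $\mathfrak{h}$ be the real Lie subalgebra generated by the $X_\alpha$'s together with their Cartan-involution images $\theta X_\alpha$.

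Because $\theta$ permutes the root spaces by $\mathfrak{g}_\alpha \mapsto \mathfrak{g}_{-\alpha}$, the subalgebra $\mathfrak{h}$ is automatically $\theta$-stable, so $\mathfrak{h} = (\mathfrak{h} \cap \mathfrak{k}) \oplus (\mathfrak{h} \cap \mathfrak{p})$ is a Cartan decomposition, and $\mathfrak{h} \cap \mathfrak{p}$ is a Lie triple system in $\mathfrak{p}$. By E.~Cartan's criterion, the orbit $\widetilde{X}' = H \cdot o$ (where $H$ is the connected subgroup with Lie algebra $\mathfrak{h}$) is therefore totally geodesic in $\widetilde{X}$ and isometric to $H/(H \cap K)$. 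For $H$ locally isomorphic to $\SL_3(\real)$ this gives $\widetilde{X}' \iso \SL_3(\real)/\SO(3)$, and for $H$ locally isomorphic to $\SL_2(\real) \times \SL_2(\real)$ it gives $\widetilde{X}' \iso \mathbb{H}^2 \times \mathbb{H}^2$.

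The main obstacle is identifying $\mathfrak{h}$ precisely — in particular, confirming in the $A_2$ case that it is the split form $\mathfrak{sl}_3(\real)$ rather than $\mathfrak{su}(2,1)$ or $\mathfrak{su}(3)$. This is forced by the construction: the abelian subspace $\mathfrak{a} \cap \mathfrak{h}$ is a Cartan subalgebra of $\mathfrak{h}$ that lies in $\mathfrak{p}$ and is therefore $\real$-diagonalizable, so $\mathfrak{h}$ has $\real$-rank $2$; among real forms of $\mathfrak{sl}_3(\complex)$ only the split form has $\real$-rank $2$. The analogous $\real$-split Cartan argument identifies each summand in the $A_1 \times A_1$ case as $\mathfrak{sl}_2(\real)$ rather than $\mathfrak{su}(2)$.
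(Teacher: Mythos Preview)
The paper does not actually prove this statement: it is labeled a ``Fact,'' introduced as the ``geometric translation'' of the well-known Lie-group observation, and no argument is given beyond that sentence. Your proposal supplies a genuine proof, and the strategy---build a $\theta$-stable subalgebra $\mathfrak{h}$ directly from restricted root vectors, so that $\mathfrak{h}\cap\mathfrak{p}$ is a Lie triple system and the orbit through~$o$ is automatically totally geodesic---is correct. Your identification of $\mathfrak{h}$ in the $A_2$ case is also correct: the key computation is that $[X_\alpha,\theta X_\alpha]\in\mathfrak{a}$ (since it lies in $\mathfrak{g}_0\cap\mathfrak{p}=\mathfrak{a}$) and that, because $\alpha-\beta$ is not a root, the Jacobi identity forces $[X_\alpha,\theta X_{\alpha+\beta}]$ to be a scalar multiple of $\theta X_\beta$, so the generated algebra is exactly $8$-dimensional with a $2$-dimensional split Cartan.

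One small correction: in the $A_1\times A_1$ case you need $\alpha\pm\beta$ to be \emph{non-roots of the ambient system}~$\Phi$, not merely of~$\Phi'$ (otherwise $[X_\alpha,X_\beta]$ may be nonzero and $\mathfrak{h}$ would be larger than $\mathfrak{sl}_2\oplus\mathfrak{sl}_2$). The short roots of $B_2$ or $BC_2$ fail this---e.g.\ $e_1+e_2$ is a root---so your parenthetical ``long or short'' should read ``long''; the long roots are strongly orthogonal in every case and the argument then goes through unchanged.

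For comparison, when the paper does carry out the passage from subgroup to totally geodesic submanifold (in \S\ref{justify}, for the locally symmetric Theorem~\ref{LocSymmThm}), it takes the black-box route: start from an arbitrary semisimple $H\subset G$ and invoke Mostow's theorem \cite{MostowSelfAdjoint} to conjugate $H$ to be invariant under a Cartan involution. Your explicit construction avoids that citation at the cost of the root-system bookkeeping above.
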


In short, among all the symmetric spaces of noncompact type with rank $\ge 2$, there are only two manifolds that are minimal with respect to the partial order defined by totally geodesic embeddings. Our main theorem provides an analogue of this result for noncompact finite-volume spaces that are locally symmetric, rather than globally symmetric, but, in this setting, the partial order has infinitely many minimal elements.

\begin{thm} \label{LocSymmThm}
Let $X$ be a finite-volume, noncompact, irreducible, complete, locally symmetric space of noncompact type, with no Euclidean factors {\rm(}locally\/{\rm)}, such that $\rank X \ge 2$. Then there is a finite-volume, noncompact, irreducible, complete, locally symmetric space~$X'$, such that $X'$ admits a totally geodesic, proper immersion into~$X$, and the universal cover of~$X'$ is the symmetric space associated to either\/
$\SL_3(\real)$, $\SL_3(\complex)$, or a direct product\/ $\SL_2(\real)^m \times \SL_2(\complex)^n$, with $m + n \ge 2$.
\end{thm}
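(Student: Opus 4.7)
The plan is to exploit the equivalence outlined in the introduction between locally symmetric spaces, irreducible lattices, and $\rational$-isotropic simple algebraic $\rational$-groups, reducing the theorem to the construction of a suitable $\rational$-subgroup $\aH$ of the ambient algebraic group $\aG$, and then to carry out the construction by a case analysis based on the Tits classification.

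I would first apply the Margulis arithmeticity theorem to the irreducible, higher-rank, nonuniform lattice $\Gamma = \pi_1(X)$ inside the identity component $G$ of the isometry group of $\widetilde X$. This yields a connected semisimple algebraic $\rational$-group $\aG$ together with a continuous surjection $\aG(\real) \to G$ with compact kernel, under which $\Gamma$ is commensurable with the image of $\aG(\integer)$. Irreducibility of $\Gamma$ (combined with restriction of scalars from the natural field of definition down to $\rational$) lets me take $\aG$ to be $\rational$-simple, and nonuniformity forces $\aG$ to be $\rational$-isotropic. Under this dictionary, producing $X'$ with a totally geodesic proper immersion into $X$ is equivalent, up to commensurability and compact factors, to producing a semisimple $\rational$-isotropic $\rational$-subgroup $\aH \subseteq \aG$ whose real points are locally isomorphic to $\SL_3(\real)$, $\SL_3(\complex)$, or $\SL_2(\real)^m \times \SL_2(\complex)^n$ with $m+n \ge 2$; these three targets correspond respectively to $\aSL_3$ over $\rational$, $\res{K/\rational}\aSL_3$ for an imaginary quadratic field $K$, and $\res{F/\rational}\aSL_2$ for a number field $F$ with $m$ real and $n$ complex embeddings.

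To construct $\aH$, I would fix a maximal $\rational$-split torus $\aS \subseteq \aG$ and examine the relative root system $\Phi(\aG, \aS)$. When $\Qrank \aG \ge 2$, the system $\Phi(\aG, \aS)$ always contains either a subsystem of type $A_2$ or two mutually orthogonal copies of $A_1$, and Borel--Tits theory produces a $\rational$-subgroup generated by the associated root subgroups; an analysis of the Tits index then identifies a candidate $\aH$ isogenous to $\aSL_3$, to an outer form of $\aSL_3$ split by an imaginary quadratic field, or to $\res{F/\rational}\aSL_2$. When $\Qrank \aG = 1$, the $\rational$-root subgroups supply only a single rank-one factor, so the hypothesis $\Rrank \aG \ge 2$ must be used to extract noncompact directions from the anisotropic $\rational$-kernel of $\aG$; here I would build $\aH$ as an $\rational$-form of $\res{F/\rational}\aSL_2$ sitting inside a reductive centralizer, with $F$ chosen so that its infinite places account for the extra real rank.

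The main obstacle will be the $\Qrank \aG = 1$ case, since the relative root system is too small to exhibit $\aH$ directly and one must comb through the Tits classification to verify, for every admissible index with $\Qrank \aG = 1$ and $\Rrank \aG \ge 2$, that the anisotropic kernel does contain an appropriate $\rational$-form of $\aSL_2$ or its restriction of scalars. A secondary difficulty is the Galois-descent bookkeeping required to match $\aH(\real)$ to the admissible list---distinguishing inner and outer $A_2$-forms so that $\aH(\real)$ is actually $\SL_3(\real)$ or $\SL_3(\complex)$ rather than, say, $\SU(2,1)$, and identifying which fields $F$ arise in the $\res{F/\rational}\aSL_2$ construction---so that the resulting space $X'$ is irreducible as required.
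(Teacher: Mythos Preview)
Your overall strategy matches the paper's: translate via Margulis arithmeticity and superrigidity to a statement about isotropic $\rational$-simple groups, find a suitable $\rational$-subgroup $\aH$ by case analysis on the Tits index, and pull back to a totally geodesic immersion (for the last step the paper invokes Mostow's theorem on self-adjoint groups to align a Cartan involution with $\aH$, and a standard properness argument from Raghunathan).

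There is, however, a concrete gap in your list of target $\rational$-groups. You assert that the three real targets correspond to $\aSL_3$, $\res{K/\rational}\aSL_3$ for $K$ imaginary quadratic, and $\res{F/\rational}\aSL_2$. But the first two are not always available as $\rational$-subgroups of~$\aG$. For instance, if $\aG = \aSU_3(L,f,\tau)$ with $L$ a real quadratic field and $f$ as in Example~\ref{SU2Egs}, then $\aG(\real)\cong\SL_3(\real)$ and $\Rrank\aG=2$, but $\Qrank\aG=1$, so $\aG$ contains no copy of the split $\aSL_3$ (which has $\Qrank\ 2$); in fact $\aG$ is itself minimal. The correct list of minimal $\aH$'s (Theorem~\ref{QgrpExplicitThm}) therefore includes the \emph{quasisplit outer} forms $\aSU_3(L,f,\tau)$ with $L$ real quadratic and $\res{K/\rational}\aSU_3(L,f,\tau)$ with $K$ imaginary quadratic; these have the desired real points $\SL_3(\real)$ and $\SL_3(\complex)$, but are genuinely different $\rational$-groups from your candidates, and the case analysis must produce them. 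Your worry about landing on $\SU(2,1)$ is resolved precisely by the constraint that $L\subset F_v$, which is part of the output of that analysis, not an extra bookkeeping step.

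As you anticipate, the $\Qrank=1$ case is where essentially all the work lies; the paper carries out a full case-by-case argument over the classical types (special linear, orthogonal, unitary of first and second kind) and then separately over $F_4$, $\out{3,6}D4$, and $\out2E6$, using Weak Approximation and explicit constructions inside the anisotropic kernel rather than a single ``$\res{F/\rational}\aSL_2$ in a reductive centralizer'' recipe.
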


\begin{rem}
The symmetric space associated to $\SL_3(\real)$ is given in~\fullref{GlobalFact}{SL3}. The others are:
\begin{enumerate} \renewcommand{\itemsep}{\medskipamount}
\item $\SL_3(\complex)/\SU(3) \iso
\left\{ \begin{matrix}
\mbox{$3 \times 3$ positive-definite Hermitian} \\
\mbox{matrices of determinant~$1$} 
\end{matrix}
\right\}$,
and
\item the product $(\mathbb{H}^2)^m \times (\mathbb{H}^3)^n$ of $m$~hyperbolic planes and $n$~hyperbolic $3$-spaces. 
\end{enumerate}
\end{rem}

\begin{rem} \ 
\begin{enumerate}
\item Our main result actually provides a precise description of~$X'$ (modulo finite covers), not only its universal cover. It does this by specifying the fundamental group $\pi_1(X')$; the possible fundamental groups appear in \S\ref{LatticeSubsec}.
\item Our proof of the theorem is constructive: for a given locally symmetric space~$X$, our methods produce an explicit locally symmetric space~$X'$ that embeds in~$X$.
\item Our theorem assumes $X$ is \emph{not} compact. It would be interesting to obtain an analogous result that assumes $X$ is compact (and $X'$ is also compact). 
\end{enumerate}
\end{rem}

The Mostow Rigidity Theorem tells us that any locally symmetric space~$X$ as discussed above is determined by its fundamental group. This means that the above geometric result can be reformulated in group-theoretic terms. This restatement of the result is our next topic.

\subsection{Lattices in semisimple Lie groups} 
\label{LatticeSubsec}

\begin{defn}
Let us say that an abstract group~$\Gamma$ is a \emph{nonuniform lattice
of higher rank} if
 there exists a connected, semisimple, linear (real) Lie group~$G$,
 such that
 \begin{itemize}
 \item $\Gamma$ is isomorphic to an irreducible, \emph{nonuniform}
lattice in~$G$,
 and
 \item $\Rrank G \ge 2$.
 \end{itemize}
 (Recall that a discrete subgroup~$\Gamma'$ of~$G$ is a \emph{nonuniform
lattice} if $G/\Gamma'$ has finite volume, but is \emph{not} compact. The
lattice~$\Gamma'$ is \emph{irreducible} if no finite-index subgroup
of~$\Gamma'$ is isomorphic to a direct product $\Gamma_1' \times
\Gamma_2'$ with both $\Gamma_1'$ and~$\Gamma_2'$ infinite.)
\end{defn}

\begin{rem}
The nonuniform lattices of higher rank have made many appearances in the literature.
For example, the Margulis Arithmeticity Theorem \cite{[M73]} was first proved for this class of groups, and M.\,S.\,Raghunathan \cite{[R76],[R86]} proved the Congruence Subgroup Property for these groups.
\end{rem}

It is obvious that the collection of all nonuniform lattices of higher rank is
closed under passage to finite-index subgroups, so 
it has no elements that are minimal under inclusion. Thus, it is
natural to consider a slightly weaker notion of minimality that
ignores finite-index subgroups.

\begin{defn}
 A nonuniform lattice~$\Gamma$ of higher rank is \emph{almost minimal} if no
subgroup of infinite index in~$\Gamma$ is a nonuniform lattice of higher rank.
 \end{defn}
 
Our main result describes all the almost-minimal nonuniform lattices of higher rank.
The significance of this result lies in the fact that every nonuniform lattice of
higher rank must contain an almost-minimal one, so, for example, they can be the base
cases in a proof by induction.

\begin{thm}
Every almost-minimal nonuniform lattice of higher rank is isomorphic to a nonuniform, irreducible lattice in either\/ $\SL_3(\real)$, $\SL_3(\complex)$, or a direct product\/ $\SL_2(\real)^m \times \SL_2(\complex)^n$, with $m + n \ge 2$.
\end{thm}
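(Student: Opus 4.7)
The plan is to translate the problem into the category of algebraic $\rational$-groups via the Margulis Arithmeticity Theorem, and then identify the minimal objects there by showing that every non-minimal algebraic group contains a proper subgroup giving a smaller nonuniform lattice of higher rank.

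By Margulis Arithmeticity, $\Gamma$ is commensurable (modulo a compact factor) with $\aG(\integer)$ for some simply connected, $\rational$-simple algebraic $\rational$-group $\aG$: irreducibility of $\Gamma$ corresponds to $\rational$-simplicity of $\aG$, nonuniformity forces $\aG$ to be $\rational$-isotropic by Godement's compactness criterion, and higher real rank translates to $\Rrank \aG \ge 2$. Writing $\aG = \res{F/\rational} \aG'$ for an absolutely simple $F$-group $\aG'$, the $\rational$-isotropy of $\aG$ is equivalent to the $F$-isotropy of $\aG'$, and $\Rrank \aG$ is the sum over the archimedean places of $F$ of the local real ranks of $\aG'$.

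The central mechanism is this: any proper $\rational$-subgroup $\aH \subseteq \aG$ that is $\rational$-simple, $\rational$-isotropic, and satisfies $\Rrank \aH \ge 2$ yields an arithmetic subgroup $\aH(\integer)$ which is a nonuniform lattice of higher rank contained in $\aG(\integer)$ with infinite index, contradicting almost-minimality of $\Gamma$. The listed candidates for $\aG$ are precisely those admitting no such $\aH$: $\aSL_3$ over $\rational$ has real rank $2$ and no proper $\rational$-subgroup of real rank~$\ge 2$; $\res{F/\rational}\aSL_3$ for imaginary quadratic $F$ has real rank $2$, and $\aSL_3$ over $F$ has no proper $F$-subgroup of $F$-rank $\ge 2$; and $\res{F/\rational}\aSL_2$ for $F$ with $m$ real and $n$ complex places, $m + n \ge 2$, is minimal because $\aSL_2$ is the smallest $F$-isotropic simple group over any field and the field $F$ is chosen so that no proper subfield $E$ makes $\res{E/\rational}\aSL_2$ itself of real rank $\ge 2$.

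In all other cases, $\aH$ is constructed by one of two complementary strategies. \emph{Shrinking the algebraic group:} if $\Frank \aG' \ge 2$ and $\aG' \not\cong \aSL_3$, the relative root system of $\aG'$ over $F$ yields a proper $F$-subgroup $\aH' \subsetneq \aG'$ whose derived group contains either $\aSL_3$ (from an $A_2$-subsystem of relative roots) or a commuting pair of $F$-isotropic rank-one groups (from an $A_1 \times A_1$-subsystem), and one sets $\aH = \res{F/\rational} \aH'$. \emph{Shrinking the field:} if $F$ has a proper subfield $E$ over which a suitable $E$-form $\aH''$ of $\aG'$ (or of an appropriate subgroup) is defined and for which $\res{E/\rational} \aH''$ already has real rank $\ge 2$, one sets $\aH = \res{E/\rational} \aH''$. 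The main obstacle is carrying out this case analysis uniformly across the Tits classification of absolutely simple $F$-groups---classical forms constructed from Hermitian or skew-Hermitian forms over central simple $F$-algebras with involution, orthogonal groups of quadratic forms of Witt index $\ge 2$, and the exceptional types---while simultaneously controlling the subfield and archimedean structure of $F$ and verifying the claimed minimality of the three listed candidates.
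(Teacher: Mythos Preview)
Your overall strategy---translate to isotropic almost-simple $\rational$-groups via Margulis Arithmeticity and then classify the minimal ones---is exactly the paper's approach. However, your claimed list of minimal $\rational$-groups is wrong, and this is a genuine error rather than an omitted detail. You assert that $\res{F/\rational}\aSL_3$ for $F$ imaginary quadratic is minimal because ``$\aSL_3$ over $F$ has no proper $F$-subgroup of $F$-rank $\ge 2$.'' But that is the wrong criterion: one needs no proper isotropic $\rational$-subgroup of real rank $\ge 2$, and $\aSL_3$ over~$\rational$ embeds in $\res{F/\rational}\aSL_3$ via the obvious inclusion $\rational \hookrightarrow F$. Since $\aSL_3$ is $\rational$-isotropic with $\Rrank = 2$, the group $\res{F/\rational}\aSL_3$ is \emph{not} minimal. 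The minimal $\rational$-groups whose real points are $\SL_3(\real)$ or $\SL_3(\complex)$ are instead the quasisplit unitary groups $\aSU_3(L,f,\tau)$ (for $L$ a real quadratic extension of~$\rational$) and $\res{K/\rational}\aSU_3(L,f,\tau)$ (for $K$ imaginary quadratic), which you have entirely omitted; see Examples~\ref{SU2Egs} and Theorem~\ref{QgrpExplicitThm}.

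This omission also reveals that your two ``shrinking'' strategies are not exhaustive. The $\aSU_3$ groups have $\Qrank = 1$, so your first strategy (using an $A_2$ or $A_1 \times A_1$ subsystem of relative roots) does not apply; and when $F = \rational$ there is no subfield to shrink to, so your second strategy is also inapplicable. In fact the $\Frank = 1$ case is where essentially all the work lies: one must go through the Tits classification type by type (special linear, orthogonal, unitary of both kinds, and the exceptional types $F_4$, $\out{3,6}D_4$, $\out{1,2}E_6$), and in each case construct---by hand, using the explicit description of the anisotropic kernel together with Weak Approximation and the Hasse Principle---an isotropic proper $F$-subgroup whose local ranks are $\ge 2$ at all the relevant places. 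Your outline does not indicate how to do this, and the unitary-group examples show that the answer is not simply ``restriction of scalars of $\aSL_2$ or $\aSL_3$.''
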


We now describe the almost-minimal lattices more explicitly.

\begin{eg} \label{Qrank2eg}
 $\SL_3(\integer)$ is an almost-minimal nonuniform lattice of higher rank. 
\end{eg}

\begin{rem}
$\SL_3(\integer)$ is an arithmetic group whose $\Qrank$
is~$2$. It is well known that any irreducible lattice~$\Gamma$ with $\Qrank \Gamma
\ge 2$ must contain a finite-index subgroup of either $\SL_3(\integer)$ or 
$\Sp_4(\integer)$, and one can show that $\Sp_4(\integer)$ is \emph{not}
almost minimal. Therefore, up to finite-index, $\SL_3(\integer)$ is the only
almost-minimal lattice of higher rank whose $\Qrank$ is $\ge 2$.
 \end{rem}

Although (up to finite index) there is only one almost-minimal nonuniform lattice whose
$\Qrank$ is~$2$, there are infinitely many whose $\Qrank$ is~$1$.

\begin{eg} \label{SL2Egs} \ 
 \begin{enumerate} \itemsep=\smallskipamount
 \item \label{SL2Egs-quad}
 If $r$ is any square-free integer $\ge 2$, then $\SL_2 \bigl(
\integer \bigl[\sqrt{r} \bigr] \bigr)$ is an almost-minimal nonuniform lattice of higher rank.
 \item \label{SL2Egs-any}
 More generally, let $\ints_K$ be the ring of integers of an algebraic
number field~$K$, and assume $K$ is neither $\rational$ nor an imaginary
quadratic extension of~$\rational$. Then $\Gamma = \SL_2(\ints_K)$ is a
nonuniform lattice of higher rank. (We remark that if $K$ is a totally real extension
of~$\rational$, as is the case in~\pref{SL2Egs-quad}, then
$\Gamma$ is called a \emph{Hilbert modular group}.) This nonuniform lattice is almost minimal
if and only if each proper subfield of~$K$ is either~$\rational$ or an
imaginary quadratic extension of~$\rational$.
\end{enumerate}
\end{eg}

\begin{eg} \label{SU2Egs} \ 
 Let
 \begin{itemize}
 \item $F$ be either the field~$\rational$ or an imaginary quadratic
extension of~$\rational$, 
 \item $F_v = 
 \begin{cases}
 \real & \text{if $F = \rational$} , \\
 \complex & \text{if $F \not\subset \real$} ,
 \end{cases}$
 \item $L$ be any quadratic extension of~$F$, such that $L \subset
F_v$,
 \item $\tau$ be the nontrivial Galois automorphism of~$L$ over~$F$,
 \item $f$ be the $\tau$-Hermitian form on~$L^3$ defined by
 $$ f(x,y) = \tau(x_1) \, y_1 - \tau(x_2) \, y_2 - \tau(x_3) \, y_3 ,$$
 and
 \item $\ints_L$ be the ring of integers of~$L$.
 \end{itemize}
 Then 
 $$ \SU_3(\ints_L, f, \tau) = \{\, A \in \SL_3(\ints_L) \mid f(Ax,Ay) =
f(x,y), \ \forall x,y \in L^3 \,\} $$
 is a nonuniform lattice in $\SL_3(F_v)$, so it is a nonuniform lattice of higher rank. 
  It is almost minimal if and only if either $F = \rational$ or $L \cap
\real = \rational$.
 \end{eg}

The preceding examples are well known (and are of classical type). Our main result shows there are no others:

\begin{thm} \label{LattThm}
 Every nonuniform lattice of higher rank contains a subgroup that is isomorphic to a finite-index
subgroup of a lattice described in Example\/ \ref{Qrank2eg}, \fullref{SL2Egs}{any}, or\/~\ref{SU2Egs}.
 \end{thm}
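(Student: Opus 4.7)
The plan is to combine the Margulis Arithmeticity Theorem with the Tits classification of isotropic simple algebraic groups over number fields. By Arithmeticity, $\Gamma$ is commensurable with $\aG(\ints_F)$ for some absolutely simple, simply-connected $F$-group $\aG$ and number field $F$; nonuniformity gives $\Frank \aG \ge 1$, higher rank translates to $\sum_{v\mid\infty} \locrank \aG \ge 2$, and irreducibility forces $\aG$ to be absolutely simple rather than a product.

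First I would split on $\Frank \aG$. If $\Frank \aG \ge 2$, the relative root system of $\aG$ contains a rank-$2$ subsystem of type $A_2$, $B_2$, $BC_2$, or $G_2$. The $A_2$ case yields $\SL_3(\integer)$ directly (Example~\ref{Qrank2eg}); in the remaining cases the relevant Tits subdiagram contains an $A_1$ or ${}^2\!A_2$ component whose associated $F$-subgroup, after restriction of scalars down to a subfield $F'\subseteq F$, gives one of the lattices in~\fullref{SL2Egs}{any} or~\ref{SU2Egs}. If $\Frank \aG = 1$, a case-by-case analysis of the admissible Tits indices combined with the real-rank-sum constraint reduces us to two principal families: $\aG = \aSL_1(D)$ for a quaternion $F$-algebra $D$ split at sufficiently many archimedean places (giving $\SL_2(\ints_{F'})$ after descending to the minimal subfield $F'$ over which $D$ becomes split), and $\aG = \aSU_3(L/F, f, \tau)$ for a Hermitian form of Witt index~$1$ (giving Example~\ref{SU2Egs}, with the jump from $\locrank = 1$ to $\locrank = 2$ at an archimedean place $v$ accounted for by $L \subset F_v$).

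In each of these cases I would construct an explicit $F'$-subgroup $\aH \subseteq \aG$, for an appropriate subfield $F' \subseteq F$, such that $\aH(\ints_{F'})$ embeds in $\Gamma$ up to finite index and matches one of the three example types. The construction must carefully track how the isotropy of $\aG$ at each archimedean place of $F$ transfers to the archimedean places of $F'$ under restriction of scalars, so as to keep the smaller lattice both nonuniform and of higher rank.

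The main obstacle is the classification bookkeeping in the $\Frank \aG = 1$ case, especially for outer forms of classical type and the exceptional groups of relative rank~$1$: each admissible Tits index must be either ruled out by the real-rank-sum constraint or shown to contain one of the two model subgroups above. A secondary subtlety is ensuring that the minimal subfield $F'$ has the precise form required by the statement---for~\ref{SU2Egs}, $F'$ must equal $\rational$ or an imaginary quadratic extension of $\rational$; for~\fullref{SL2Egs}{any}, $F'$ must be a number field all of whose proper subfields are $\rational$ or imaginary quadratic---and this may require further descent via Galois-theoretic arguments on the coefficient algebra of $\aG$.
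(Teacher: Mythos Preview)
Your opening reduction via Margulis Arithmeticity is correct and matches the paper: the theorem is equivalent to the statement that every isotropic almost simple $\rational$-group~$\aG$ with $\Rrank\aG\ge2$ contains an isotropic almost simple $\rational$-subgroup isogenous to one of the groups in Theorem~\ref{QgrpExplicitThm}. (The paper also invokes Superrigidity to make the translation from lattice inclusions to $\rational$-group inclusions precise.) But after that, your sketch diverges from what actually has to be done, and in places it is wrong.

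First, a concrete error. You write ``$\aG=\aSL_1(D)$ for a quaternion $F$-algebra~$D$'' as one of the two principal $\Frank=1$ families. If $D$ is a quaternion division algebra then $\aSL_1(D)$ is $F$-anisotropic, so this cannot occur with $\Frank\aG=1$; and if $D$ splits then $\aSL_1(D)\cong\aSL_2$, which has real rank~$1$ over~$F$, not~$\ge2$. The relevant rank-one special linear groups are $\aSL_2(D)$, and even there the point is not to descend to a subfield where $D$ splits, but to find a quadratic subfield $K\subset D$ with $K\subset F_v$ for all $v\in\SG$, so that $\res{K/F}\aSL_2$ sits inside $\aSL_2(D)$ as an isotropic subgroup of local rank~$\ge2$. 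More generally, the mechanism is never ``descent to a subfield $F'\subseteq F$''; it is the construction of an $F$-subgroup isogenous to $\res{K/F}\aSL_2$ (or to $\aSU_3$) for a carefully chosen \emph{extension}~$K$ of~$F$, produced via Weak Approximation so as to control the local ranks at every $v\in\SG$.

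Second, and more seriously, the sentence ``a case-by-case analysis of the admissible Tits indices \dots\ reduces us to two principal families'' hides essentially the entire paper. In the $\Frank=1$ case $\aG$ can be any of: $\aSL_2(D)$; an orthogonal group $\aSO_n(f)$ with $n\ge5$; a unitary group of the second kind $\aSU_n(D,f,\tau)$ over a quadratic extension; a unitary group of the first kind (symplectic or orthogonal type over a quaternion algebra); or an exceptional group of type $F_4$, $\out{3,6}D4$, or $\out2E_6$. In none of these cases does the Tits index alone hand you the subgroup; one must build it. For the classical groups this is done by diagonalizing the form, using Weak Approximation to arrange a coefficient that is a square at every $v\in\SG$, and recognizing $\aSO_4$ of the resulting four-variable form as $\res{F[\sqrt{a}]/F}\aSL_2$. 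For $F_4$ one passes to the centralizer of a carefully chosen rank-one torus in the anisotropic kernel to get a $C_3$ subgroup. For triality $D_4$ one exploits the constraint $N_{L/F}(b_1)=1$ on the anisotropic kernel to show that a specific $A_1^4$ standard subgroup is $F$-defined. For $\out2E_6$ there are two indices, handled separately via the structure of the anisotropic kernel (a $\aSpin_8$ in one case, an $\aSU_2$ over a cubic division algebra in the other). None of this is ``bookkeeping''; each case requires a genuinely different construction, and your proposal does not indicate how any of them would go.
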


\begin{rem}
Theorem~\ref{LattThm} is a fundamental ingredient in the proof \cite{LifschitzMorris-RO} that if all nonuniform lattices of higher rank are boundedly generated by unipotent elements, then no nonuniform lattice of higher rank can be right ordered.
\end{rem}

The Margulis Arithmeticity Theorem tells us that (modulo finite groups) any nonuniform lattice of higher rank can be realized as the integral points of a simple algebraic $\rational$-group. Also, the Margulis Superrigidity Theorem tells us that any embedding $\Gamma' \hookrightarrow \Gamma$ extends to an embedding of the corresponding algebraic $\rational$-groups (modulo finite groups). This means that the classification of almost-minimal nonuniform lattices of higher rank is logically equivalent to a result on simple algebraic $\rational$-groups.

\subsection{Simple algebraic $\rational$-groups} \label{QgroupSubsec}

Let $\aG$ be a connected algebraic group over~$\rational$ that is almost simple. (Recall that, by definition, this means every proper, normal $\rational$-subgroup of~$\aG$ is finite.) It is well known that if $\Qrank \aG \ge 2$, then $\aG$ contains a $\rational$-split almost simple subgroup~$\aH$, such that $\Qrank \aH = 2$. (Indeed, one may choose $\aH$ to be isogenous to either $\aSL_3$ or $\aSp_4$.) If we replace the assumption that $\aG$ has large $\Qrank$ with the weaker assumption that $\aG$ has large $\Rrank$, then one cannot expect to find a subgroup of large $\Rrank$ that is split over~$\rational$. (In any $\rational$-split subgroup, the $\rational$-rank and $\Rrank$ are equal.) However, our main result states that if we add the obvious necessary condition that $\aG$ is $\rational$-isotropic, then there is always a subgroup of large $\Rrank$ that is \emph{quasi}split over~$\rational$:

\begin{thm} \label{MainQuasisplit}
 Suppose\/ $\aG$ is an isotropic, almost simple algebraic group over\/~$\rational$, such that\/ $\Rrank \aG \ge 2$. Then\/ $\aG$ has a connected, isotropic, almost simple\/
$\rational$-subgroup\/~$\aH$, such that\/ $\aH$ is quasisplit over\/~$\rational$, and\/ $\Rrank \aH \ge 2$.
 \end{thm}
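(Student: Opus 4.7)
The proof divides naturally into two cases depending on $\Qrank\aG$. When $\Qrank\aG\ge 2$, the fact recalled in the paragraph preceding the theorem already supplies a $\rational$-split almost simple subgroup isogenous to $\aSL_3$ or $\aSp_4$; this is a fortiori quasisplit with $\Rrank=2$, so the conclusion is immediate. The substantive case is therefore $\Qrank\aG=1$ with $\Rrank\aG\ge 2$, to which the rest of the plan is devoted.

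The first step is to reduce to $\aG$ absolutely almost simple. Writing $\aG=\res{F/\rational}\aG_0$ for a suitable number field $F$ and absolutely simple $F$-group $\aG_0$, one has $\Rrank\aG=\sum_{v\mid\infty}\locrank\aG_0$, and the Weil restriction of a quasisplit $F$-subgroup of $\aG_0$ with adequate local-rank sum is a quasisplit $\rational$-subgroup of $\aG$ with $\Rrank\ge 2$. Thus it suffices to produce such an $F$-subgroup of $\aG_0$ for each absolutely simple form of $F$-rank $1$ appearing in the Tits classification.

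For the classical types I would construct $\aH$ explicitly by an index-reduction argument on the underlying algebraic datum. If $\aG=\aSU(f,L/F)$ is an outer form of type $A_n$ with Hermitian form $f$ of Witt index~$1$, restricting $f$ to a suitably chosen nondegenerate $3$-dimensional subspace of Witt index~$1$ yields $\aH\iso\aSU_3$, which is automatically quasisplit. Analogous constructions using quadratic subforms, anti-Hermitian subforms over a quaternion algebra, or quadratic subfields of a central division algebra should handle orthogonal types $B_n,D_n$, type $C_n$, and inner type~$A_n$ respectively. For the exceptional types $E_6,E_7,E_8,F_4,G_2$, a finer case analysis based on the allowed Tits indices of $F$-rank-$1$ groups will be required, and one must identify the appropriate quasisplit subgroup through the index combinatorics rather than through an obvious subform.

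The main obstacle at every step is to guarantee not merely that $\aH$ is quasisplit but also that $\Rrank\aH\ge 2$. This forces the subform, subalgebra, or subfield to be chosen compatibly with the real structure of~$\aG$, so that enough of the $\real$-rank of~$\aG$ is inherited by~$\aH$. The hypothesis $\Rrank\aG\ge 2$ provides the slack needed to make such a choice, but verifying the compatibility in each case — especially when the real form of~$\aG$ has small local rank at one or more archimedean places, and most delicately for the exceptional types where natural subgroups are in short supply — will be where the bulk of the technical work lies.
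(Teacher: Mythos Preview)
Your overall strategy matches the paper's: reduce to $\Qrank\aG=1$, then to absolutely almost simple, then run through the Tits classification case by case. The paper packages this case analysis as the classification Theorem~\ref{QgrpExplicitThm} (generalized to Theorem~\ref{MainForF}), then simply observes that every group listed there is quasisplit, whence Theorem~\ref{MainQuasisplit} follows at once.

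However, one of your specific constructions fails as stated. For an outer form $\aSU_n(L/F,f)$ of type~$A$ with $F=\rational$ and $L$ imaginary quadratic, restricting $f$ to a $3$-dimensional subspace of Witt index~$1$ does give a quasisplit $\aSU_3$, but over~$\real$ this is $\aSU(2,1)$, of real rank only~$1$; so the conclusion is not achieved. The paper's remedy (\S\ref{2ndKindSect}, Case~1) is different: it passes to the $\tau$-fixed field and produces an $\aSO_4$ of the shape $x_1^2-x_2^2-x_3^2+ax_4^2$, with the coefficient~$a$ chosen via Weak Approximation to be a square at every place in~$\SG$; then $\aSO_4\approx\res{F[\sqrt a]/F}\aSL_2$ is quasisplit with local rank~$2$ wherever required (Lemma~\ref{SO=SL_2(F[a])}). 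This Weak Approximation device is the engine behind all the classical cases and is precisely the ``compatibility with the real structure'' you are reaching for in your last paragraph, but it leads to $\res{K/F}\aSL_2$-type subgroups rather than the $\aSU_3$ you propose.

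For the exceptional types the paper does not find quasisplit subgroups in one step either: it first descends to a proper classical subgroup (type~$C_3$ inside~$F_4$, type $\out2A5$ or trialitarian $D_4$ inside~$E_6$) with the right local ranks, and then feeds that back into the classical analysis. So your hope that index combinatorics alone will identify the quasisplit subgroup directly is over-optimistic; an inductive descent through non-quasisplit intermediaries is what actually happens.
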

 
It was mentioned above that $\aH$ can be chosen to be isogenous to either $\aSL_3$ or $\aSp_4$ if $\Qrank \aG \ge 2$. So the theorem is only interesting when $\Qrank \aG = 1$. Because there are very few quasisplit groups of $\Qrank$~$1$ (and it is not difficult to find quasisplit proper subgroups of $\aSp_4$, as will be seen in the proof of Lemma~\ref{FrankMust1}), we can restate the result in the following more precise form. 
 
\begin{defn}
Suppose $\aG$ is an isotropic, almost simple algebraic group over\/~$\rational$, such that\/ 
$\Rrank \aG \ge 2$.
For convenience, let us say that $\aG$ is \emph{minimal} if no proper, isotropic, almost simple\/ $\rational$-subgroup of~$\aG$ has real rank $\ge 2$.
\end{defn}

\begin{notation}
$\res{K/F}$ denotes the Weil restriction of scalars functor from~$K$ to~$F$.
\end{notation}

 \begin{thm} \label{QgrpExplicitThm}
 Suppose\/ $\aG$ is an isotropic, almost simple algebraic group over\/~$\rational$, such that\/ 
$\Rrank \aG \ge 2$. If\/ $\aG$ is minimal, then\/ $\aG$ is isogenous to either:
\begin{enumerate} \renewcommand{\theenumi}{\roman{enumi}}
\item \label{QgrpExplicitThm-SL3}
$\aSL_3$,
or
\item \label{QgrpExplicitThm-SUreal}
 $\aSU_3(L,f,\tau)$, where $L$ is a real quadratic extension of\/~$\rational$, $\tau$~is the Galois automorphism of~$L$ over\/~$\rational$, and 
	\begin{equation} \label{QgrpExplicitThm-f}
	f(x_1,x_2,x_3) = \tau(x_1) \, x_1 - \tau(x_2) \, x_2 - \tau(x_3) \, x_3 
	, \end{equation}
or
\item \label{QgrpExplicitThm-SUcplx}
 $\res{K/\rational} \aSU_3(L,f,\tau)$, where $K$ is an imaginary quadratic extension of\/~$\rational$, $L$~is a quadratic extension of~$K$, $\tau$~is the Galois automorphism of~$L$ over~$K$, and $f$~is given by \pref{QgrpExplicitThm-f},
or
\item \label{QgrpExplicitThm-SL2}
 $\res{K/\rational} \aSL_2$, for some finite extension~$K$ of\/~$\rational$, such that $K$ is neither\/~$\rational$, nor an imaginary quadratic extension of\/~$\rational$.
\end{enumerate}
 \end{thm}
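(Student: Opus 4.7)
The plan is to first reduce to the quasisplit case via Theorem~\ref{MainQuasisplit}, then use the standard decomposition via restriction of scalars to pass to an absolutely almost simple group, and finally enumerate possibilities by $\Qrank\aG$ and the type of that absolutely almost simple factor.

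By Theorem~\ref{MainQuasisplit}, $\aG$ contains a connected, isotropic, almost simple, quasisplit $\rational$-subgroup $\aH$ with $\Rrank\aH\ge 2$. The minimality hypothesis forbids $\aH$ from being proper, so $\aG$ is itself quasisplit over $\rational$. By the standard structure theorem for almost simple $\rational$-groups, $\aG$ is then $\rational$-isogenous to $\res{K/\rational}\aG'$ for some number field $K$ and some absolutely almost simple, quasisplit $K$-group $\aG'$; moreover $\Qrank\aG=\fieldrank{K}\aG'$ and $\Rrank\aG=\sum_{v\mid\infty}\fieldrank{K_v}\aG'$.

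Suppose first that $\Qrank\aG\ge 2$. The fact quoted in \S\ref{QgroupSubsec} supplies a $\rational$-split almost simple $\rational$-subgroup of $\aG$ isogenous to $\aSL_3$ or $\aSp_4$; being $\rational$-split, its $\Rrank$ equals its $\Qrank=2$, so minimality forces it to coincide with $\aG$. In particular $\aG$ is absolutely simple, so $K=\rational$. Lemma~\ref{FrankMust1} exhibits a quasisplit proper $\rational$-subgroup of $\aSp_4$ of $\Rrank 2$, which rules out the $\aSp_4$ alternative, leaving case~(i), $\aG\sim\aSL_3$.

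Now suppose $\Qrank\aG=1$, so $\fieldrank{K}\aG'=1$. Inspection of Tits' classification of quasisplit absolutely almost simple $K$-groups of $K$-rank $1$ shows $\aG'$ is $K$-isogenous either to $\aSL_2$ or to a unitary group $\aSU_3(L,f,\tau)$ for some quadratic extension $L/K$ and a hermitian form $f$ of Witt index $1$, diagonalizable into the shape~\pref{QgrpExplicitThm-f}. In the $\aSL_2$ subcase, $\aG\sim\res{K/\rational}\aSL_2$, and the condition $\Rrank\aG\ge 2$ is exactly that the number $r+s$ of archimedean places of $K$ is $\ge 2$, i.e.\ that $K$ is neither $\rational$ nor imaginary quadratic---case~(iv). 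In the $\aSU_3$ subcase, the local rank of $\aG'$ at an archimedean place $v$ of $K$ is $2$ precisely when $L\otimes_K K_v$ splits (automatic at complex places) and is $1$ otherwise; combined with minimality---which, via the diagonal embedding $\res{F/\rational}\aG'_F\hookrightarrow\res{K/\rational}\aG'$ obtained whenever $\aG'$ descends to an intermediate subfield $F$ of $K$, supplies proper almost simple $\rational$-subgroups of large $\Rrank$---the possibilities reduce to either $K=\rational$ with $L$ a real quadratic extension (case~(ii)) or $K$ imaginary quadratic with $(L,f,\tau)$ not descending to $\rational$ (case~(iii)).

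The main technical obstacle is the last paragraph: the careful enumeration via Tits indices of quasisplit $K$-rank-$1$ absolutely almost simple groups (to rule out any exotic types beyond $\aSL_2$ and $\aSU_3$), and the systematic verification that every $(K,L)$ configuration outside the stated three shapes admits a proper, isotropic, almost simple $\rational$-subgroup of $\Rrank\ge 2$, produced either by Weil restriction from an intermediate subfield of $K$ or by descent of the hermitian datum $(L,f,\tau)$.
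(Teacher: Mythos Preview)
Your argument is circular. You invoke Theorem~\ref{MainQuasisplit} to conclude that a minimal~$\aG$ must itself be quasisplit, and then the classification becomes a short exercise. But in the paper's logical structure, Theorem~\ref{MainQuasisplit} is a \emph{corollary} of Theorem~\ref{QgrpExplicitThm}, not a lemma for it: see \S\ref{justify}, where it is stated that ``because all of the groups in the conclusion of~\pref{QgrpExplicitThm} are quasisplit, Theorem~\ref{MainQuasisplit} is an immediate consequence.'' There is no independent proof of Theorem~\ref{MainQuasisplit} in the paper, and producing one is essentially the whole content of the result you are trying to establish. So your first sentence assumes exactly what has to be proved.

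If one grants Theorem~\ref{MainQuasisplit}, the rest of your outline is sound and rather elegant. Your enumeration of quasisplit, absolutely almost simple groups of $K$-rank~$1$ is correct (only $\aSL_2$ and $\out2A2 = \aSU_3$ occur), and the subsequent case analysis can be completed---though your stated mechanism for cutting down~$K$ in the $\aSU_3$ case (``descent of~$\aG'$ to an intermediate subfield'') is not the right one. The correct argument is simpler: $\aSU_3(L,f,\tau)$ contains $\aSL_2$ over~$K$, so $\aG$ contains $\res{K/\rational}\aSL_2$; if $K$ is neither~$\rational$ nor imaginary quadratic, this subgroup already has $\Rrank\ge 2$, contradicting minimality.

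The paper's actual proof proceeds very differently. It does \emph{not} first reduce to the quasisplit case; instead it proves the more general Theorem~\ref{MainForF} directly, by reducing to $\Frank\aG=1$ and absolute almost-simplicity (Lemmas~\ref{FrankMust1} and~\ref{CanAbsSimple}) and then carrying out a type-by-type analysis: classical groups in \S\ref{ClassicalSect}, $F_4$ in \S\ref{F4Sect}, trialitarian $D_4$ in \S\ref{TrialitySect}, and $E_6$ in \S\ref{E6Sect}. In each case, explicit isotropic almost-simple subgroups of large local rank are constructed by hand, using Weak Approximation, the Hasse Principle, and careful manipulation of Hermitian forms and root data. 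That work is precisely what your appeal to Theorem~\ref{MainQuasisplit} is hiding.
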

 
 \begin{rem} \label{QgrpsAreMin}
 Conversely:
 \begin{enumerate} 
 \renewcommand{\theenumi}{\roman{enumi}}
 \renewcommand{\itemsep}{\smallskipamount}
 \item $\aSL_3$ is minimal.
 \item The groups described in \fullref{QgrpExplicitThm}{SUreal} are minimal.
 \item A group as described in \fullref{QgrpExplicitThm}{SUcplx} fails to be minimal if and only if $L$ contains a real quadratic extension of~$\rational$.
 \item A group as described in \fullref{QgrpExplicitThm}{SL2} fails to be minimal if and only if $K$ contains a proper subfield that is neither~$\rational$ nor an imaginary quadratic extension of~$\rational$.
 \end{enumerate}
 \end{rem}

\begin{rem}
Under the additional assumption that some minimal parabolic $\real$-subgroup of~$\aG$ is defined over~$\rational$, Theorem~\ref{QgrpExplicitThm} was proved long ago by G.\,A.\,Margulis \cite[Lem.~2.4.2]{[M75]} and M.\,S.\,Raghunathan \cite[Lem.~3.2(ii)]{[R86]} (independently). 
\end{rem}

\begin{rem}
Theorem~\ref{MainForF} provides a generalization of Theorem~\ref{QgrpExplicitThm} that applies to algebraic groups over any algebraic number field.
\end{rem}

\subsection*{Outline of the paper}
Section~\ref{justify} justifies statements made in the above introduction. 
The remaining sections of the paper state and prove our main result (Theorem~\ref{MainForF}).
Section~\ref{PrelimSect} covers some preliminaries, ands deals with groups that either have global rank $\ge 2$ or are of type $E_7$, $E_8$, or~$G_2$.
We treat
groups of classical type in \S\ref{ClassicalSect},
groups of type~$F_4$ in \S\ref{F4Sect},
groups of type  $\out{3,6}D4$ in~\S\ref{TrialitySect},
and
groups of type  $\out{1,2}E6$ in~\S\ref{E6Sect}.

\section{Justification of the introduction} \label{justify}

In this section, we provide brief justifications for the assertions made in the introduction. The order of the topics there was chosen for purposes of exposition; they will now be treated in reverse order (\S\ref{QgroupSubsec}, \S\ref{LatticeSubsec}, \S\ref{SymmspaceSubsec}).

\subsection*{Justification of \S\ref{QgroupSubsec}}
The observation that $\rational$-simple groups of higher $\rational$-rank contain subgroups isogenous to either $\aSL_3$ or~$\aSp_4$ appears in \cite[Prop.~I.1.6.2, p.~46]{MargulisBook}.

The following sections will present a proof of (a generalization of) Theorem~\ref{QgrpExplicitThm}. Because all of the groups in the conclusion of \pref{QgrpExplicitThm} are quasisplit, Theorem~\ref{MainQuasisplit} is an immediate consequence.

To verify the observations in Remark~\ref{QgrpsAreMin}, note that:
\begin{itemize} \renewcommand{\itemsep}{\smallskipamount}

\item The groups described in \fullref{QgrpExplicitThm}{SL3} and~\fullref{QgrpExplicitThm}{SUreal} are isomorphic to $\aSL_3$ over the algebraic closure~$\overline{\rational}$. Since $\aSL_3$ has no semisimple, proper subgroups of absolute rank $\ge 2$, it is immediate that these groups are minimal.

\item Let $\aG$ be one of the groups described in  \fullref{QgrpExplicitThm}{SUcplx}. 
If $L$ contains a real quadratic extension~$F$ of~$\rational$, then $\aG$ contains $\aSU_3(F,f,\tau|_F)$, so $\aG$ is not minimal.

Conversely, if $\aG$ is not minimal, then there is an isotropic, almost simple, proper $\rational$-subgroup~$\aH$ of~$\aG$, such that $\Rrank \aH \ge 2$. Since $\aG$ is isomorphic to $\aSL_3 \times \aSL_3$ over~$\overline{\rational}$, we know that $\aH$ must be isogenous to either $\aSL_3$ or $\aSL_2 \times \aSL_2$ over~$\overline{\rational}$. In either case, because $\Rrank \aH \ge 2$, there is a real quadratic extension~$F$ of~$\rational$, such that $\fieldrank{F} \aH = 2$. Therefore $\fieldrank{F} \aG \ge 2$.
If $F \not\subset L$, then $\tau$ extends to an automorphism of $L \cdot F$ that is trivial on $K \cdot F$, and $\aG$ is $F$-isomorphic to 
	$$\res{K \cdot F/F} \aSU_3(L \cdot F, f, \tau) .$$
So $\fieldrank{F} \aG = 1$. This is a contradiction, so we conclude that $L$ does contain the real quadratic extension~$F$.

\item Let $\aG$ be one of the groups described in  \fullref{QgrpExplicitThm}{SL2}. Any $\rational$-subgroup of~$\aG$ that is almost simple is isogenous to $\res{L/\rational} \aSL_2$, for some subfield~$L$ of~$K$. Thus, $\aG$ fails to be minimal if and only if $\Rrank \bigl(  \res{L/\rational} \aSL_2 \bigr) > 1$ for some proper subfield~$L$ of~$K$.
\end{itemize}

\subsection*{Justification of \S\ref{LatticeSubsec}}
The Margulis Arithmeticity Theorem \cite[Thm.~IX.1.16, p.~299, and Rem.~IX.1.6(iii), p.~294]{MargulisBook} states that (up to isomorphism of finite-index subgroups) the collection of nonuniform lattices of higher rank is the same as
	$$ \{\, \aG(\integer) \mid \text{$\aG$ is an isotropic almost simple $\rational$-group with $\Rrank \aG \ge 2$} \,\} .$$
For isotropic almost simple $\rational$-groups $\aG$ and~$\aG_1$ with $\Rrank \aG_1 \ge 2$,
the Margulis Superrigidity Theorem \cite[Thm.~IX.5.12(ii), p.~327, and Rem.~IX.1.6(iv), p.~295]{MargulisBook} implies there is a finite-index subgroup of $\aG_1(\integer)$ that is isomorphic to a subgroup of $\aG(\integer)$ if and only if $\aG_1$ is isogenous to a subgroup of $\aG$. Hence, $\aG(\integer)$ is almost minimal (as a nonuniform lattice of higher rank) if and only if $\aG$ is minimal (as an algebraic $\rational$-group). Therefore, all the assertions of \S\ref{LatticeSubsec} are simply translations of results in \S\ref{QgroupSubsec}. For example, because $\aSL_3$ is minimal, it is immediate that $\SL_3(\integer)$ is almost minimal.

\subsection*{Justification of \S\ref{SymmspaceSubsec}}
Let $X$ be as in Theorem~\ref{LocSymmThm}. It is well known (cf.\ \cite[\S2.2, pp.~70--71]{EberleinBook} and \cite[Thm.~5.6, p.~222]{HelgasonBook}) that, up to isometry, we have $X = \Gamma \backslash G/K$, where 
\begin{itemize}
\item $G$ is a connected, semisimple, adjoint Lie group with no compact factors,
\item $K$ is a maximal compact subgroup of~$G$,
and
\item $\Gamma$ is a (torsion-free) nonuniform, irreducible lattice in~$G$.
\end{itemize}
We have $\Rrank G = \rank X$ (cf.\ \cite[\S2.7, pp.~76--77]{EberleinBook}), so, since $\rank X \ge 2$, we see that $\Gamma$ is a nonuniform lattice of higher rank. Hence, Theorem~\ref{LattThm} implies that $\Gamma$ contains a subgroup~$\Gamma'$ that is isomorphic to a nonuniform, irreducible lattice in a connected, semisimple, adjoint Lie group~$H$, and $H$ is locally isomorphic to either $\SL_3(\real)$, $\SL_3(\complex)$, or a product $\SL_2(\real)^m \times \SL_2(\complex)$, with $m + n \ge 2$. The Margulis Superrigidity Theorem \cite[Thm.~IX.5.12, p.~327]{MargulisBook} implies that (after passing to a finite-index subgroup of~$\Gamma'$), the inclusion $\Gamma' \hookrightarrow \Gamma$ extends to an embedding $H \hookrightarrow G$, so we may assume $H \subset G$ and $\Gamma' = H \cap \Gamma$. We may choose a Cartan involution~$\sigma$ of~$G$, such that $\sigma(H) = H$ \cite[Thm.~7.3]{MostowSelfAdjoint}. Let
	\begin{itemize}
	\item $g_0 \in G$, such that $g_0 K g_0^{-1}$ is the maximal compact subgroup of~$G$ on which $\sigma$ is trivial \cite[Thm.~2.2(i), p.~256]{HelgasonBook}, 
	\item $K' = (g_0 K g_0^{-1}) \cap H$, so $K'$ is a maximal compact subgroup of~$H$,
	and
	\item $X' = \Gamma' \backslash H / K'$.
	\end{itemize}
Then $X'$ is a a finite-volume, noncompact, irreducible locally symmetric space whose universal cover is $H/K'$. The immersion
	$$ X' \to X \colon \Gamma' h K' \mapsto \Gamma h g_0 K $$
is proper \cite[Thm.~1.13, p.~23]{RaghunathanBook} and has totally geodesic image
\cite[Prop~2.6.2, p.~74]{EberleinBook}.

\section{Preliminaries} \label{PrelimSect}

Throughout the remainder of this paper, $\aG$ is a connected, isotropic, almost simple algebraic group over an algebraic number field~$F$. 

\begin{rem}
Our notation and terminology for discussing algebraic groups generally follows \cite{[PR]}.
However, we use boldface letters ($\aG$, $\aH$, $\aT$, etc.) to denote algebraic groups. Also, if $A$ is a central simple algebra, and $f$ is a Hermitian (or skew-Hermitian) form on~$A^m$, with respect to an involution~$\tau$, we use $\aSU_m(A,f,\tau)$ to denote the corresponding special unitary group, whereas \cite{[PR]} writes merely $\aSU_m(f)$. 
\end{rem}

\begin{notation}
Let $\SG$ be the set of all archimedean places~$v$ of~$F$, such that $\locrank \aG \ge 2$. 
\end{notation}

\begin{defn}
We say $\aG$ is \emph{minimal} if $\SG \neq \emptyset$, and there does not exist a proper, isotropic, almost simple $F$-subgroup~$\aH$ of~$\aG$, such that $\locrank \aH \ge 2$ for every $v \in \SG$.
\end{defn}

The following is a natural generalization of Theorem~\ref{QgrpExplicitThm}.

\begin{thm} \label{MainForF}
 Suppose\/ $\aG$ is an isotropic, almost simple algebraic group over an algebraic number field~$F$, such that $\SG \neq \emptyset$. If\/ $\aG$ is minimal, then\/ $\aG$ is isogenous to either:
\begin{enumerate} \renewcommand{\theenumi}{\roman{enumi}}
\item \label{MainForF-SL3}
$\aSL_3$,
or
\item \label{MainForF-SU}
 $\aSU_3(L,f,\tau)$, where 
 \begin{itemize}
 \item $L$ is a quadratic extension of~$F$, such that $L \subset F_v$, for some archimedean place~$v$ of~$F$,
 \item $\tau$~is the Galois automorphism of~$L$ over~$F$, 
 and 
\item $f(x_1,x_2,x_3) = \tau(x_1) \, x_1 - \tau(x_2) \, x_2 - \tau(x_3) \, x_3 $,
\end{itemize}
or
\item \label{MainForF-SUres}
 $\res{K/F} \aSU_3(L,f,\tau)$, where 
 \begin{itemize}
 \item $K$ is a quadratic extension of~$F$, such that $K \not\subset F_v$, for some archimedean place~$v$ of~$F$,
 \item $L$~is a quadratic extension of~$K$, 
 \item $\tau$~is the Galois automorphism of~$L$ over~$K$, 
 and 
 \item $f(x_1,x_2,x_3) = \tau(x_1) \, x_1 - \tau(x_2) \, x_2 - \tau(x_3) \, x_3 $,
 \end{itemize}
or
\item \label{MainForF-SL2}
 $\res{K/F} \aSL_2$, for some nontrivial finite extension~$K$ of~$F$, such that either $|K:F| > 2$, or $K \subset F_v$, for some archimedean place~$v$ of~$F$.
\end{enumerate}
 \end{thm}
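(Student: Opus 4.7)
The plan is to proceed by case analysis on the Tits type of $\aG$ over~$F$, following the outline already indicated at the end of the introduction. First I would dispose of the case $\Frank \aG \ge 2$: here $\aG$ contains an $F$-split subgroup isogenous to $\aSL_3$ or $\aSp_4$, and since $\aSp_4$ admits a proper quasisplit $F$-subgroup (of type $A_2$) whose local rank is still $\ge 2$ at each $v \in \SG$, minimality forces $\aG$ to be isogenous to $\aSL_3$, yielding conclusion~\pref{MainForF-SL3}. Thus I may assume $\Frank \aG \le 1$, so the Tits $F$-index of $\aG$ is anisotropic or has a single distinguished orbit of circled vertices.

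Next I would eliminate the exceptional types $G_2$, $E_7$, $E_8$. For each of these the list of possible isotropic Tits $F$-indices is short, and in every case one can locate a proper isotropic almost simple $F$-subgroup $\aH$ (typically the semisimple part of a Levi of a proper parabolic, or a classical subgroup of maximal rank obtained from an admissible subdiagram) with $\locrank \aH \ge 2$ for every $v \in \SG$, contradicting minimality. This rules out those types and is what Section~\ref{PrelimSect} is to accomplish.

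The bulk of the argument is a type-by-type analysis of the remaining cases. For classical types I would use the standard description of $\aG$ via a central simple algebra $A$ over a field $L$ of degree $1$ or~$2$ over~$F$, together with an involution and possibly a Hermitian form; minimality combined with $\SG \neq \emptyset$ then forces the underlying $A$-module to be of small rank and the form to have the shape given in \pref{MainForF-SU}, \pref{MainForF-SUres}, or \pref{MainForF-SL2}, with the required subgroups produced by restricting to hyperbolic subspaces or to subalgebras, and with $\res{K/F} \aSL_2$ appearing naturally from the centres of the anisotropic kernels of types $B$, $C$, $D$. For $F_4$ and outer $E_6$ I would exploit the Jordan/Albert-algebra description to exhibit proper isotropic $F$-subgroups of classical type (for example $B_3$ inside $F_4$, or a rank-one twist of $D_5$ inside $E_6$) that retain $\locrank \ge 2$ at each $v \in \SG$; for the trialitarian type $\out{3,6}D_4$ I would use the cubic \'etale algebra parameterizing the outer twist to argue that a minimal group of this type cannot occur.

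The hardest technical step will be producing, in each Tits-index case, a proper isotropic $F$-subgroup whose local rank is simultaneously $\ge 2$ at \emph{every} $v \in \SG$, rather than at a single place. This forces one to control both the Galois cohomology parameterizing the $F$-form and the behaviour of $\aG$ at several archimedean completions at once, and I expect that the trialitarian $D_4$ case and the outer $E_6$ case will require the most delicate Hasse-principle and Galois-cohomological input, since the subgroup one wishes to construct is itself only defined after a careful choice of intermediate field matching the places in~$\SG$.
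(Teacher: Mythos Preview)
Your high-level plan (reduce to $\Frank\aG=1$, then case-split by Tits type) matches the paper, but there are two concrete gaps and several points where your proposed constructions diverge from what the paper actually does.

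\medskip

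\textbf{The $\aSp_4$ step is wrong.} The root system $C_2$ contains no $A_2$ subsystem, so $\aSp_4$ has no standard subgroup of type~$A_2$; your claim that one can pass to a quasisplit $A_2$ inside $\aSp_4$ fails. The paper instead uses the isogeny $\aSp_4\approx\aSO_5$ and passes to $\aSO_4(x_1^2-x_2^2-x_3^2+ax_4^2)\approx\res{F[\sqrt{a}]/F}\aSL_2$, choosing $a$ by Weak Approximation to be a square in~$F_v$ for each $v\in\SG$ but not in~$F$. This is exactly Lemma~\ref{FrankMust1}.

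\medskip

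\textbf{The reduction to absolutely almost simple is missing.} You never explain how conclusion~\pref{MainForF-SUres} arises. In the paper this comes from Lemmas~\ref{AbsSimple} and~\ref{CanAbsSimple}: a minimal $\aG$ that is not absolutely almost simple is $\res{K/F}\aG_0$ with $K/F$ quadratic and $K\not\subset F_v$ for some $v\in\SG$, and then applying the absolutely-simple case over~$K$ to~$\aG_0$ gives~\pref{MainForF-SUres}. Without this reduction your case analysis never reaches~\pref{MainForF-SUres}.

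\medskip

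\textbf{Elimination of $G_2$, $E_7$, $E_8$.} Rather than producing subgroups, the paper observes (Corollary~\ref{MostExceptCor}) that the Tits classification over number fields admits \emph{no} $F$-rank~$1$ forms of these types; so once $\Frank\aG=1$ is established these types are excluded for free. Your proposed subgroup search is unnecessary.

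\medskip

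\textbf{Exceptional subgroups.} For $F_4$ the paper produces a $C_3$ (not $B_3$) subgroup as the centralizer of a carefully chosen anisotropic rank-one torus, split over a quadratic $L$ with $L\subset F_v$ for all $v\in\SG$. For $\out2E_6$ the paper does not use~$D_5$: in the $\out2E_{6,1}^{29}$ case it builds an $\out2A_5$ subgroup via the identification of the anisotropic kernel with $\aSpin_8(f)$, and in the $\out2E_{6,1}^{35}$ case it builds a trialitarian $D_4$ subgroup. For $\out{3,6}D_4$ the paper constructs $\res{K/F}\aSL_2$ with $|K:F|=4$ by an explicit Galois-orbit computation on the root system (Theorem~\ref{TrialityHasSL2}), which works over any field of characteristic $\neq2$ and needs no Hasse-principle input. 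Your intuition that the trialitarian and $E_6$ cases are the most delicate is correct, but the actual mechanisms used are rather different from the Jordan-algebra and Galois-cohomological arguments you sketch.
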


\begin{cor}
 Suppose\/ $\aG$ is an isotropic, almost simple algebraic group over an algebraic number field~$F$, such that $\SG \neq \emptyset$. Then $\aG$ contains an isotropic, almost simple $F$-subgroup~$\aH$, such that $\locrank \aH \ge 2$ for every $v \in \SG$, and $\aH$ is isogenous to a subgroup described in \pref{MainForF-SL3}, \pref{MainForF-SU}, \pref{MainForF-SUres}, or~\pref{MainForF-SL2} of Theorem~\ref{MainForF}.
\end{cor}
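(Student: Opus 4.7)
The plan is to extract $\aH$ as a minimal element in the collection
\[
\mathcal{C} = \bigl\{\, \aH' \subseteq \aG \,:\, \aH' \text{ is a connected, isotropic, almost simple } F\text{-subgroup with } \locrank \aH' \ge 2 \text{ for every } v \in \SG \,\bigr\}
\]
and then to check that such an $\aH$ satisfies the minimality hypothesis of Theorem~\ref{MainForF}, so that the classification there applies directly.

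First I would note that $\mathcal{C}$ is nonempty, since $\aG$ itself lies in $\mathcal{C}$ (the hypothesis $\SG \neq \emptyset$ is precisely what is needed). Next, because $\aG$ has finite dimension, any strictly descending chain of closed subgroups of $\aG$ must terminate; consequently $\mathcal{C}$ contains an element $\aH$ that is minimal with respect to inclusion.

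It remains to verify that $\aH$ is \emph{minimal} in the sense of the definition preceding Theorem~\ref{MainForF}. Set $S_\aH$ to be the set of archimedean places $v$ of $F$ with $\locrank \aH \ge 2$. By construction $\SG \subseteq S_\aH$, so in particular $S_\aH \neq \emptyset$. Suppose for contradiction that $\aH'$ is a proper, connected, isotropic, almost simple $F$-subgroup of $\aH$ with $\locrank \aH' \ge 2$ for every $v \in S_\aH$. Since $\SG \subseteq S_\aH$, this condition certainly holds for every $v \in \SG$, so $\aH' \in \mathcal{C}$, contradicting the minimality of $\aH$ in $\mathcal{C}$. Hence $\aH$ is minimal, and Theorem~\ref{MainForF} shows that $\aH$ is isogenous to one of the groups listed in \pref{MainForF-SL3}--\pref{MainForF-SL2}. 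The desired subgroup $\aH$ of $\aG$ has been produced.

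There is essentially no serious obstacle here; the only point worth spelling out is why the minimality of $\aH$ as an element of $\mathcal{C}$ implies minimality in the stronger sense required by Theorem~\ref{MainForF}, and this follows immediately from the inclusion $\SG \subseteq S_\aH$, which makes the rank condition defining $\mathcal{C}$ \emph{weaker} than the one appearing in the definition of ``minimal.''
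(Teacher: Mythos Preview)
Your argument is correct and is exactly the descent the paper has in mind (the paper states the corollary without proof, leaving this routine minimization to the reader). The one nontrivial point---that minimality in~$\mathcal{C}$ implies minimality in the sense needed for Theorem~\ref{MainForF} because $\SG \subseteq S_\aH$---is precisely what you have isolated, so nothing is missing.
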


The remainder of this paper provides a proof of Theorem~\ref{MainForF}.

\begin{notation}
For algebraic groups~$\aG_1$ and~$\aG_2$ over a field~$K$, we write $\aG_1 \approx \aG_2$ if they have the same simply connected covering.
\end{notation}

Let us record an observation that will be used repeatedly.

\begin{lem} \label{SO=SL_2(F[a])}
 If $a \in F^*$ and $\sqrt{a} \notin F$, then 
 	$$\aSO_4(x_1^2 - x_2^2 - x_3^2 + a x_4^2) \approx \res{F[\sqrt{a}]/F} \aSL_2 .$$
 \end{lem}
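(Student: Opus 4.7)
\medskip\noindent
\textbf{Proof plan.}
The goal is to identify the simply connected cover of $\aSO_4(q)$ with $\res{E/F}\aSL_2$, where $E=F[\sqrt{a}]$ and $q(x)=x_1^2-x_2^2-x_3^2+ax_4^2$. My plan is to give a concrete realization of $q$ as a determinant of a $2\times2$ Hermitian matrix over~$E$ and then read off the covering map from the natural action of $\aSL_2(E)$.

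First, I would introduce the involution $\sigma$ of $E/F$ and define the $F$-linear map $\Phi\colon F^4\to\Mat{2}(E)$ by
\[
 \Phi(x_1,x_2,x_3,x_4)=\begin{pmatrix} x_1+x_2 & x_3+x_4\sqrt{a}\\ x_3-x_4\sqrt{a} & x_1-x_2\end{pmatrix}.
\]
A direct expansion shows $\det\Phi(x)=x_1^2-x_2^2-x_3^2+ax_4^2=q(x)$. Moreover, the image of $\Phi$ is precisely the space $\aH_2(E,\sigma)$ of $\sigma$-Hermitian $2\times2$ matrices (the off-diagonal entries are $\sigma$-conjugate, the diagonal entries lie in~$F$). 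So $(F^4,q)$ is canonically isometric to $(\aH_2(E,\sigma),\det)$ as an $F$-quadratic space.

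Next, I would exploit the natural action of $\aSL_2(E)$ on $\aH_2(E,\sigma)$ given by $g\cdot M = g M g^{\ast}$, where $g^{\ast}=\sigma(g)^{T}$. This action preserves Hermitianity, and $\det(gMg^{\ast})=N_{E/F}(\det g)\cdot\det M=\det M$ since $\det g=1$. Passing to Weil restriction, this gives a homomorphism of $F$-algebraic groups
\[
 \varphi\colon \res{E/F}\aSL_2\longrightarrow \aSO_4(q).
\]
A short computation shows that $\ker\varphi$ consists of the matrices $g$ with $gMg^{\ast}=M$ for every Hermitian~$M$; taking $M=I$ forces $g^{\ast}=g^{-1}$, and then varying $M$ forces $g$ to be central, giving $\ker\varphi=\{\pm I\}$. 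Comparing dimensions ($\dim \res{E/F}\aSL_2=6=\dim\aSO_4$) shows $\varphi$ is surjective, so $\varphi$ is a central isogeny of degree~$2$.

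The main (and really only) thing to check carefully is that $\res{E/F}\aSL_2$ is simply connected, so that $\varphi$ is the simply connected covering of $\aSO_4(q)$. But $\aSL_2$ is simply connected over every field, and Weil restriction preserves simple connectedness \cite[\S2.1.2]{[PR]}. Hence $\aSO_4(q)\approx\res{E/F}\aSL_2$ in the sense of the notation introduced above. An alternative verification of the same fact, which I would mention for readers who prefer it, is via the even Clifford algebra: $q$ is isotropic with discriminant~$a$, so $C_0(q)$ has center $E=F[\sqrt{a}]$ and splits over $E$, giving $C_0(q)\cong\Mat{2}(E)$, from which $\aSpin(q)\cong\res{E/F}\aSL_1(C_0(q))=\res{E/F}\aSL_2$ by the standard description of the spin group.
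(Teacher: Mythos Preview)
Your argument is correct, but it is genuinely different from the paper's. The paper does not build an explicit isogeny at all: it simply observes that $\aSO_4$ is of type $D_2=A_1\times A_1$, that the nontrivial discriminant forces the form to be \emph{outer}, hence isogenous to $\res{F[\sqrt{a}]/F}\aSL_1(A)$ for some quaternion algebra~$A$ over $F[\sqrt{a}]$, and that $F$-isotropy forces $A$ to split, so $\aSL_1(A)\iso\aSL_2$. Your approach instead realizes the quadratic space concretely as $(\text{Hermitian }2\times 2\text{ matrices over }E,\det)$ and writes down the covering $\res{E/F}\aSL_2\to\aSO_4(q)$ by hand (with the Clifford-algebra variant as a backup). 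The paper's route is shorter and uses only the Tits-classification dictionary; yours is more self-contained and constructive, and has the pedagogical advantage that the isogeny is visible rather than inferred. Either is fine here, since the lemma is only used as a black box later in the paper.
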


\begin{proof} 
$\aSO_4$ is of type $D_2 = A_1\times A_1$. Since the
discriminant of the quadratic form under consideration is not a
square, we know that the associated orthogonal group is an outer
form. Thus, it is isogenous to $\res{F[\sqrt{a}]/F} \aSL_1(A)$, where
$A$ is a quaternion
algebra over $F[\sqrt{a}]$. Since the group is isotropic over $F$,
the algebra $A$ must be split, so $\aSL_1(A) \iso \aSL_2$.
 \end{proof}

Recall that a connected algebraic $F$-group is \emph{absolutely almost simple} if it remains simple over an algebraic closure~$\overline{F}$ of~$F$. The following basic observations allow us to assume that $\Frank \aG = 1$, and that $\aG$ is absolutely almost simple.

\begin{lem} \label{FrankMust1}
If\/ $\aG$ is minimal, then either:
\begin{enumerate}
\item \label{FrankMust1-Frank1}
 $\Frank \aG = 1$,
 or
\item \label{FrankMust1-SL3}
$\aG$ is isogenous to $\aSL_3$ {\rm(}so \fullref{MainForF}{SL3} holds\/{\rm)}.
\end{enumerate}
\end{lem}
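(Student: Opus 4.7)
The plan is to address the case $\Frank \aG \ge 2$ and force case \pref{FrankMust1-SL3}; case \pref{FrankMust1-Frank1} then holds in the remaining situation by default. A classical fact recalled in \S\ref{justify} from \cite[Prop.~I.1.6.2]{MargulisBook} guarantees that if $\Frank \aG \ge 2$, then $\aG$ contains an $F$-split almost $F$-simple $F$-subgroup $\aH$ with either $\aH \approx \aSL_3$ or $\aH \approx \aSp_4$. Because $\aH$ is $F$-split it stays $F_v$-split at every archimedean place, so the inclusion $\aH \subseteq \aG$ forces $\locrank \aG \ge 2$ at \emph{every} archimedean $v$; in particular, $\SG$ is the whole set of archimedean places of~$F$. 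This uniformity is what makes the local-rank comparisons below go through without case analysis on $v$.

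The $\aSL_3$ subcase is immediate: $\aH$ is an isotropic almost $F$-simple $F$-subgroup of $\aG$ with $\locrank \aH = 2$ throughout $\SG$, so minimality rules out $\aH \subsetneq \aG$, giving $\aG \approx \aSL_3$. The substance lies in ruling out $\aH \approx \aSp_4$, which I plan to do by exhibiting a proper almost $F$-simple $F$-subgroup $\aH'$ of $\aH$ with $\locrank \aH' \ge 2$ at every archimedean place, in contradiction to minimality. To this end, I would first pick $a \in F^*$ that is totally positive and not a square; such $a$ exists in any number field (for example, any rational prime large enough not to become a square in $F$). Using $B_2 = C_2$, identify $\aH$ (up to isogeny) with the spin group of a $5$-dimensional split quadratic form $q$ over~$F$. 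Since $q$ is isotropic it is universal, so Witt cancellation gives $q \iso \langle a \rangle \oplus q'$ for some $4$-dimensional form $q'$ of discriminant $a \pmod{(F^*)^2}$; since the Witt index of $q$ is $2$ and adjoining a $1$-dimensional summand can lower the Witt index by at most $1$, the form $q'$ has Witt index $\ge 1$ and is therefore isotropic. Lemma~\ref{SO=SL_2(F[a])} then yields
\[
	\aSO(q') \;\approx\; \res{F[\sqrt{a}]/F}\aSL_2,
\]
so the corresponding $F$-subgroup $\aH'$ of $\aH$ is proper and almost $F$-simple. At a complex place~$v$, $F_v[\sqrt{a}]$ is automatically $\complex \times \complex$; at a real place~$v$, total positivity of $a$ makes $F_v[\sqrt{a}] \iso \real \times \real$. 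In both cases $\aH'(F_v)$ is isogenous to $\aSL_2(F_v) \times \aSL_2(F_v)$, so $\locrank \aH' = 2$. This contradicts minimality of $\aG$ and closes the $\aSp_4$ subcase.

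The main obstacle is the simultaneous production of the two required properties of~$a$: non-squareness (without which the $4$-dimensional orthogonal group degenerates to $\aSL_2 \times \aSL_2$ and $\aH'$ fails to be almost $F$-simple) and total positivity (without which some real completion $F_v[\sqrt{a}]$ remains a field and the local rank drops to~$1$). The Witt-index bookkeeping that keeps $q'$ isotropic is the only other point requiring a moment's care; everything else is formal once $B_2 = C_2$ and Lemma~\ref{SO=SL_2(F[a])} are in hand.
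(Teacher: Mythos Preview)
Your argument is correct and follows essentially the same route as the paper's proof: reduce via \cite[Prop.~I.1.6.2]{MargulisBook} to $\aG \approx \aSL_3$ or $\aG \approx \aSp_4$, and in the $\aSp_4$ case use the identification $B_2 = C_2$ with $\aSO_5$ to locate a $4$-dimensional isotropic subform of non-square discriminant~$a$, whose orthogonal group is $\res{F[\sqrt{a}]/F}\aSL_2$ by Lemma~\ref{SO=SL_2(F[a])}. The only cosmetic differences are that the paper produces~$a$ by Weak Approximation (whereas you give an explicit totally positive non-square) and writes the $5$-dimensional form down explicitly so that the $4$-dimensional subform literally matches the statement of Lemma~\ref{SO=SL_2(F[a])}, rather than appealing to its proof for a general isotropic~$q'$ of discriminant~$a$.
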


\begin{proof}
Assume $\Frank \aG \ge 2$. It is well known that $\aG$ contains an $F$-subgroup that is isogenous to either $\aSL_3$ or $\aSp_4$ \cite[Prop.~I.1.6.2, p.~46]{MargulisBook}. By minimality, $\aG$ itself must be isogenous to either $\aSL_3$ or $\aSp_4$.

Suppose $\aG$ is isogenous to $\aSp_4$. Then $\aG$ is a split group of type $C_2 = B_2$, so it is also isogenous to
	$$ \aSO_5( x_1^2 - x_2^2 - x_3^2 + x_4^2 + a x_5^2) ,$$
for any $a \in F$.
It therefore contains a subgroup isogenous to
	$$ \aSO_4( x_1^2 - x_2^2 - x_3^2 + a x_5^2) .$$
By Weak Approximation, we may choose $a$ so that $a$ is a square in~$F_v$, for every $v \in \SG$, but $a$ is not a square in~$F$. Then $\aH$ is isogenous to $\res{F[\sqrt{a}]/F} \aSL_2$ \see{SO=SL_2(F[a])}, so it is isotropic and $\locrank \aH  = 2$ for every $v \in \SG$. This contradicts the minimality of~$\aG$.
\end{proof}

\begin{lem} \label{AbsSimple}
If\/ $\aG$ is minimal, then either:
\begin{enumerate}
\item \label{AbsSimple-SL}
 $\aG$ is isogenous to $\res{K/F} \aSL_2$, with $K$ as described in Theorem~\fullref{MainForF}{SL2},
or
\item \label{AbsSimple-Simple}
 $\aG$ is absolutely almost simple, 
or
\item \label{AbsSimple-Imag}
 $\aG$ is isogenous to $\res{K/F}\aG_0$, where $\aG_0$ is an absolutely almost simple group over a quadratic extension~$K$ of~$F$, such that $K \not\subset F_v$, for some $v \in \SG$.
\end{enumerate}
\end{lem}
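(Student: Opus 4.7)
My plan is to first invoke the standard structure theorem for almost simple $F$-groups (e.g.\ \cite[\S2.3.4 and \S6.1]{[PR]}), which gives $\aG \approx \res{K/F}\aG_0$ for an absolutely almost simple group $\aG_0$ over a finite extension $K$ of~$F$. If $K=F$ I am immediately in case~\pref{AbsSimple-Simple}, so from now on I would assume $K\ne F$. I would also use the identity $\Frank \aG = \fieldrank{K}\aG_0$, which forces $\aG_0$ to be $K$-isotropic.

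I would then split on the absolute type of $\aG_0$. Suppose first $\aG_0$ is of type~$A_1$, so $\aG\approx\res{K/F}\aSL_2$; only the condition on $K$ in Theorem~\fullref{MainForF}{SL2} needs checking. If $|K:F|>2$ there is nothing to do; otherwise $|K:F|=2$, and for any $v\in\SG$ the identity
	$$\locrank \aG = \sum_{w\mid v}\fieldrank{K_w}\aSL_2 = \#\{w:w\mid v\}\ge 2$$
forces $v$ to split in~$K$, i.e.\ $K\subset F_v$, giving~\pref{AbsSimple-SL}.

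The remaining case is $\aG_0$ of absolute rank~$\ge 2$, for which I aim at~\pref{AbsSimple-Imag}. The key device is to exhibit a single proper, almost simple $F$-subgroup of $\aG$ that contradicts minimality whenever \pref{AbsSimple-Imag} fails. Because $\aG_0$ is $K$-isotropic and $F$ has characteristic zero, Jacobson--Morozov provides a $K$-subgroup $\aH_0\subset\aG_0$ isogenous to~$\aSL_2$; since $\aG_0$ has absolute rank $\ge 2$, the subgroup $\aH := \res{K/F}\aH_0$ is proper, isotropic and almost simple over $F$, with $\locrank\aH = \#\{w:w\mid v\}$ at each archimedean~$v$.

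I would then finish by counting. A short check (at a complex $v$ this count is $|K:F|$; at a real $v$, writing $r+2s=|K:F|$ for the numbers of real and complex places of $K$ above $v$, one gets $r+s\ge 2$ exactly when $|K:F|\ge 3$) shows that $|K:F|\ge 3$ forces $\locrank\aH\ge 2$ at every archimedean $v$, contradicting minimality, and so $|K:F|=2$. If moreover $K\subset F_v$ for every $v\in\SG$, then every such $v$ splits in $K$, again giving $\locrank\aH=2$ and contradicting minimality. Hence some $v\in\SG$ must satisfy $K\not\subset F_v$, which is exactly~\pref{AbsSimple-Imag}. The only real obstacle is picking the candidate subgroup; once one observes that an $\aSL_2$ sub-restriction of scalars suffices in both forbidden regimes, the rest is bookkeeping.
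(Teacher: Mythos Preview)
Your proof is correct and uses the same key device as the paper: the subgroup $\aH \approx \res{K/F}\aSL_2$ coming from an $\aSL_2$ inside the isotropic group~$\aG_0$. The only organizational difference is that you split explicitly on whether $\aG_0$ has type~$A_1$, whereas the paper handles both possibilities at once by the dichotomy ``either $\locrank\aH\ge 2$ for all $v\in\SG$ (so minimality forces $\aG=\aH$, giving~\pref{AbsSimple-SL}), or not (forcing $|K:F|=2$ and $K\not\subset F_v$, giving~\pref{AbsSimple-Imag}).''
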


\begin{proof}
Assume \pref{AbsSimple-Simple} does not hold. Then there is an algebraic number field $K \supset F$, and an absolutely almost simple group~$\aG_0$ over~$K$, such that $\aG$ is isogenous to $\res{K/F} \aG_0$ \cite[Thm.~26.8, p.~365]{BookInvols}. Since $\aG$ is isotropic over~$F$, we know $\aG_0$ is isotropic over~$K$, so $\aG_0$ contains a subgroup that is isogenous to $\aSL_2$. Therefore, $\aG$ contains a subgroup~$\aH$ that is isogenous to $\res{K/F} \aSL_2$. 

If $\locrank \aH \ge 2$, for every $v \in \SG$, then the minimality of~$\aG$ implies $\aG = \aH$, so \pref{AbsSimple-SL} holds. On the other hand, if $\locrank \aH = 1$, for some $v \in \SG$, then $K$ is a quadratic extension of~$F$, and $K \not\subset F_v$, so \pref{AbsSimple-Imag} holds.
\end{proof}

\begin{lem} \label{CanAbsSimple}
If Theorem~\ref{MainForF} holds\/ {\rm(}for all algebraic number fields\/{\rm)} under the additional assumption that\/ $\aG$ is absolutely almost simple, then it holds in general.
\end{lem}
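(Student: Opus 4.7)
By Lemma~\ref{AbsSimple}, a minimal $\aG$ lies in one of three cases. Case~\pref{AbsSimple-SL} directly yields conclusion~\pref{MainForF-SL2}, and in case~\pref{AbsSimple-Simple} the group $\aG$ is absolutely almost simple, so the hypothesis applies. The substantive work lies in case~\pref{AbsSimple-Imag}: $\aG \approx \res{K/F}\aG_0$ with $\aG_0$ absolutely almost simple over a quadratic extension $K$ of~$F$, and $K \not\subset F_v$ for some $v \in \SG$. My plan is to apply the hypothesis to $\aG_0$ over~$K$ and translate the resulting classification back to~$\aG$.

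First, I check that $\aG_0$ satisfies the hypotheses of Theorem~\ref{MainForF}. The chosen $v$ with $K \not\subset F_v$ has a unique (complex) archimedean extension $w$ to~$K$, and $\aG(F_v) \cong \aG_0(K_w)$ yields $\fieldrank{K_w} \aG_0 = \locrank \aG \geq 2$, so $w \in S_{\aG_0}$. For minimality of $\aG_0$, I would take any proper almost $K$-simple isotropic $\aH_0 \subsetneq \aG_0$ with $\fieldrank{K_{w'}} \aH_0 \geq 2$ on all of $S_{\aG_0}$ and show that the proper $F$-subgroup $\res{K/F}\aH_0 \subsetneq \aG$ has local rank at least~$2$ at every $v' \in \SG$, contradicting the minimality of~$\aG$. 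This amounts to a case analysis on the archimedean places $w'\mid v'$ of~$K$: when there is a unique $w'\mid v'$ or when $v'$ is complex, the condition transfers directly from~$S_{\aG_0}$, but the subcase of a real $v'$ at which $K$ splits is delicate.

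Once $\aG_0$ is known to be minimal, the hypothesis places it in one of the four types in the statement of Theorem~\ref{MainForF}. Absolute almost simplicity of $\aG_0$ rules out the Weil-restriction types~\pref{MainForF-SUres} and~\pref{MainForF-SL2}, and the type $\aG_0 \approx \aSL_3$ is excluded because the diagonally embedded $F$-subgroup $\aSL_3 \subsetneq \res{K/F}\aSL_3 \approx \aG$ is proper, almost $F$-simple, isotropic, and has local rank~$2$ at every archimedean place of~$F$, contradicting the minimality of~$\aG$. Hence $\aG_0 \approx \aSU_3(L, f, \tau)$ with $L$ a quadratic extension of~$K$ and $f$ as in~\pref{QgrpExplicitThm-f}, and $\aG \approx \res{K/F}\aSU_3(L, f, \tau)$ has the form~\pref{MainForF-SUres}, as desired.

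The principal obstacle is the delicate subcase of the minimality verification, namely $v' \in \SG$ real, $K$ split at~$v'$, and both real places $w'_1, w'_2$ of~$K$ above~$v'$ satisfying $\fieldrank{K_{w'_i}} \aG_0 = 1$: then neither $w'_i$ lies in~$S_{\aG_0}$, so the hypothesized lower bound $\fieldrank{K_{w'}} \aH_0 \geq 2$ does not apply at the $w'_i$ and direct rank addition fails to give $\fieldrank{F_{v'}} \res{K/F}\aH_0 \geq 2$. Handling this configuration will require additional structure---for instance, the fact that excluding case~\pref{AbsSimple-SL} forces the absolute rank of $\aG_0$ to be at least~$2$ together with the constrained local form of $\aG_0$ at~$v'$---to ensure $\res{K/F}\aH_0$ still has local rank at least~$2$ at~$v'$.
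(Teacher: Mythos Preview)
Your overall strategy matches the paper's, but the ``principal obstacle'' you flag is not an obstacle at all, and worrying about it has led you to omit the one-line reason why the minimality of~$\aG_0$ really is, as the paper puts it, ``clear.'' The point you are missing is that your hypothetical $\aH_0$ is \emph{isotropic over~$K$}; a $K$-split torus remains split over every completion, so $\fieldrank{K_{w'}}\aH_0 \ge 1$ for \emph{every} archimedean place~$w'$ of~$K$, regardless of whether $w' \in S_{\aG_0}$. Hence in your ``delicate subcase'' (real $v' \in \SG$ split in~$K$, with $\fieldrank{K_{w'_i}}\aG_0 = 1$ for $i=1,2$) you still get
\[
\fieldrank{F_{v'}}\bigl(\res{K/F}\aH_0\bigr) \;=\; \fieldrank{K_{w'_1}}\aH_0 + \fieldrank{K_{w'_2}}\aH_0 \;\ge\; 1+1 \;=\; 2,
\]
and no extra structure is needed. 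So the minimality verification goes through in every case.

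A secondary remark: the paper streamlines the endgame by first invoking Lemma~\ref{FrankMust1} to get $\Frank\aG = 1$, hence $\fieldrank{K}\aG_0 = 1$. This single observation simultaneously rules out~\pref{MainForF-SL3} (which has $K$-rank~$2$) and, together with absolute almost simplicity, the restriction-of-scalars types~\pref{MainForF-SUres} and~\pref{MainForF-SL2}, leaving only~\pref{MainForF-SU}. Your diagonal-$\aSL_3$ argument is correct, but the paper's route via global rank is shorter and avoids constructing an auxiliary subgroup.
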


\begin{proof}
Suppose $\aG$ is minimal, but is not absolutely almost simple. From Lemma~\ref{FrankMust1}, we see that $\Frank \aG = 1$. We may assume \fullref{AbsSimple}{Imag} holds (for otherwise \fullref{MainForF}{SL2} holds). Since $\aG$ is minimal (as an $F$-group), it is clear that $\aG_0$ is minimal (as a $K$-group). The only absolutely almost simple group of global rank~$1$ in the conclusion of Theorem~\ref{MainForF} is in \fullref{MainForF}{SU}. Thus, we conclude that $\aG_0$ is as described in \fullref{MainForF}{SU}, but with $F$ replaced by~$K$. Then $\aG = \res{K/F} \aG_0$ is as described in \fullref{MainForF}{SUres}.
\end{proof}

Lemma~\ref{FrankMust1} immediately rules out some types of exceptional groups:

\begin{cor} \label{MostExceptCor}
If\/ $\aG$ is minimal, then\/ $\aG$ is not of type $E_7$, $E_8$, or~$G_2$.
\end{cor}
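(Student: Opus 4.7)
The approach is to combine Lemma~\ref{FrankMust1} with Tits' classification of isotropic forms. Suppose for contradiction that $\aG$ is minimal and of type $E_7$, $E_8$, or $G_2$. Since none of these types coincides with $A_2$, Lemma~\ref{FrankMust1} forces $\Frank \aG = 1$: the alternative $\aG \approx \aSL_3$ is ruled out by inspection of the root system. The remaining task is to derive a contradiction from the existence of a minimal $F$-form of these types having $\Frank \aG = 1$.

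The $G_2$ case is immediate. Inspection of Tits' tables (see, e.g., the classification of admissible indices in \cite{[PR]}) shows that the Dynkin diagram of $G_2$ admits only two compatible relative indices: the fully anisotropic one, with $\Frank = 0$, and the split one, with $\Frank = 2$. Thus an isotropic $G_2$ is necessarily split, contradicting $\Frank \aG = 1$.

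For $E_7$ and $E_8$ one proceeds similarly by consulting the classification: the only admissible $F$-indices with $\Frank = 1$ are $^{1\!}E_{7,1}^{78}$ (anisotropic kernel of type $E_6$) and $E_{8,1}^{133}$ (anisotropic kernel of type $E_7$). In each case one must exhibit a proper isotropic, almost simple $F$-subgroup $\aH$ with $\locrank \aH \ge 2$ for every $v\in \SG$, thereby contradicting minimality. A natural candidate, since $\Frank \aH \le \Frank \aG = 1$, is a Weil restriction $\aH = \res{K/F}\aSL_2$ attached to a suitable quadratic extension $K/F$ with $K\subset F_v$ at some $v\in \SG$ (ensuring $\locrank \aH = 2$ there), using weak approximation to choose $K$ so that the relevant archimedean embedding conditions are met. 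Such an $\aH$ sits inside $\aG$ through an appropriate $A_1\times A_1$ sub-root system of $E_7$ or $E_8$, twisted by a cocycle reflecting $K$.

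The main obstacle is the explicit construction of $\aH$ in the $E_7$ and $E_8$ cases: one needs to control simultaneously the $F$-form of the ambient group (so that the embedding descends from $\overline{F}$ to $F$) and the local behavior of $\aH$ at every place $v\in \SG$. This is where the analysis of the Tits index and the Galois cocycle defining $\aG$ becomes essential; if this direct construction proves unwieldy, an alternative would be to invoke the Hasse-principle for simply connected groups (Kneser--Harder--Chernousov) to show that the global index $E_{8,1}^{133}$ (resp.\ $^{1\!}E_{7,1}^{78}$) is itself inconsistent with $\SG\neq\emptyset$, since the local Tits indices at real places of $E_7$ (ranks in $\{0,3,4,7\}$) and $E_8$ (ranks in $\{0,4,8\}$) are too constrained to be compatible with a globally rank-$1$ configuration and the minimality requirement.
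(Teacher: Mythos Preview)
Your argument for $G_2$ is fine and matches the paper's. The problem is with $E_7$ and $E_8$: you have made the proof far harder than it needs to be, and in doing so left it incomplete.

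The paper's proof is a single line: the Tits classification over number fields (\cite[pp.~59--61]{[T66]}) shows that there are \emph{no} forms of $E_7$, $E_8$, or $G_2$ with $\Frank = 1$ over a number field. Combined with Lemma~\ref{FrankMust1}, that finishes the corollary immediately. The crucial point you missed is that the restriction to number fields already kills the rank-$1$ indices for $E_7$ and $E_8$; you never need to find a proper subgroup~$\aH$.

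Your proposal instead assumes that rank-$1$ forms such as $^{1\!}E_{7,1}^{78}$ and $E_{8,1}^{133}$ do occur over~$F$, and then tries to contradict minimality by constructing a suitable $\res{K/F}\aSL_2$ inside them. That construction is never carried out (you yourself call it ``unwieldy''), and your enumeration of the possible rank-$1$ indices is incomplete in any case (you omit $E_{7,1}^{66}$ and $E_{8,1}^{91}$, for instance). The ``alternative'' you sketch at the end---using the Hasse principle together with the possible local ranks---is in spirit exactly what underlies the one-line citation to Tits, but you state it only as a vague fallback and do not actually verify that the local constraints force $\Frank \neq 1$. So as written, the $E_7$ and $E_8$ cases are not proved.

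In short: replace the entire $E_7$/$E_8$ discussion with the observation that, over a number field, Tits' tables list no index of $F$-rank~$1$ for these types.
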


\begin{proof}
The Tits Classification \cite[pp.~59--61]{[T66]} shows there are no rank~1 forms of any of these types over a number field.
\end{proof}

The following useful observation is well known, and easy to prove.

\begin{lem} \label{NormalizeHerm}
Let
\begin{itemize}
\item $D$ be a quaternion algebra over a field~$L$,
\item $\tau$ be an involution of~$D$ {\rm(}of either the first or second kind{\rm)},
\item $f(x,y) = \tau(x_1) \, a_1 \, y_1 + \tau(x_2) \, a_2 \, y_2 + \cdots + \tau(x_n) \, a_n \, y_n$ be a nondegenerate $\tau$-Hermitian form on~$D^n$, for some~$n$,
\item $d \in D$, such that $\tau(d) = d$,
\item $\tau' = \int(d) \circ \tau$, where $\int(d)$ is the inner conjugation in~$D$ by~$d$,
and
\item $f'(x,y) = d \, f(x,y) = \tau'(x_1) \, d a_1 \, y_1 + \tau'(x_2) \, d a_2 \, y_2 + \cdots + \tau'(x_n) \, d a_n \, y_n$.
\end{itemize}
Then:
\begin{enumerate}
\item $\tau'$ is an involution {\rm(}of the same kind as~$\tau${\rm)},
\item $f'$ is $\tau'$-Hermitian,
and
\item $\aSU_n(D, f', \tau') = \aSU_n(D, f, \tau)$.
\end{enumerate}
\end{lem}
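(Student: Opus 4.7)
The plan is to verify the three conclusions by direct calculation, using only that $\tau$ is an anti-automorphism fixing~$d$ and that $f$ is $\tau$-Hermitian. No clever idea is needed; the main point is simply to keep the order of multiplication straight (since $D$ is noncommutative) and to exploit the identity $\tau(d)=d$ (hence also $\tau(d^{-1})=d^{-1}$) at the right moments.

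First, for part~(1), I would check that $\tau'=\int(d)\circ\tau$ is an anti-automorphism of~$D$: the calculation $\tau'(xy)=d\,\tau(y)\tau(x)\,d^{-1}=\bigl(d\,\tau(y)\,d^{-1}\bigr)\bigl(d\,\tau(x)\,d^{-1}\bigr)=\tau'(y)\tau'(x)$ is immediate, and involutivity follows from
\[
\tau'\bigl(\tau'(x)\bigr)=d\,\tau\bigl(d\,\tau(x)\,d^{-1}\bigr)\,d^{-1}=d\,\tau(d^{-1})\,x\,\tau(d)\,d^{-1}=d\,d^{-1}\,x\,d\,d^{-1}=x,
\]
using $\tau(d)=d$. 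Since $\int(d)$ acts trivially on the centre of~$D$, $\tau'$ restricts to~$L$ in the same way as~$\tau$, so $\tau'$ is of the same kind as~$\tau$.

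Next, for part~(2), one direction I need is sesquilinearity with respect to~$\tau'$, which follows from
\[
f'(x\alpha,y)=d\,\tau(\alpha)\,f(x,y)=d\,\tau(\alpha)\,d^{-1}\bigl(d\,f(x,y)\bigr)=\tau'(\alpha)\,f'(x,y),
\]
while linearity in the second slot is inherited from~$f$. For the Hermitian symmetry,
\[
\tau'\bigl(f'(x,y)\bigr)=d\,\tau\bigl(d\,f(x,y)\bigr)\,d^{-1}=d\,\tau(f(x,y))\,\tau(d)\,d^{-1}=d\,f(y,x)=f'(y,x),
\]
where I used $\tau(d)d^{-1}=1$ and the $\tau$-Hermitian property of~$f$. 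Nondegeneracy of $f'$ is clear because $d$ is invertible, and the diagonalization in the statement is just the definition $f'=d\cdot f$ applied term-by-term.

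Finally, for part~(3), the identity $f'(Ax,Ay)=d\,f(Ax,Ay)$ shows that $f'(Ax,Ay)=f'(x,y)$ if and only if $f(Ax,Ay)=f(x,y)$, since $d$ is a unit; and the underlying ambient group $\SL_n(D)$ (or $\GL_n(D)$) is the same in either description, so the special unitary group schemes coincide pointwise, and hence as $F$-group schemes. The only thing to keep an eye on, and the closest thing to a ``main obstacle,'' is making sure the noncommutative computation in part~(2) is handled in the right order and that $\tau$ is treated as an anti-involution throughout; once that bookkeeping is done correctly the lemma follows.
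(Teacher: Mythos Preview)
Your verification is correct. The paper does not actually give a proof of this lemma: it simply remarks that the observation is well known and easy to prove, and states the result. Your direct computation is exactly the routine check the authors have in mind, so there is nothing to compare.
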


\begin{defn}[{\cite[\S2.2, p.~69]{[T66]}}]
Recall that if $\aS$ is a maximal $F$-split torus in~$\aG$,
then the semisimple part of the centralizer $\aC_\aG(\aS)$ is called the
\emph{semisimple $F$-anisotropic kernel} of~$\aG$. It is unique up
to $F$-isomorphism.
\end{defn}

\begin{defn} 
A connected, semisimple subgroup~$\aH_0$ of~$\aG$ is 
 \emph{standard} if $\aH_0$ is normalized by a maximal torus~$\aT$
 of~$\aG$. (We remark that neither~$\aH_0$ nor~$\aT$ is assumed
 to be defined over~$F$.) Equivalently, there exist
 roots $\beta_1,\ldots,\beta_r$ of~$\aG$ (with respect to~$\aT$),
 such that $\aH_0$ is generated by the root subgroups 
 $U_{\pm \beta_1}, \ldots, U_{\pm \beta_r}$. 
 For short, we may say that $\aH_0$ is \emph{generated by the 
 roots $\pm\beta_1,\ldots,\pm\beta_r$}.
 \end{defn}

The following useful observation is well known (cf.\ \cite[pp.~353]{[PR]}).

\begin{prop} \label{Malpha}
Let
\begin{itemize}
\item $\aM$ be an anisotropic, semisimple group over~$F$, such that $-1$ is in the Weyl group of\/~$\aM$,
\item $L$ be a quadratic extension of~$F$, such that\/ $\aM$ is quasisplit over~$L$,
and
\item $\alpha$ be a simple root of\/~$\aM$ that is fixed in the $*$-action shown in the Tits index of\/~$\aM$. 
\end{itemize}
Then there is a maximal $F$-torus\/ $\aT$ of\/~$\aM$, such that the standard subgroup\/~$\aM_{\alpha}$ generated by the roots $\pm \alpha$ is defined over~$F$.

Furthermore, if\/ $\aM$ is split over~$L$, then\/ $\aT$ may be chosen to be split over~$L$.
\end{prop}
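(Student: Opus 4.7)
The plan is to construct $\aT$ by twisting a suitable $L$-quasisplit maximal torus $\aT_L$ of $\aM$ using the longest element of the Weyl group, which equals $-\mathrm{id}$ under our hypothesis.

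First, because $\aM$ is quasisplit over $L$, fix an $L$-defined Borel $\aB_L$ and a maximal $L$-torus $\aT_L\subset\aB_L$; in the additional split case, choose $\aT_L$ to be $L$-split. Let $\Delta\subset\Phi:=\Phi(\aM_L,\aT_L)$ be the corresponding simple roots. Writing $\sigma$ for the nontrivial element of $\Gal(L/F)$, one may replace $(\aB_L,\aT_L)$ by an $\aB_L(L)$-conjugate so that $\sigma$ preserves this pair; the induced permutation of $\Delta$ is the $*$-action, which by hypothesis fixes~$\alpha$.

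Let $w_0$ denote the longest element of $W:=W(\aM_L,\aT_L)$. The hypothesis $-1\in W$ is equivalent to $w_0=-\mathrm{id}$ on $X^*(\aT_L)$, whence $w_0(\alpha)=-\alpha$; moreover $w_0$ is central in $W$, so it commutes with the $*$-action. Lift $w_0$ to $n\in N_\aM(\aT_L)(L)$. Assigning $n$ to $\sigma$ defines a Weyl cocycle for $\Gal(L/F)$ (the cocycle condition $n\cdot\sigma(n)\equiv 1\pmod{\aT_L}$ holds because $w_0$ is $\sigma^*$-fixed and $w_0^2=1$), and twisting $\aT_L$ by this cocycle produces a maximal $F$-torus $\aT$ of $\aM$; concretely, choose $g\in\aM(\bar F)$ with $g^{-1}\sigma(g)\in n\cdot\aT_L(\bar F)$ (existence is a Hilbert~90/Steinberg-type vanishing statement, applied on the simply connected cover) and set $\aT:=g\aT_L g^{-1}$. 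Under the natural identification $X^*(\aT)\cong X^*(\aT_L)$, the $\sigma$-action on $X^*(\aT)$ becomes $w_0\circ\sigma^*$.

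With this $\aT$ in hand, $\sigma\cdot\alpha=w_0(\sigma^*\alpha)=w_0(\alpha)=-\alpha$, so $\{\pm\alpha\}$ is $\Gal(\bar F/F)$-stable inside $\Phi(\aM_{\bar F},\aT_{\bar F})$; hence the standard subgroup $\aM_\alpha$ generated by $U_{\pm\alpha}$ is defined over~$F$. If $\aM$ is $L$-split, then $\sigma^*$ is trivial on $\Delta$ and $\Gal(\bar L/L)$ acts trivially on $X^*(\aT_L)$; since the twist only modifies the $\Gal(L/F)$-component, $\Gal(\bar L/L)$ still acts trivially on $X^*(\aT)$, and hence $\aT$ is $L$-split, proving the furthermore.

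The main technical obstacle is producing $\aT$ as a genuine $F$-subgroup of $\aM$, not merely as an abstract $F$-torus. The hypothesis $-1\in W$ is essential here: twisting by $w_0=-\mathrm{id}$ yields $X^*(\aT)^\sigma=0$, so $\aT$ is $F$-anisotropic, which is consistent with (in fact forced by) $\aM$ being $F$-anisotropic; twisting instead by the reflection $s_\alpha$ would still achieve $\sigma\cdot\alpha=-\alpha$ but leave a codimension-one $\sigma$-fixed subspace of $X^*$, producing a non-anisotropic torus that could not sit inside the $F$-anisotropic group~$\aM$.
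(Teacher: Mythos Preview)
Your construction has a genuine gap at the very first step. You claim that, after conjugating by an element of $\aB_L(L)$, the Galois automorphism~$\sigma$ can be made to preserve the pair $(\aB_L,\aT_L)$. But if $\sigma(\aB_L)=\aB_L$, then $\aB_L$ is stable under all of $\Gal(\overline F/F)$ (it is already defined over~$L$), hence defined over~$F$ by descent; this would force $\aM$ to be quasisplit over~$F$, contradicting the hypothesis that $\aM$ is $F$-anisotropic. What is true is only that $\sigma(\aB_L)$ is \emph{conjugate} to~$\aB_L$, and the $*$-action records the permutation of~$\Delta$ obtained after transporting back --- it is not the literal action of~$\sigma$ on roots with respect to any $F$-torus inside~$\aM$. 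A second gap: even granting a cocycle $\sigma\mapsto n$, finding $g\in\aM(\overline F)$ with $g^{-1}\sigma(g)\in n\,\aT_L(\overline F)$ amounts to asking that its class die in $H^1(F,\aM)$. Steinberg's theorem gives such vanishing only over perfect fields of cohomological dimension~$\le 1$; over a number field, $H^1(F,\aM^{\mathrm{sc}})$ is controlled by the real places and is typically nonzero for anisotropic groups, so the ``Hilbert~90/Steinberg-type'' appeal does not go through.

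The paper sidesteps both issues by quoting \cite[Lem.~6.17, p.~329]{[PR]}: there is a Borel $L$-subgroup~$\aB$ of~$\aM$ such that $\aT:=\aB\cap\sigma(\aB)$ is already a maximal $F$-torus of~$\aM$. Here $\sigma(\aB)$ is the Borel \emph{opposite} to~$\aB$ through~$\aT$, so the $\sigma$-action on the roots of~$(\aM,\aT)$ sends positives to negatives. Combined with $\alpha$ being $*$-fixed and $-1\in W$, one gets $\tau(\alpha)\in\{\pm\alpha\}$ for every $\tau\in\Gal(\overline F/F)$, whence $\aM_\alpha$ is defined over~$F$. In other words, the torus the paper produces has exactly the Galois action $w_0\circ(*\text{-action})$ that your twist was meant to manufacture --- but it is obtained directly, without having to trivialize any cocycle in~$\aM$.
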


\begin{proof}
Letting $\sigma$ be
the Galois automorphism of~$L$ over~$F$, there is a Borel
$L$-subgroup~$\aB$ of~$\aM$, such that $\aT = \aB \cap \sigma(\aB)$ is a
maximal torus of~$\aM$ \cite[Lem.~6.17, p.~329]{[PR]}. 
The Borel subgroup~$\aB$ determines an ordering of the roots of~$\aM$
 (with respect to~$\aT$). Note that the negative roots are precisely those
 that appear in $\sigma(\aB)$.
 
Let $K$ be a Galois splitting field of~$\aM$ that contains~$L$.
Since $\aT$ is defined over~$F$, the Galois group $\Gal(K/F)$
permutes the root spaces of~$\aM$. Furthermore, for any $\tau \in \Gal(K/F)$,
either $\tau$ sends every positive root to a positive root (if $\tau(\aB) = \aB$),
or $\tau$ sends every positive root to a negative root (if $\tau(\aB) = \sigma(\aB)$).
Since $\alpha$ is fixed in the 
$*$-action shown in the Tits index, and $-1$~belongs to the
Weyl group, 
 this implies that $\tau(\alpha) = \pm \alpha$. 
Therefore, $\aM_{\alpha}$  is stable 
under $\Gal(K/F)$; thus, it is defined over~$F$.
\end{proof}

It is easy to tell whether a standard subgroup of a simply connected group is simply connected:

\begin{rem}[{}{\cite[(II.5.3), p.~206]{SpringerSteinberg}}] \label{WhichSC}
Let $\aG$ be a simply connected, semisimple $F$-group, and let $\aH$ be the standard, semisimple subgroup of~$\aG$ generated by the roots $\pm\beta_1,\ldots,\pm\beta_r$. Then $\aH$ is simply connected if and only if the set of roots of~$\aH$ contains every long root of~$\aG$ that is in the $\rational$-span of $\{\beta_1,\ldots,\beta_r\}$.
\end{rem}

\section{Groups of classical type} \label{ClassicalSect}

\begin{assump}
We assume in this section that $\aG$ is a group of classical type, and that $\aG$ is minimal. Furthermore, with Lemmas~\ref{FrankMust1} and~\ref{CanAbsSimple} in mind, we assume that $\Frank \aG = 1$ and that $\aG$ is absolutely almost simple.
\end{assump}

We know $\aG \neq \aSp_n$ (because $\aSp_n$ is $F$-split, but $\Frank \aG = 1 < \locrank \aG$ for any $v \in \SG$).
Thus, $\aG$ is either a special linear group, an orthogonal group, or a unitary group of either the first or second kind \cite[\S2.3.4]{[PR]}.
We consider each of these possibilities separately.

\subsection{Special Linear Groups}
 \label{SLSect}

\begin{assumps} \ 
 \begin{itemize}
 \item $D$ is a central division algebra over~$F$,
 and
 \item $\aG = \aSL_2(D)$.
 \end{itemize}
 \end{assumps}

Let $K$ be a maximal subfield of~$D$. For $v \in \SG$, we have  
$$\locrank \aG > 1 = \locrank \aSL_2 .$$
Therefore $D \neq F$, so $K$ is a proper extension of~$F$.  Because $\res{K/F} \aSL_2 \subseteq \aG$, the minimality of~$\aG$ implies there exists $w \in \SG$, such that $\fieldrank{F_w} ( \res{K/F} \aSL_2 ) = 1$. Therefore $|K:F| = 2$, so $D$ is a quaternion algebra over~$F$.

Write $D = (a,b)_F$. 
By Weak Approximation, there exist $c_1,c_2,c_3 \in F$, such that
	\begin{equation} \label{SLSect-allpos}
	 \text{for every $v \in \SG$, $a c_1^2 + b c_2^2 -ab c_3^2$ is a nonzero square in~$F_v$.} 
	 \end{equation}
Let $c = a c_1^2 + b c_2^2 - ab c_3^2 \in F$, so $c$ has a square root in~$D$.
Thus, letting $\aH = \res{F[ \sqrt{c}]/F} \aSL_2$, we have 
$\aH \subseteq \aG$.
Also, for every $v \in \SG$, we know $c$ is a square in~$F_v$ \see{SLSect-allpos}, so $\locrank \aH \ge 2$. 
This contradicts the minimality of~$\aG$.

\subsection{Orthogonal groups}
\label{orthogonal}

\begin{assumps} \ 
\begin{itemize}
\item $f$ is a nondegenerate quadratic form on~$F^n$, for some~$n \ge 5$,
\item $\aG = \aSO_n(f)$,
and
\item the maximal totally isotropic $F$-subspace of~$F^n$ is $1$-dimensional
(in other words, $\Frank \aG = 1$).
\end{itemize}
\end{assumps}

 After a change of basis, to diagonalize the form, we may write
 $$ f(x) = x_1^2 - x_2^2 + a_3 x_3^2 + a_4 x_4^2
  + \cdots + a_n x_n^2  .$$
 (We may assume the form begins with $x_1^2 - x_2^2$, because it is isotropic.)
By normalizing the form, we may assume $a_3 = -1$. 
By Weak Approximation, there exist $b_4,b_5,\ldots,b_n \in F$, such that
	\begin{equation} \label{orthogonal-allpos}
	 \text{for every $v \in \SG$, $a_4 b_4^2 + \cdots + a_n b_n^2$ is a nonzero square in~$F_v$.} 
	 \end{equation}
Let $a = a_4 b_4^2 + \cdots + a_n b_n^2$, so,
after a change of basis, we may assume $a_4 = a$. Then
	$$ \aH =  \aSO_4(x_1^2 - x_2^2 - x_3^2 + a x_4^2) \subset \aG. $$
For any $v \in \SG$, we know
	$\locrank \aH = 2$
(since $a$ is a square in~$F_v$). Hence, the minimality of~$\aG$ implies 
	$$\aG = \aH \approx \res{F[\sqrt{a}]/F} \aSL_2 . $$
So \fullref{MainForF}{SL2} holds.

\subsection{Unitary groups of the second kind}
 \label{2ndKindSect}

\begin{assumps} \ 
 \begin{itemize}
 \item $L$ is a quadratic extension of~$F$,
 \item $D$ is a central division algebra over~$L$,
 \item $\tau$ is an involution of~$D$ that fixes every element of~$F$, but fixes no other elements of~$L$,
 \item $f$ is a $\tau$-Hermitian form on $D^n$, for some~$n$,
 \item $\aG = \aSU_n(D,f,\tau)$,
 and
 \item the maximal totally isotropic $D$-subspace of~$D^n$
is $1$-dimensional (in other words, $\Frank \aG = 1$).
 \end{itemize}
 \end{assumps}

 After a change of basis, to diagonalize the form, we may write
 $$ f(x,y) = x_1^\tau y_1 - x_2^\tau y_2 + x_3^\tau a_3 y_3 + x_4^\tau
a_4 y_4 + \cdots + x_n^\tau a_n y_n  ,$$
 where $a_j^\tau = a_j$ for each~$j$.
 (We may assume the form begins with $x_1^\tau y_1 - x_2^\tau y_2$, because it is isotropic.)

\setcounter{case}{0}

\begin{case}
 Assume $D = L$.
 \end{case}
 By normalizing the form $f(x,y)$, we may assume $a_3 = -1$.
 We may also assume $n \ge 4$; otherwise \fullref{MainForF}{SU} holds.
  
 For each $v \in \SG$:
 \begin{itemize}
 \item let $L_v = F_v \otimes_F L$,
 \item identify $F_v$ with $F_v \otimes_F F \subset L_v$,
 and
 \item let $\tau_v$ be the extension of~$\tau$ to an involution of~$L_v$ with fixed field~$F_v$.
 \end{itemize}
 
 \begin{claim}
 For every $v \in \SG$, there exist 
 $b_{v,4},b_{v,5},\ldots,b_{v,n} \in L_v$, such that 
 $$ \text{$a_4 \, b_{v,4}^{\tau_v} \, b_{v,4} + a_5 \, b_{v,5}^{\tau_v} \, b_{v,5} + \cdots + a_n \, b_{v,n}^{\tau_v} \, b_{v,n}$ is a nonzero square in~$F_v$.} $$
 \end{claim}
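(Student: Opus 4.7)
The plan is a short case analysis on how $L$ behaves at the archimedean place~$v$. Since $v \in \SG$ is archimedean, the \'etale $F_v$-algebra $L_v = F_v \otimes_F L$ is either split, that is $L_v \iso F_v \oplus F_v$ with $\tau_v$ the swap involution, or $L_v$ is a quadratic field extension of~$F_v$; in the latter case the hypothesis that $v$ is archimedean forces $F_v = \real$ and $L_v = \complex$, with $\tau_v$ complex conjugation.

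In the split case, I would use the fact that $b^{\tau_v} b = b^{(1)} b^{(2)}$ for $b = (b^{(1)}, b^{(2)}) \in L_v \iso F_v \oplus F_v$, so the map $b \mapsto b^{\tau_v} b$ is surjective onto $F_v$ (identified with the diagonal inside $L_v$). Choosing $b_{v,4} \in L_v$ with $b_{v,4}^{\tau_v} b_{v,4} = 1/a_4$ and $b_{v,j} = 0$ for $j \ge 5$ then makes the sum equal to~$1$, which is a nonzero square in~$F_v$.

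In the non-split case the map $b \mapsto b^{\tau_v} b = |b|^2$ takes values only in $\real_{\ge 0}$, so the sum $\sum_{j=4}^n a_j \, b_{v,j}^{\tau_v} b_{v,j}$ can be arranged to be a nonzero square in $F_v = \real$ (equivalently, a positive real) if and only if at least one of $a_4, \ldots, a_n$ is positive; once such an index $j$ is in hand, the choice $b_{v,j} = 1/\sqrt{a_j} \in \real \subset \complex$ and $b_{v,k} = 0$ for $k \ne j$ again yields sum~$1$. The main obstacle, and really the only substantive point, is to produce this positive~$a_j$ from the hypothesis $\locrank \aG \ge 2$. For this I would invoke the standard computation that over $L_v = \complex$ the Hermitian form $f$ is equivalent to $\langle 1, -1, -1, a_4, \ldots, a_n \rangle$, whose Witt index (which equals $\locrank \aG$) is $\min(p,q)$, where $(p,q)$ is the signature. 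The inequality $\locrank \aG \ge 2$ therefore forces $p \ge 2$; but only a single entry among $1,-1,-1$ contributes to~$p$, so some $a_j$ with $j \ge 4$ must be strictly positive, completing the proof.
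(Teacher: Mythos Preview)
Your argument is correct and follows essentially the same two-case split as the paper. In the split case you and the paper both observe that $b \mapsto b^{\tau_v} b$ is surjective onto~$F_v$; in the non-split case the paper simply says ``the desired conclusion follows from the fact that $\locrank \aG \ge 2$,'' whereas you spell out the underlying signature computation (Witt index $=\min(p,q)\ge 2$ forces $p\ge 2$, hence some $a_j>0$), which is exactly the content behind the paper's one-line appeal.
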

We consider two possibilities:
\begin{itemize}
 \item If $L \not\subset F_v$, then $L_v$ is a field extension of~$F_v$, and $\aG$ is isomorphic over~$F_v$ to
 $$\aSU_n(L_v, x_1^{\tau_v} y_1 - x_2^{\tau_v} y_2 - x_3^{\tau_v} y_3 
 + a_4 x_4^{\tau_v} y_4 + a_5 x_5^{\tau_v} y_5 + \cdots + a_n x_n^{\tau_v} y_n, \tau_v) .$$
 The desired conclusion follows from the fact that $\locrank \aG \ge 2$.
 \item If $L \subset F_v$, then there is an isomorphism $\varphi_v \colon (L_v,\tau_v) \to (F_v \oplus F_v, \overline{\tau})$, where $\overline{\tau}(x_1,x_2) = (x_2,x_1)$. Since
 	$ (x,1)^{\overline{\tau}} \, (x,1) = (x,x)$ is an arbitrary element of~$\varphi_v(F_v)$, 
the desired conclusion is obvious.
\end{itemize}
This completes the proof of the claim.

\medskip 

Combining the above claim with Weak Approximation yields
$b_4,b_5,\ldots,b_n \in L$, such that, for every $v \in \SG$,
 $$ \text{$a_4 \, b_4^{\tau_v} \, b_4 + a_5 \, b_5^{\tau_v} \, b_5 + \cdots + a_n \, b_n^{\tau_v} \, b_n$ is a nonzero square in~$F_v$.} $$
Let $a = a_4 \, b_4^{\tau_v} \, b_4 + a_5 \, b_5^{\tau_v} \, b_5 + \cdots + a_n \, b_n^{\tau_v} \, b_n$, so, after a change of basis, we may assume $a_4 = a$. Then
	$$ \aH = \aSO_4(x_1^1 - x_2^2 - x_3^2 + a x_4^2) \subset \aG .$$
From the choice of~$a$, we know $\locrank \aH = 2$ for every $v \in \SG$.
(Also, since $\Frank \aG = 1$, we know that $a = a_4$ is not a square in~$F$, so 
	$ \aH \approx \res{F[\sqrt{a}]/F} \aSL_2 $
is almost simple.) This contradicts the minimality of~$\aG$.

\begin{case} \label{2ndKindSect-DnotL}
 Assume $D \neq L$.
 \end{case}
 Choose a maximal subfield~$K$ of~$D$, such that $K$ is invariant
under~$\tau$, and let $K_0$ be the fixed field of~$\tau$ in~$K$. Then
	\begin{equation} \label{2ndKindSect-DnotL-GhasSL2}
 \aG
 \supset \res{K_0/F} \aSU_2(K,x_1^\tau y_1 - x_2^\tau y_2, \tau|_K) 
 \approx \res{K_0/F} \aSL_2
 	 . \end{equation}
Thus, we may assume $K_0$ is a quadratic extension of~$F$,
for otherwise minimality implies \fullref{MainForF}{SL2} holds.
 Then, since $|L:F| = 2 = |K:K_0|$, we have 
 	$$ 2 = |K_0:F| 
	= \frac{ |K:L| \cdot |L : F|}{|K:K_0|}
	= |K:L| ,$$
so $D$ is a quaternion algebra.

There is a quaternion algebra $D'$ over~$F$, such that $D = D' \otimes_F L$, and $\tau|_{D'}$ is the canonical involution \cite[Thm.~11.2(ii), p.~314]{Scharlau}. 

\begin{subcase}
Assume $n = 2$.
\end{subcase}
For every $v \in \SG$, we know $F_v$ splits~$D$ (because $n = 2$ and $\locrank \aG \ge 2$). 
Therefore, by Weak Approximation and the Hasse Principle, there is a quadratic extension~$E$ of~$F$, such that $E$ splits~$D'$ and, for each $v \in \SG$,
	\begin{equation} \label{2ndKindSect-DnotL-EinFv<>LinFv}
	E \subset F_v \quad \iff \quad L \subset F_v 
	. \end{equation}
Then $D$ splits over $E \cdot L$, so we may assume $K = E \cdot L$. 
Since $\tau$ is nontrivial on both $E$ and~$L$, we see from \pref{2ndKindSect-DnotL-EinFv<>LinFv} that the fixed field 
$K_0$ of~$\tau$ is contained in~$F_v$, for every $v \in \SG$. So the minimality of~$\aG$ (together with \pref{2ndKindSect-DnotL-GhasSL2}) implies \fullref{MainForF}{SL2} holds.

\begin{subcase}
Assume $n \ge 3$.
\end{subcase}
By replacing $\tau$ with $\int(a_3^{-1}) \circ \tau$, we may assume $a_3 = 1$ \cf{NormalizeHerm}.
By Weak Approximation and the Hasse Principle, there is a quadratic extension~$E$ of~$F$, such that $E$ splits~$D'$ and, for each $v \in \SG$,
	\begin{equation} \label{2ndKindSect-DnotL-LnotinFv->EnotinFv}
	L \not\subset F_v \quad\implies\quad E \not\subset F_v
	. \end{equation}
Then $D$ splits over $E \cdot L$, so we may assume $K = E \cdot L$. 

Let $\aH_0 = \aSU_3(K, x_1^\tau y_1 - x_2^\tau y_2 + x_3^\tau y_3 , \tau|_K)$ and $\aH = \res{K_0/F} \aH_0 \subset \aG$. For any $v \in \SG$:
\begin{itemize}
\item If $K_0 \subset F_v$, then $K_0 \otimes_F F_v \iso F_v \oplus F_v$. Therefore, it is clear that $\locrank \aH \ge 2$ (since $\aH_0$ is isotropic).
\item If $K_0 \not\subset F_v$, then, from \pref{2ndKindSect-DnotL-LnotinFv->EnotinFv} and the fact that $\tau$ is nontrivial on both $E$ and~$L$, we see that $L \subset F_v$. Therefore $\aH_0$ is inner (hence, split) over the field $K_0 \otimes_F F_v$, so $\locrank \aH \ge 2$.
\end{itemize}
This contradicts the minimality of~$\aG$.

\subsection{Unitary groups of the first kind}

\begin{assumps} \ 
 \begin{itemize}
 \item $D$ is a quaternion algebra over~$F$,
 \item $\tau$ is the canonical involution of~$D$,
 \item $f$ is a $\tau$-Hermitian or $\tau$-skew Hermitian form on $D^n$,
for some~$n$,
 \item $\aG = \aSU_n(D, f, \tau)$,
 and
 \item the maximal totally isotropic $D$-subspace of~$D^n$
is $1$-dimensional (in other words, $\Frank \aG = 1$).
 \end{itemize}
 \end{assumps}

 After a change of basis, to diagonalize the form, we may write
 $$ f(x,y) = 
 \begin{cases}
 x_1^\tau y_1 - x_2^\tau y_2 + x_3^\tau a_3 y_3 + x_4^\tau
a_4 y_4 + \cdots + x_n^\tau a_n y_n 
 & \text{if $f$ is Hermitian}, \\
 x_1^\tau y_2 - x_2^\tau y_1 + x_3^\tau a_3 y_3 + x_4^\tau
a_4 y_4 + \cdots + x_n^\tau a_n y_n 
 & \text{if $f$ is skew-Hermitian} 
 . \end{cases} $$
 (We may assume the form begins with $x_1^\tau y_1 - x_2^\tau y_2$ or
$x_1^\tau y_2 - x_2^\tau y_1$ respectively, because  $\Frank \aG = 1 \neq
0$.)
 Note that $a_3,\ldots,a_n$ are
 \begin{itemize}
 \item elements of~$F$ if $f$ is Hermitian,
 and
 \item purely imaginary if $f$ is skew-Hermitian.
 \end{itemize}

\setcounter{case}{0}

\begin{case}
Assume $n \le 3$.
\end{case}
The quaternion algebra~$D$ must split over~$F_v$, for each $v \in \SG$ (because $\locrank \aG \ge 2$).

\begin{subcase}
 Assume $f$ is Hermitian.
 \end{subcase}
 Let
 $$ \aG' =  \aSU_2(D, x_1^\tau y_1 - x_2^\tau y_2, \tau) \subset \aG .$$
 Then $\aG'$ is of type~$\aC_2$ (see \cite[Prop.~2.15(2), p.~86]{[PR]}), so it is also of type~$B_2$. Therefore, it has a realization to which \S\ref{orthogonal} applies.

\begin{subcase}
 Assume $f$ is skew-Hermitian.
 \end{subcase}
 Because $\aG$ is absolutely almost simple, we know it is not of type $D_2 = A_1 \times A_1$. Therefore $n = 3$. Then $\aG$ is of type~$D_3$, so it is also of type~$A_3$. Therefore, it
has a realization to which either \S\ref{SLSect} or~\S\ref{2ndKindSect}
applies.

\begin{case}
 Assume $n \ge 4$.
 \end{case}

\begin{subcase}
 Assume $f$ is Hermitian.
 \end{subcase}
By Weak Approximation and the Hasse Principle, there is a quadratic extension~$E$ of~$F$, such that $E$ splits~$D$ and, for $v \in \SG$,
	\begin{equation} \label{1stKindSect-n>4-Herm-E}
	\text{$F_v$ splits $D$} \quad\implies\quad E\subset F_v
	. \end{equation}

By normalizing, we may assume $a_3 = -1$. 
By Weak Approximation, there exist $b_4,b_5,\ldots,b_n \in F$, with the property that, for every $v \in \SG$, such that $F_v$ does \emph{not} split~$D$, we have 
	\begin{equation} \label{1stKindSect-n>4-Herm-a}
	\text{$a_4 b_4^2 + a_5 b_5^2 + \cdots + a_n b_n^2 > 0$ in~$F_v$.} 
	\end{equation}
Let $a = a_4 b_4^2 + a_5 b_5^2 + \cdots + a_n b_n^2$, so, after a change of basis, we may assume $a_4 = a$.

Let 
	$$ \aH = \aSU_4( E, x_1^\tau y_1 - x_2 y_2^\tau - x_3 y_3^\tau + a x_4^\tau y_4, \tau|_E) \subset \aG .$$
For any $v \in \SG$:
\begin{itemize}
\item If $E \subset F_v$, then $\aH$ is split over~$F_v$, so $\locrank \aH = 3$.
\item If $E \not\subset F_v$, then $F_v$ does not split~$D$ \see{1stKindSect-n>4-Herm-E}, so $a > 0$ in~$F_v$ \see{1stKindSect-n>4-Herm-a}. Hence $\locrank \aH = 2$.
\end{itemize}
This contradicts the minimality of~$\aG$.

\begin{subcase}
 Assume $f$ is skew-Hermitian.
 \end{subcase}
 Because $f$ is skew-Hermitian, we know that $a_3$ and~$a_4$ are
purely imaginary elements of~$D$, so there exists a nonzero, purely
imaginary $\alpha \in D$, such that $a_3$~and~$a_4$ both negate~$\alpha$;
that is, $a_3 \alpha = - \alpha a_3$ and $a_4 \alpha = - \alpha a_4$. (To
see this, note that $a_3$ and~$a_4$ each negate a $2$-dimensional space of
imaginary elements of~$D$. Since the imaginary elements form only a
$3$-dimensional space, there must be nonzero intersection.) Hence, $a_3$
and~$a_4$ act by conjugation on $F[\alpha]$.

Let
 \begin{itemize}
 \item $F' = F[\alpha] \subset D$,
 \item $\{e_1, e_2, e_3, e_4\}$ be an orthogonal basis of~$D^4$, such that
 $$ \text{$f(e_1,e_1) = -f(e_2,e_2) = f(e_3,e_3) = a_3$ and $f(e_4,e_4) =
a_4$} $$
 (namely, $e_1 = \frac{1}{2}(a_3,1,0,0)$, $e_2 =
\frac{1}{2}(-a_3,1,0,0)$, $e_3 = (0,0,1,0)$, and $e_4 = (0,0,0,1)$),
 \item $V'$ be the $F'$ span of $\{e_1, e_2, e_3, e_4\}$,
 and
 \item $f'$ be the restriction of $a_3^{-1} f$ to~$V'$.
 \end{itemize}
 Note that $a_3^{-1} a_4$ centralizes~$\alpha$, so it must belong
to~$F'$.

Then 
 \begin{itemize}
 \item $f'(V' \times V') \subseteq F'$, 
 \item $f'$ is a nondegenerate, symmetric $F'$-bilinear form on~$V'$,
  \item $f'$ is isometric to the form $f'' = x_1 y_1 - x_2 y_2 + x_3 y_3 +
(a_3^{-1} a_4) x_4 y_4$ on~$(F')^4$,
and
 \item $\aG
 =\aSU_n(D,f,\tau) 
 \supset \res{F'/F} \aSO(f') 
 \approx \res{K/F} \aSL_2$,
 where $K = F'\left[ \sqrt{-a_3^{-1} a_4} \, \right]$.
 \end{itemize}
Since $\Frank \aG = 1$, we know that $f'$ has no $2$-dimensional totally
isotropic subspace, so $-a_3^{-1} a_4$ is \emph{not} a square in~$F'$.
Hence, we have
 $$|K : F|
 = |K : F'| \cdot |F' : F|
 = 2 \cdot 2
 > 2 .$$
This contradicts the minimality of~$\aG$.

\section{Groups of type $F_4$} \label{F4Sect}

\begin{prop}
Let\/ $\aG$ be an absolutely almost simple $F$-group of type $F_4$, such that $\Frank \aG = 1$.
Then\/ $\aG$ contains an isotropic, simply connected, absolutely almost simply $F$-subgroup\/~$\aH$ of type~$C_3$, such that $\locrank \aH \ge 2$ for every $v \in \SG$.
\end{prop}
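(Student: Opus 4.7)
My plan is to realize $\aH$ as a standard $C_3$-subgroup of $\aG$ inside the $F_4$ root system. By the Tits classification, since $\Frank \aG = 1$ the Tits index of $\aG$ is $F_{4,1}^{21}$, whose anisotropic kernel $\aM \iso \aSpin(q)$ is of type $B_3$, where $q$ is the pure-trace part of the norm form of an octonion division algebra $\mathbb{O}$ over $F$. Labelling the simple roots of $F_4$ as $\alpha_1 - \alpha_2 \Rightarrow \alpha_3 - \alpha_4$ (so $\alpha_1, \alpha_2$ are long and $\alpha_3, \alpha_4$ short), $\aM$ has simple roots $\alpha_1, \alpha_2, \alpha_3$, and $\alpha_4$ is the distinguished simple root. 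A direct inspection of the $F_4$ root system shows that $\alpha_2, \alpha_3, \alpha_4$ generate a subsystem of type $C_3$ with long simple root $\alpha_2$; this will be the root system of $\aH$.

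\textbf{Construction over $F$.} To produce $\aH$ as an $F$-subgroup, I would apply the strategy of Proposition~\ref{Malpha}. The relevant hypotheses are: $F_4$ has no outer automorphisms, so the $*$-action on the Dynkin diagram of $\aG$ is trivial and every simple root is $*$-fixed; the Weyl group $W(F_4)$ contains $-1$; and $\aG$ splits over a quadratic extension $L/F$ (equivalently, its anisotropic kernel $\aM$ does, equivalently the octonion algebra $\mathbb{O}$ does). This last point is the main technical step: $\mathbb{O}$ ramifies at only finitely many places of $F$, and a standard local-global argument with weak approximation yields a quadratic extension $L/F$ whose completion at each ramified place splits $\mathbb{O}_v$ locally, hence $L$ splits $\mathbb{O}$ globally. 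Running the Malpha construction in this setting—either applied to $\aM$ and extended through $\aS$, or applied to $\aG$ via a Borel $L$-subgroup—furnishes a maximal $F$-torus $\aT$ of $\aG$ such that the rank-$1$ standard subgroup $\aG_{\alpha}$ is defined over $F$ for every $*$-fixed simple root $\alpha$; in particular $\aG_{\alpha_2}$, $\aG_{\alpha_3}$, $\aG_{\alpha_4}$ are $F$-defined. I then take $\aH := \langle \aG_{\alpha_2}, \aG_{\alpha_3}, \aG_{\alpha_4}\rangle$, the $F$-defined standard subgroup of $\aG$ whose root system is the $C_3$ generated by $\alpha_2, \alpha_3, \alpha_4$.

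\textbf{Verification.} By Remark~\ref{WhichSC}, $\aH$ is simply connected: the long roots of $F_4$ in the $\rational$-span of $\{\alpha_2, \alpha_3, \alpha_4\}$ are exactly the six long roots $\pm\alpha_2$, $\pm(\alpha_2 + 2\alpha_3)$, $\pm(\alpha_2 + 2\alpha_3 + 2\alpha_4)$ of the $C_3$ subsystem, all of which are roots of $\aH$. By construction $\aH$ is absolutely almost simple of type $C_3$. For isotropy over $F$, note that $\aG_{\alpha_4}$ is an $F$-split $\aSL_2$ containing the $1$-dimensional $F$-split torus $\aS = \alpha_4^\vee(\Gmult)$ of $\aG$, and $\aG_{\alpha_4} \subset \aH$. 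For each $v \in \SG$, the real forms of $F_4$ have rank $0$, $1$, or $4$ (complex forms being automatically split), so $\locrank \aG \ge 2$ forces $\aG$ to be $F_v$-split; then $\aH$, as a standard subgroup of the split group $\aG_{F_v}$, is itself $F_v$-split, so $\locrank \aH = 3$. The main obstacle in this plan is exhibiting the quadratic splitting field $L$ for $\mathbb{O}$, which Proposition~\ref{Malpha} requires; this is handled by the local-global argument above.
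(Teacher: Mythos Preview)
Your argument has a genuine gap at the step where you claim $\aH = \langle \aG_{\alpha_2}, \aG_{\alpha_3}, \aG_{\alpha_4}\rangle$ is defined over~$F$. Proposition~\ref{Malpha} applies only to the \emph{anisotropic} kernel~$\aM$, whose simple roots are $\alpha_1,\alpha_2,\alpha_3$; it says nothing about $\aG_{\alpha_4}$. Extending the Borel-pair construction of that proposition directly to the isotropic group~$\aG$ fails: any Borel $L$-subgroup $\aB$ of~$\aG$ contained in the $F$-parabolic~$\aP$ has $\sigma(\aB)\subset\aP$ as well, so $\aB\cap\sigma(\aB)$ contains the unipotent radical of~$\aP$ and is not a torus. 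In fact, with the maximal $F$-torus $\aS\aT$ obtained by adjoining~$\aS$ to the torus~$\aT$ of~$\aM$ furnished by Proposition~\ref{Malpha}, one computes (in the standard $\varepsilon$-coordinates for $F_4$) that the nontrivial element $\sigma\in\Gal(L/F)$ acts by $\varepsilon_1\mapsto\varepsilon_1$ and $\varepsilon_i\mapsto-\varepsilon_i$ for $i=2,3,4$; hence
\[
\sigma(\alpha_4)=\tfrac12(\varepsilon_1+\varepsilon_2+\varepsilon_3+\varepsilon_4)
=\alpha_1+2\alpha_2+3\alpha_3+\alpha_4.
\]
This root has nonzero $\alpha_1$-coefficient, so the parabolic subsystem spanned by $\alpha_2,\alpha_3,\alpha_4$ is \emph{not} Galois-stable, and your~$\aH$ is not an $F$-subgroup. (Relatedly, $\aS$ is the image of $\varepsilon_1^\vee$, not of $\alpha_4^\vee$, so $\aG_{\alpha_4}$ does not contain~$\aS$.) A secondary issue: even if $\aH$ were $F$-defined, ``standard subgroup of a split group'' does not imply $F_v$-split unless the defining torus is $F_v$-split, which requires $L\subset F_v$; you never impose this.

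The paper avoids both problems by taking a \emph{different} $C_3$-subsystem. It applies Proposition~\ref{Malpha} only to obtain $\aG_{\alpha_1}$ over~$F$, sets $\aR=\aT\cap\aG_{\alpha_1}$ (a $1$-dimensional anisotropic $F$-torus, $L$-split), and defines $\aH$ as the semisimple part of $\aC_\aG(\aR)$ --- automatically $F$-defined. This $\aH$ is generated by $\pm\alpha_3,\pm\alpha_4,\pm(\alpha_1+2\alpha_2+2\alpha_3)$, a $C_3$-subsystem consisting of the roots \emph{orthogonal} to~$\alpha_1$ (which is visibly Galois-stable since $\sigma(\alpha_1)=-\alpha_1$). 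Choosing $L\subset F_v$ for every $v\in\SG$ (via Weak Approximation) makes $\aR$ split over~$F_v$, so $\aH$ splits there too.
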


\begin{proof}
The Tits Classification \cite[p.~60]{[T66]} tells us that the Tits index of~$\aG$ is 
$$
\begin{picture}(85,22)(-5,-10)
\put(00,00){\line(1,0){20}} \put(20,1.1){\line(1,0){30}}
\put(20,-1.2){\line(1,0){30}} \put(30,-3){$<$}
\put(50,00){\line(1,0){20}} 
\put(00,0){\circle*{5}}
\put(20,0){\circle*{5}} 
\put(50,0){\circle*{5}}
\put(70,0){\circle*{5}} 
\put(0,0){\circle{10}}
\put(-5,10){$\alpha_4$} \put(15,10){$\alpha_3$}
\put(45,10){$\alpha_2$} \put(65,10){$\alpha_1$}
\end{picture}
$$
(We number the simple roots as in \cite[p.~223]{[B68]}.)
Let $\aS$ be an $F$-split $1$-dimensional torus in
$\aG$ and  let $\aM$ be the corresponding semisimple $F$-anisotropic
kernel. 

For each $v \in \SG$, we have $\locrank \aG > 1$, so
the Tits Classification \cite[p.~60]{[T66]} implies $\aG$ is
split over~$F_v$. Hence, by Weak Approximation and the Hasse Principle, there is a quadratic extension~$L$ of~$F$, such that $L$ splits~$\aG$ and
	\begin{equation} \label{F4NotMin-LinFv}
	\text{$L \subset F_v$, for every $v \in \SG$.} 
	\end{equation}
Since $L$ splits~$\aG$ (and hence splits~$\aM$), there is an $L$-split maximal $F$-torus~$\aT$ of~$\aM$, such that the standard semisimple subgroup~$\aG_{\alpha_1}$ generated by the roots $\pm\alpha_1$ is defined over~$F$ \see{Malpha}.
Let $\aR = \aT \cap \aG_{\alpha_1} \subset \aM$, so $\aR$ is a $1$-dimensional, $L$-split, anisotropic $F$-torus.

Let $\aH$ be the semisimple part of the identity component of $\aC_{\aG}(\aR)$. 
We know that $\aH$ is defined over~$F$ (since $\aR$ is defined over~$F$).
It is easy to see that $\aH$ is the standard semisimple subgroup generated by the roots 
	$$ \pm \alpha_3, \pm \alpha_4, \pm (\alpha_1 + 2 \alpha_2 + 2 \alpha_3). $$ 
Thus, $\aH$ is of type $C_3$, so it is (absolutely) almost simple over~$F$. 
Also, $\aH$ is simply connected. (Note that $G$ is simply connected because it is of type~$F_4$, and see~\pref{WhichSC}.)
Furthermore, since $\aH$ has absolute rank~$3$, we have $\aC_{\aG}(\aR) = \aH \aR$.
\begin{itemize}
\item Since $\aR \subset \aM$, we know $\aS \subset \aC_{\aG}(\aR) = \aH \aR$. 
Since $\aS$ is isotropic over~$F$, and $\aR$ is anisotropic, this implies $\aH$ is isotropic over~$F$.
\item By construction, $L$ splits both~$\aG$ and~$\aR$; therefore, $\aC_{\aG}(\aR)$ contains an $L$-split maximal torus of~$\aG$. Since $\aC_{\aG}(\aR) = \aH \aR$, we conclude that $\aH$ splits over~$L$. From \pref{F4NotMin-LinFv}, we conclude that $\aH$ splits over~$F_v$, for every $v \in \SG$, so $\locrank \aH > 1$. \qedhere
\end{itemize}
\end{proof}

\begin{cor} \label{F4NotMin}
If\/ $\aG$ is of type~$F_4$, then $\aG$ is not minimal.
\end{cor}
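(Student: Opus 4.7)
The plan is to combine the preceding proposition with the reductions already established in Section~\ref{PrelimSect} to exhibit a proper $F$-subgroup that witnesses the failure of minimality. Suppose for contradiction that $\aG$ is of type $F_4$ and is minimal.

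First I would reduce to $\Frank \aG = 1$. By Lemma~\ref{FrankMust1}, minimality forces either $\Frank \aG = 1$ or $\aG$ isogenous to $\aSL_3$; the latter is impossible since $\aSL_3$ is of type $A_2$, not $F_4$. So $\Frank \aG = 1$, and I can invoke the proposition just proved to produce an $F$-subgroup $\aH$ of type $C_3$ such that $\aH$ is isotropic over $F$, absolutely almost simple, and satisfies $\locrank \aH \ge 2$ for every $v \in \SG$.

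Since a group of type $C_3$ is a proper subgroup of a group of type $F_4$ (as the Dynkin diagrams, and thus the absolute ranks, differ), $\aH$ is a proper, isotropic, almost simple $F$-subgroup of $\aG$ with $\locrank \aH \ge 2$ at every $v \in \SG$. This directly contradicts the definition of minimality. The argument is essentially immediate from the proposition, so there is no real obstacle to overcome; the only thing to check carefully is that $\aH \ne \aG$, which follows simply because $\aH$ has absolute rank $3$ while $\aG$ has absolute rank $4$.
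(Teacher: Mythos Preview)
Your proof is correct and matches the paper's approach: the corollary is stated there without a separate proof, since it follows immediately from the proposition once Lemma~\ref{FrankMust1} rules out $\Frank \aG \ge 2$. Your observation that $\aH \neq \aG$ because of the difference in absolute rank is exactly the right way to see that $\aH$ is proper.
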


\section{Groups of type $\out{3,6}D4$} \label{TrialitySect}

The following theorem may be of independent interest. The proof makes no use of our standing assumption that $F$ is an algebraic number field --- it suffices to assume only that $\mathop{\rm char} F \neq 2$.

\begin{thm} \label{TrialityHasSL2}
Let\/ $\aG$ be an absolutely almost simple $F$-group of type $\out3D4$ or~$\out6D4$, such that $\Frank \aG = 1$.
Then there exists an extension field~$K$ of~$F$, such that\/ $\res{K/F} \aSL_2$ is isogenous to an $F$-subgroup of\/~$\aG$, and\/ $|K:F| = 4$.
\end{thm}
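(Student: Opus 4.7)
The plan is to adapt the argument of Section~\ref{F4Sect} to the triality $D_4$ setting, using the Tits index to locate an appropriate $A_1^4$-subsystem inside $D_4$ and then producing the desired $\res{K/F}\aSL_2$ by Galois descent from a well-chosen maximal $F$-torus. By the Tits Classification, any $\aG$ of type $\out3D4$ or $\out6D4$ with $\Frank\aG=1$ has the unique Tits index whose only circled vertex is the central vertex $\alpha_2$ (in Bourbaki numbering); the anisotropic kernel is the standard subgroup of type $A_1^3$ generated by $\pm\alpha_1,\pm\alpha_3,\pm\alpha_4$, and the $*$-action permutes $\{\alpha_1,\alpha_3,\alpha_4\}$ transitively while fixing $\alpha_2$. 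Since $-1\in W(D_4)$, Proposition~\ref{Malpha} applies to $\alpha_2$: for any Galois extension $L/F$ splitting $\aG$, there is an $L$-split maximal $F$-torus $\aT$ of $\aG$ such that the standard rank-one subgroup $\aG_{\alpha_2}$ is defined over $F$.

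Relative to this $\aT$, I would consider the four roots
$$\alpha_2,\quad \alpha_2+\alpha_3+\alpha_4,\quad \alpha_1+\alpha_2+\alpha_3,\quad \alpha_1+\alpha_2+\alpha_4,$$
which in the standard $e_i$-basis of $D_4$ read $e_2-e_3$, $e_2+e_3$, $e_1-e_4$, $e_1+e_4$ and are visibly pairwise orthogonal; they therefore span an $A_1^4$-subsystem $\Phi$. A direct check shows that the $*$-action fixes $\alpha_2$ and cyclically permutes the other three roots, so the standard subgroup $\aH\subset\aG$ generated by $\Phi$ is defined over $F$; moreover the $F$-split torus $\aS$, whose Cartan direction is $e_1+e_2$, equals half the sum of the four roots of $\Phi$ and so lies inside the maximal torus of $\aH$.

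The heart of the argument is then to choose $\aT$ so that $\Gal(L/F)$ acts transitively on the four $A_1$-factors of $\aH$, rather than by the fixed-point-plus-$3$-cycle pattern produced by the $*$-action alone. One does this by twisting $\aT$ by a $1$-cocycle into $N_\aG(\aT)$ whose image in $W(D_4)$ carries $\alpha_2$ into the $*$-orbit of the other three roots, so that the composite permutation on $\Phi$ is transitive. Such cocycles are abundant, since $W(D_4)$ realizes every even signed permutation of $\{e_1,\ldots,e_4\}$ and hence, combined with triality, every transitive action on the four pairwise-orthogonal roots of $\Phi$. Given such a $\aT$, Galois descent identifies $\aH$, up to isogeny, with $\res{K/F}\aH_{00}$, where $K$ is the fixed field of the stabilizer of a single factor (necessarily a field of degree $4$ over $F$ by transitivity) and $\aH_{00}$ is an absolutely almost simple form of $A_1$ over $K$. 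The $F$-split torus $\aS\subset\aH$ then base-changes to a $K$-split $1$-dimensional torus inside the $\aSL_2$-factor of $\aH\otimes_F K\cong\aSL_2^4$, forcing the associated quaternion algebra over $K$ to split, and hence $\aH\approx\res{K/F}\aSL_2$ as required. The main obstacle is precisely the twisting step: showing that the desired abstract Weyl cocycle is realized by an honest maximal $F$-torus of $\aG$, which is a Galois-cohomological calculation specific to the triality $D_4$ case.
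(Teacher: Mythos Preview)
Your target subsystem $\Phi=\{\alpha_2,\ \alpha_1+\alpha_2+\alpha_3,\ \alpha_1+\alpha_2+\alpha_4,\ \alpha_2+\alpha_3+\alpha_4\}$ is exactly the one the paper uses, and your observation that $\aS$ lies in the torus of the corresponding $A_1^4$-subgroup is correct. But the proof is essentially missing, because the step you yourself flag as ``the main obstacle'' is the entire content of the theorem.

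Two concrete problems. First, Proposition~\ref{Malpha} does not apply here: it is stated for an \emph{anisotropic} group and a \emph{quadratic} splitting field, whereas you invoke it for $\aG$ itself (which has $\Frank\aG=1$) and an arbitrary Galois splitting field. More importantly, even if you had such a torus, the Galois action you obtain on $\Phi$ is (as you note) a fixed point on $\alpha_2$ plus a $3$-cycle on the rest; then $\aH$ is defined over~$F$ but is \emph{not} $F$-simple---$\aG_{\alpha_2}$ splits off as a normal $F$-factor---so you do not get $\res{K/F}\aSL_2$ with $|K:F|=4$. Second, your proposed fix (``twist $\aT$ by a cocycle in $N_\aG(\aT)$ whose Weyl image merges $\alpha_2$ into the other orbit'') is not justified: abundance of elements in $W(D_4)$ says nothing about whether the resulting cocycle has trivial image in $H^1(F,\aG)$, which is exactly what is needed for the twisted torus to live in the \emph{same} $\aG$.

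The paper resolves this not by abstract twisting but by an explicit construction. It uses the known description of the anisotropic kernel $\aM\approx\res{L/F}\aSL_1(D)$ with $L/F$ cubic and $D=(a,b_1)_L$ a quaternion algebra satisfying $a\in F$ and $N_{L/F}(b_1)=1$, and takes $\aT$ to come from the maximal subfield $P=L[\sqrt{b_1}]$ of~$D$. The action of $\Gal(\widetilde P/F)$ on $\{\pm\alpha_1,\pm\alpha_3,\pm\alpha_4\}$ is then identified with its action on $\{\pm\sqrt{b_1},\pm\sqrt{b_3},\pm\sqrt{b_4}\}$, and the norm condition forces $\sqrt{b_1}\sqrt{b_3}\sqrt{b_4}\in F$, so every Galois element flips an \emph{even} number of signs. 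That parity constraint is precisely what makes $\Phi_H^+$ a single Galois orbit (via a short computation with the highest root), yielding transitivity on the four $A_1$-factors. The paper even remarks afterward that a different choice of maximal subfield of~$D$ would destroy this: the norm-one condition on $b_1$ is the crux, and nothing in your outline supplies a substitute for it.
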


\begin{proof}
We start with notation:
\begin{itemize}
\item Let $\aS$ be a maximal $F$-split torus of~$\aG$.
\item Let $\aM = [\aC_{\aG}(\aS),\aC_{\aG}(\aS)]$ be the semisimple $F$-anisotropic kernel of~$\aG$.
\item It is well known \cite[Thm.~43.8 and Prop.~43.9, p.~555]{BookInvols} that there exist 
	\begin{itemize}
	\item a cubic extension~$L$ of~$F$, 
	and
	\item a quaternion algebra $D = (a,b_1)_L$ over~$L$, 
	\end{itemize}
	such that
	\begin{itemize}
	\item $\aM$ is isogenous to $\res{L/F} \aSL_1(D)$,
	\item $a \in F$,
	\item $b_1 \in L$,
	and
	\item $N_{L/F} (b_1) = 1$.
	\end{itemize}
Because $\res{L/F} \aSL_1(D)$ is anisotropic, we know that $D$ is a division algebra.
\item Let $P = L \bigl[ \sqrt{b_1} \bigr]$, so $P$ is isomorphic to a maximal subfield of~$D$.
\item Let $\widetilde{P}$ be the Galois closure of $P$ over~$F$. 
\item There is a maximal $F$-torus $\aT$ of~$\aM$ that is isogenous to $\res{L/F} \left( \res{P/L}^{(1)} \Gmult \right)$.
\item Let 
	$$\Phi_H^+ = \{ \alpha_2, \alpha_2 + \alpha_1 + \alpha_3, \alpha_2 + \alpha_1 + \alpha_4, \alpha_2 + \alpha_3 + \alpha_4 \}$$
and
	$$ \Phi_H = \{\, \pm \alpha \mid \alpha \in \Phi_H^+ \,\} ,$$
where $\{\alpha_1,\alpha_2,\alpha_3,\alpha_4\}$ is a base of the roots of~$\aG$ with respect to the maximal torus $\aS \aT$, numbered as in Figure~\ref{TrialityTitsFig}.
\item Let $\aH$ be the standard subgroup of~$\aG$ generated by the roots in~$\Phi_H$.
Since the roots in $\Phi_H^+$ are pairwise orthogonal, it is obvious that $\aH$ is a semisimple group that is of type $A_1 \times A_1 \times A_1 \times A_1$ over the algebraic closure~$\overline{F}$.
\end{itemize}

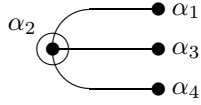
\begin{figure}[ht]
\begin{picture}(110,40)(-25,-15)
\put(00,00){\line(1,0){40}} \put(40,0){\oval(80,30)[l]}
\put(0,0){\circle*{5}} \put(40,0){\circle*{5}}
\put(40,15){\circle*{5}} \put(40,-15){\circle*{5}}
\put(00,0){\circle{12}}  
\put(-17,7){$\alpha_2$} \put(45,-2){$\alpha_3$}
\put(45,13){$\alpha_1$} \put(45,-17){$\alpha_4$}
\end{picture}
\caption{The Tits index of the trialitarian group~$\aG$.}
\label{TrialityTitsFig}
 \end{figure}

Since $\aM$ and $\aT$ are defined over~$F$, the Galois group $\Gal(\widetilde{P}/F)$ acts on the set
	$$\Phi_M = \{ {\pm \alpha_1}, \pm \alpha_3, \pm \alpha_4 \} $$
of roots of~$\aM$. Letting $b_3$ and~$b_4$ be the Galois conjugates of~$b_1$ (over~$F$), it is clear that $\Gal(\widetilde{P}/F)$ also acts on
	$$ B = \left\{ {\pm \sqrt{b_1}}, \pm \sqrt{b_3}, \pm \sqrt{b_4} \,\right\} .$$
It is easy to see that these two actions are isomorphic (because both are transitive and have $\Gal(\widetilde{P}/P)$ as the stabilizer of a point). Therefore, after renumbering and choosing the signs of the square roots appropriately, we know, for any $\varphi \in \Gal(\widetilde{P}/F)$, that there exist $\varepsilon_1, \varepsilon_3, \varepsilon_4 \in \{0,1\}$ and a permutation~$\sigma$ of $\{1,3,4\}$, such that
	$$ \text{$\varphi(\alpha_i) = (-1)^{\varepsilon_i} \alpha_{\sigma(i)}$ and $\varphi \bigl( \sqrt{b_i} \bigr) = (-1)^{\varepsilon_i} \sqrt{b_{\sigma(i)}}$ for $i = 1,3,4$.} $$ 
Since 
	$$\sqrt{b_1} \sqrt{b_3} \sqrt{b_4} = \pm \sqrt{b_1 b_3 b_4} = \pm \sqrt{N_{L/F}(b_1)} = \pm \sqrt{1} \in F ,$$
we know that $\sqrt{b_1} \sqrt{b_3} \sqrt{b_4}$ is fixed by~$\varphi$; therefore $\varepsilon_1 + \varepsilon_3 + \varepsilon_4$ is even. Hence,
	\begin{equation} \label{Epsilon0or2}
	\text{$\#\{\, i \mid \varepsilon_i \neq 0  \,\}$ is either $0$ or $2$.}
	\end{equation}
Let $\mu  = 2 \alpha_2 + \alpha_1 + \alpha_3 + \alpha_4$ be the maximal root of~$\aG$. The restriction of~$\mu$ to~$\aS$ is different from the restriction of any other root, so $\mu$ must be fixed by every element of $\Gal(\widetilde{P}/F)$. Therefore
	\begin{align*}
	2 \alpha_2 + \sum_{i \in \{1,3,4\}} \alpha_i
	&= \varphi \left( 2 \alpha_2 + \sum_{i \in \{1,3,4\}} \alpha_i \right) \\
	&= 2 \varphi(\alpha_2) + \sum_{i \in \{1,3,4\}} (-1)^{\varepsilon_i} \alpha_{\sigma(i)} 
	, \end{align*}
so
	\begin{equation} \label{TrialityPf-Phi(alpha2)}
	\varphi(\alpha_2) = \alpha_2 + \sum_{i \in \{1,3,4\}} \varepsilon_{\sigma^{-1}(i)} \alpha_i 
	. \end{equation}
From \pref{Epsilon0or2}, we conclude that $\varphi(\alpha_2) \in \Phi_H^+$.

Since $\varphi$ is an arbitrary element of $\Gal(\widetilde{P}/F)$, the conclusion of the preceding paragraph implies that $\Phi_H^+$ contains the entire orbit of~$\alpha_2$ under $\Gal(\widetilde{P}/F)$. In fact, it is easy to see that this orbit must be all of~$\Phi_H^+$. Since $\aT$, and hence~$\aG$, is obviously split over~$\widetilde{P}$, this implies that $\aH$ is defined over~$F$ and is almost $F$-simple. Also, since $\aS \subset \aH$, it is obvious that $\aH$ is isotropic over~$F$. Because $\aH$ is of type $A_1 \times A_1 \times A_1 \times A_1$ over the algebraic closure, it  is now clear that $\aH$ is isogenous to $\res{K/F} \aSL_2$, where $K$ is an extension of degree~$4$ over~$F$.
\end{proof}

\begin{rem}
The specific choice of the maximal subfield~$P$ of~$D$ is crucial in the above proof; it is important that $|\widetilde{P} : \widetilde{L}| = 4$, where $\widetilde{P}$ and $\widetilde{L}$ are the Galois closures of~$P$ and~$L$ over~$F$. If $P$ is chosen differently, then the action of the Galois group on the roots of~$\aG$ is different, and the standard subgroup generated by the roots in $\Phi_H$ is not defined over~$F$.
\end{rem}

\begin{rem}
Unfortunately, in the situation of Theorem~\ref{TrialityHasSL2}, it follows easily from Remark~\ref{WhichSC} that if $\aH$ is a subgroup of~$\aG$ that is isogenous to $\res{K/F} \aSL_2$, with $|K:F| = 4$, then $\aH$ is \emph{not} simply connected. Indeed, if $\aG$ is simply connected, then the fundamental group of~$\aH$ has order~$2$.
\end{rem}

\begin{cor}
If\/ $\aG$ is of type $\out3D4$ or $\out6D4$, then\/ $\aG$ is not minimal.
\end{cor}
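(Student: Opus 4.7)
The plan is to derive a contradiction from the assumption of minimality by exhibiting the subgroup promised by Theorem~\ref{TrialityHasSL2} and checking that it witnesses the failure of minimality. Suppose $\aG$ is minimal. Since $\aG$ is of type $\out3D4$ or $\out6D4$, it is not isogenous to $\aSL_3$, so Lemma~\ref{FrankMust1} forces $\Frank \aG = 1$. A trialitarian form is automatically absolutely almost simple, so the hypotheses of Theorem~\ref{TrialityHasSL2} are met, producing an $F$-subgroup $\aH$ of $\aG$ that is isogenous to $\res{K/F}\aSL_2$ for some extension $K/F$ of degree $4$.

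Next I would check that $\aH$ satisfies the three ``easy'' requirements of the minimality definition. The group $\aH$ is proper in $\aG$ because $\aH$ has type $A_1^4$ over the algebraic closure while $\aG$ has type $D_4$, so in particular $\dim \aH < \dim \aG$. The group $\aH$ is almost $F$-simple because $K$ is a field of finite degree over $F$, so $\res{K/F} \aSL_2$ is almost $F$-simple. Finally, isotropy of $\aH$ over $F$ was established inside the proof of Theorem~\ref{TrialityHasSL2} (a maximal $F$-split torus of $\aG$ sits inside $\aH$).

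The only point that requires thought is the local rank at each $v \in \SG$. For an archimedean place $v$ of $F$, decompose $K \otimes_F F_v \cong \prod_{w \mid v} K_w$; then
\[
\locrank \aH \;=\; \fieldrank{F_v}\bigl(\res{K/F}\aSL_2\bigr) \;=\; \sum_{w \mid v} \fieldrank{F_v}\bigl(\res{K_w/F_v}\aSL_2\bigr) \;=\; \#\{\, w \mid v \,\},
\]
since each $\res{K_w/F_v}\aSL_2$ contributes one to the $F_v$-rank. The degree equation $\sum_{w \mid v}[K_w:F_v] = 4$, combined with $[K_w:F_v] \in \{1,2\}$, forces at least two places $w$ above every archimedean place of $F$. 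Hence $\locrank \aH \ge 2$ at every $v \in \SG$, and the existence of $\aH$ contradicts the minimality of $\aG$. The genuinely delicate work is all housed in Theorem~\ref{TrialityHasSL2}; once that is in hand, the corollary reduces to the short pigeonhole argument above.
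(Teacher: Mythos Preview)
Your argument is correct and is exactly the intended derivation: the paper states the corollary without proof, treating it as immediate from Theorem~\ref{TrialityHasSL2}, and your pigeonhole computation $\sum_{w\mid v}[K_w:F_v]=4$ with $[K_w:F_v]\le 2$ at archimedean~$v$ is precisely the implicit step needed to conclude $\locrank\aH\ge 2$. Nothing further is required.
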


\section{Groups of type $\out{1,2}E6$} \label{E6Sect}

We assume, in this section, that $\aG$ is of type~$E_6$. By Lemma~\ref{FrankMust1}, we may also assume $\Frank \aG = 1$. Then there are only two possibilities for~$\aG$ in the
Tits Classification \cite[pp.~58--59]{[T66]}:
 $$
\begin{array}{ccc}
 \out2E{6,1}^{29}& &\out2E{6,1}^{35} \\
\begin{picture}(70,50)(00,-20)
\put(05,02){\line(1,0){20}} \put(65,02){\oval(80,14)[l]}
\put(05,02){\circle*{5}} \put(25,02){\circle*{5}}
\put(65,-5){\circle*{5}} \put(65,9){\circle*{5}}
\put(45,9){\circle*{5}} \put(45,-5){\circle*{5}}
\put(65,02){\oval(10,25)} \put(-5,14){$\alpha_2$}
\put(20,14){$\alpha_4$} \put(40,14){$\alpha_3$}
\put(62,20){$\alpha_1$} \put(62,-20){$\alpha_6$}
\put(40,-16){$\alpha_5$}
\end{picture}
& \qquad &
\begin{picture}(65,50)(00,-20)
\put(05,02){\line(1,0){20}} \put(65,02){\oval(80,14)[l]}
\put(05,02){\circle*{5}} \put(25,02){\circle*{5}}
\put(65,-5){\circle*{5}} \put(65,9){\circle*{5}}
\put(45,9){\circle*{5}} \put(45,-5){\circle*{5}}
\put(05,02){\circle{10}} \put(-5,14){$\alpha_2$}
\put(20,14){$\alpha_4$} \put(40,14){$\alpha_3$}
\put(62,14){$\alpha_1$} \put(62,-16){$\alpha_6$}
\put(40,-16){$\alpha_5$}
\end{picture}
\end{array}
$$
(We number the simple roots as in \cite[p.~230]{[B68]}.)

 The two possible forms will be considered individually
 (in Theorems~\ref{E6(D4kernel)} and~\ref{E6(A5kernel)}).
 The proofs assume somewhat more background than those in previous sections.
 
\begin{thm} \label{E6(D4kernel)}
If\/ $\aG$ is a simply connected, absolutely almost simple $F$-group of type $\out2E{6,1}^{29}$, then\/ $\aG$ contains an isotropic, simply connected, absolutely almost simple $F$-subgroup\/~$\aH$ of type $\out2A5$, such that $\locrank \aH \ge 2$, for every archimedean place~$v$ of~$F$.
\end{thm}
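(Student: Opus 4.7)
\emph{Proof plan.} My approach mirrors the $F_4$ argument of the preceding section. From the Tits diagram of $\out2E{6,1}^{29}$, the group $\aG$ has $F$-rank~$1$ with distinguished orbit $\{\alpha_1,\alpha_6\}$ and anisotropic kernel $\aM$ of type $\out2D4$ supported on the simple roots $\alpha_2,\alpha_3,\alpha_4,\alpha_5$; the $*$-action fixes $\alpha_2$ and $\alpha_4$ while swapping $\alpha_1\leftrightarrow\alpha_6$ and $\alpha_3\leftrightarrow\alpha_5$. Deleting the branch node $\alpha_2$ leaves the subdiagram~$A_5$, which is $*$-stable with induced action the nontrivial outer involution. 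So I would take $\aH$ to be the standard subgroup of $\aG$ generated by the root subgroups $U_{\pm\alpha_i}$ for $i\in\{1,3,4,5,6\}$, built with respect to a carefully chosen maximal $F$-torus of $\aG$.

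By Weak Approximation together with the Hasse principle (following the template of the $F_4$ proof), produce a quadratic extension $L/F$ that splits $\aG$ and satisfies $L\subset F_v$ for every archimedean place~$v$ of~$F$. Since $-1$ belongs to the Weyl group of $D_4$ (though not to the Weyl group of $E_6$), one can apply Proposition~\ref{Malpha} to $\aM$ at the $*$-fixed simple root~$\alpha_4$ to obtain a maximal $F$-torus $\aT$ of $\aM$ that is split over~$L$; then $\aT\cdot\aS$ is a maximal $F$-torus of~$\aG$. With respect to a Borel of $\aG$ containing $\aT\cdot\aS$ and compatible with the Borel of $\aM$ chosen in the proof of \ref{Malpha}, the argument of \ref{Malpha} carries over: each Galois automorphism either preserves all positive roots of $\aM$ or swaps them with their negatives, and combined with $*$-stability of the set $\{\alpha_1,\alpha_3,\alpha_4,\alpha_5,\alpha_6\}$ this yields that the $A_5$-subsystem is Galois-stable (up to sign). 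Hence $\aH$ is defined over~$F$.

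The remaining properties follow routinely. The Dynkin diagram of $\aH$ is the connected graph $A_5$ with the nontrivial outer $*$-action, so $\aH$ is absolutely almost simple of type $\out2A5$; Remark~\ref{WhichSC} yields simple connectivity because no $E_6$-root outside the $A_5$-subsystem lies in the $\rational$-span of $\{\alpha_1,\alpha_3,\alpha_4,\alpha_5,\alpha_6\}$. For isotropy over~$F$, let $P$ denote the minimal $F$-parabolic of $\aG$ associated to~$\aS$; its unipotent radical consists of root subgroups $U_\alpha$ with $\alpha|_\aS>0$. Since $\alpha_1,\alpha_6\not\in\aM$, the product $U_{-\alpha_1}\cdot U_{-\alpha_6}$ is an $F$-defined subgroup of $\aH$ (the set $\{-\alpha_1,-\alpha_6\}$ is $*$-stable and the two root subgroups commute), and it is not contained in~$P$. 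Hence $\aH\cap P$ is a proper $F$-parabolic of~$\aH$, so $\aH$ is $F$-isotropic. For the local rank, since $L$ splits~$\aG$ and $L\subset F_v$, the torus $\aT\cdot\aS$ is split over~$F_v$ and $\aH\otimes F_v$ is a standard subgroup of the split $F_v$-group $\aG\otimes F_v$ with respect to a split torus, hence itself split, giving $\locrank\aH=5\ge2$.

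The main obstacle I anticipate is the second step: even though Proposition~\ref{Malpha} applies cleanly inside $\aM$, one must verify that the Weyl-twist in the Galois action on the non-quasi-split torus $\aT\cdot\aS$ preserves the full $A_5$-root subsystem (not merely the simple roots of $\aM$). This is where the specific arithmetic of the Tits index $\out2E{6,1}^{29}$ enters, and it is the point where the absence of $-1$ in $W(E_6)$ forces one to exploit the $*$-stability of $\{1,3,4,5,6\}$ rather than invoking a global sign. The first step (producing~$L$) and the remaining verifications are direct extensions of the $F_4$ methodology.
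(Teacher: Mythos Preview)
Your second step fails in general, and this is a genuine gap rather than a technicality. You ask for a quadratic extension $L/F$ that \emph{splits} $\aG$ and satisfies $L\subset F_v$ for every archimedean place~$v$. In the $F_4$ case this worked because an isotropic $F_4$ of local rank $\ge 2$ is automatically split. But $\aG$ here is an outer form of~$E_6$: if $K$ is the quadratic extension making $\aG$ inner and $v$ is a real place with $K\not\subset F_v$, then $\aG$ is still of outer type over $F_v=\real$ and hence is \emph{not} split over~$\real$. No quadratic $L$ with $L\subset F_v$ can then split~$\aG$ (since $L\otimes_F F_v\iso F_v\oplus F_v$). The paper's own proof explicitly treats the case ``$\aG$ outer over $F_v$'' (its Case~3), so this situation genuinely occurs. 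Your concluding argument that $\aH$ is split over each $F_v$ therefore collapses.

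There is also a structural issue with the torus. The torus $\aT\cdot\aS$ you write down has rank $4+1=5$, not~$6$; a maximal torus of $\aG$ is $\aT\cdot\aR$ with $\aR$ the rank-$2$ central torus of $\aC_\aG(\aS)$, and $\aR$ is isogenous to $\res{K/F}\Gmult$. So even if $L$ splits $\aT$, the Galois action on the roots of~$\aG$ with respect to $\aT\cdot\aR$ still involves the action on~$\aR$, which is governed by~$K$, not by~$L$. Your ``sign-flip'' description of the Galois action, borrowed from the proof of Proposition~\ref{Malpha}, applies only to the roots of~$\aM$; it does not control how Galois moves $\alpha_1,\alpha_6$ relative to the roots of~$\aM$, and hence does not establish that your $A_5$ subsystem $\{\alpha_1,\alpha_3,\alpha_4,\alpha_5,\alpha_6\}$ is Galois-stable.

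The paper takes a rather different route. It first identifies $\aM$ concretely as $\aSpin_8(f)$ for a quadratic form~$f$ on~$F^8$ (via a Galois-cohomology argument), then uses a \emph{totally imaginary} quadratic extension~$L$ (so $L\not\subset F_v$ at real places---the opposite of what you want) to force $\aG$, hence~$\aM$, to be quasisplit over~$L$. This lets one diagonalize~$f$ so that a $4$-dimensional subform~$f'$ is a Pfister form, whence $\aSpin_4(f')\iso\aSL_1(D)\times\aSL_1(D)$; one factor $\aM_1$ can be identified with the standard subgroup $\aG_{\alpha_2}$. The group $\aH$ is then taken to be the identity component of $\aC_\aG(\aM_1)$, which is automatically defined over~$F$ and is a \emph{different} $A_5$ subsystem (containing $\alpha_2+\alpha_3+2\alpha_4+\alpha_5$ in place of your $\alpha_4$). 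Because $\aH$ need not be split at every archimedean place, the verification that $\locrank\aH\ge 2$ then requires a genuine case analysis (inner vs.\ outer over $F_v$, $f'$ isotropic vs.\ anisotropic).
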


Before proving this theorem, we recall the following result (and, for completeness, we provide a self-contained proof based on Galois cohomology). It does not require our standing assumption that $F$ is an algebraic number field.

\begin{lem}[{\cite[Rem.~2.10]{GaribaldiPetersson}}] \label{anis(E6)=spin}
If\/ $\aG$ is an absolutely almost simple $F$-group of type $\out2E{6,1}^{29}$, then the semisimple anisotropic kernel of\/~$\aG$ is isomorphic to\/ $\aSpin_8(f)$, for some quadratic form~$f$ on~$F^8$ with nontrivial discriminant.
\end{lem}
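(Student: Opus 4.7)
The plan is to read the structure of~$\aM$ from the Tits index and then use Galois cohomology to identify it as a concrete orthogonal form. From the Tits diagram of~$\out2E{6,1}^{29}$, the distinguished $*$-orbit of simple roots is $\{\alpha_1, \alpha_6\}$, so the Dynkin diagram of the semisimple anisotropic kernel~$\aM$ is the sub-diagram on $\{\alpha_2, \alpha_3, \alpha_4, \alpha_5\}$, which is a $D_4$ diagram with $\alpha_4$ as the trivalent vertex and $\alpha_2, \alpha_3, \alpha_5$ as the three endpoints. Passing to the simply connected cover reduces us to the case where $\aG$ is simply connected, in which case $\aM$ is also simply connected (since the derived group of the centralizer of a torus in a simply connected group is simply connected, by Steinberg).

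Next I would pin down the $*$-action of $\Gal(\overline{F}/F)$ on~$\aM$. Because $\aG$ is of outer type, the $*$-action on the simple roots of~$\aG$ factors through $\Gal(L/F) \cong \integer/2\integer$ for some quadratic extension $L/F$, and the nontrivial element acts as the unique order-$2$ diagram automorphism of~$E_6$, swapping $\alpha_1 \leftrightarrow \alpha_6$ and $\alpha_3 \leftrightarrow \alpha_5$ while fixing $\alpha_2$ and~$\alpha_4$. Restricted to the sub-diagram of~$\aM$, this is the order-$2$ automorphism of~$D_4$ that swaps the two endpoints $\alpha_3$ and~$\alpha_5$ while fixing the third endpoint~$\alpha_2$ and the trivalent vertex~$\alpha_4$. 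In particular, the $*$-action is nontrivial (so $\aM$ is of type $\out2D4$, not inner), but its image in $\mathrm{Aut}(\mathrm{Dyn}(D_4)) = S_3$ has order~$2$, so $\aM$ is \emph{not} a trialitarian form.

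To finish, I would identify $\aM$ cohomologically. Since the $*$-action fixes the endpoint~$\alpha_2$, the cocycle describing $\aM$ as a twist of the split $\aSpin_8$ lies in the subgroup of $\mathrm{Aut}(\aSpin_8)$ fixing~$\alpha_2$. Using the split exact sequence $1 \to \aSO_8 \to \mathbf{O}_8 \to \integer/2\integer \to 1$ of split groups, this subgroup lifts (modulo the central $\mu_2$ coming from the simply connected cover) to the natural image of $\mathbf{O}_8^{\mathrm{split}}$, so twists are classified by $H^1\bigl(F, \mathbf{O}_8^{\mathrm{split}}\bigr)$, which parametrizes non-degenerate $8$-dimensional quadratic forms over~$F$ up to isometry. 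Hence $\aM \cong \aSpin_8(f)$ for some such~$f$, and the nontriviality of the $*$-action translates exactly to the nontriviality of $\mathrm{disc}(f)$ in $F^\times/(F^\times)^2$ (with $L = F\bigl[\sqrt{\mathrm{disc}(f)}\,\bigr]$).

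The main obstacle is the last step: one must carefully set up the Galois cohomology of $\mathrm{Aut}(\aSpin_8)$ so that $\aM$ is indeed the Spin cover of a genuine $8$-dimensional orthogonal form (rather than some form twisted by a trialitarian cocycle), and match the discriminant invariant of~$f$ with the quadratic extension~$L$ controlling the outer-type twist of~$\aG$. Everything else is a direct reading of the Tits index together with the classical fact that the $D_4$ diagram has three endpoints permuted by the $S_3$ of diagram automorphisms.
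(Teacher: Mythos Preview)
Your reading of the Tits index is correct: the anisotropic kernel~$\aM$ is simply connected of type~$D_4$, and the $*$-action is the order-$2$ diagram automorphism swapping $\alpha_3 \leftrightarrow \alpha_5$ while fixing~$\alpha_2$. But the final cohomological step has a genuine gap. The subgroup of $\mathrm{Aut}(\aSpin_8)$ whose image in $\mathrm{Out}(\aSpin_8)\cong S_3$ lands in the stabilizer of~$\alpha_2$ is $\mathrm{PGO}_8$, not (the image of) $\mathbf{O}_8$; these differ exactly by the center, and that difference is not cosmetic. Twists classified by $H^1(F,\mathrm{PGO}_8)$ correspond to central simple algebras of degree~$8$ with orthogonal involution, so from the $*$-action alone you only obtain $\aM \cong \aSpin(A,\sigma)$ for some such $(A,\sigma)$. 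The algebra~$A$ need not be split: there exist degree-$8$ division algebras with orthogonal involution (and nontrivial discriminant), giving groups of type $\out2D4$ that are \emph{not} $\aSpin_8(f)$ for any quadratic form. Your worry about ruling out trialitarian cocycles is already handled; the real obstacle is ruling out a nontrivial Tits algebra for~$\aM$, and your argument does not address it.

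The paper closes this gap by exploiting the embedding of~$\aM$ in~$\aG$, not by analyzing~$\aM$ intrinsically. Writing $\aG = {}^{\xi}\aG^q$ with $\aG^q$ quasisplit, one first pushes~$\xi$ into the Levi $\aR^q\aM^q$ of the appropriate parabolic (standard), and then observes that the central torus~$\aR^q$ meets~$\aM^q$ in the \emph{entire} center of~$\aM^q$, so that $\aR^q\aM^q/\aM^q \cong \aR^q \cong \res{K/F}\Gmult$ has trivial~$H^1$. Hence~$\xi$ can be taken with values in~$\aM^q \cong \aSpin_8(f_0)$ itself, and twisting a spin group by an inner cocycle with values in the spin group keeps the underlying algebra split. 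That computation---that $\aR^q\cap\aM^q$ is the full center of~$\aM^q$---is precisely what forces the Tits algebra of~$\aM$ to be trivial, and it is the step your outline is missing.
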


\begin{proof}
There is no harm in assuming $\aG$ is adjoint.
Let 
\begin{itemize}
	\item $K$ be the (unique) quadratic extension of~$F$ over which
$\aG$ becomes inner, 
	\item $\aG^q$ be a quasisplit, absolutely almost
simple, adjoint $F$-group of type $\out2E6$ that splits over~$K$ 
(so the Tits index of~$\aG^q$ is the diagram on the right in \pref{TitsIndex2E6overImag} below),
	\item $\aP$ be a minimal parabolic $F$-subgroup of~$\aG$,
	\item $\aP^q$ be a parabolic $F$-subgroup of~$\aG^q$ that is of the same type as~$\aP$ (so the semisimple part of~$\aP^q$ is the standard subgroup generated by the roots $\pm\alpha_2,\pm\alpha_3,\pm\alpha_4,\pm\alpha_5$),
	\item $\aR^q\aM^q$ be the the Levi subgroup of~$\aP^q$, where $\aR^q$ is its central torus and $\aM^q$ is its semisimple part,
	and
	\item $\xi\in Z^1(F,\aG^q)$, such that $\aG$ is (isomorphic to) the twisted group ${^{\xi}}\aG^q$.
\end{itemize}
\setcounter{step}{0}

\begin{step} \label{xiInM0}
The class of $\xi$ is in the image of the map $H^1(F,\aM^q)\to 
H^1(F,\aG^q)$.
\end{step}
It is well known that the image of the map
$H^1(F,\aP^q)\to H^1(F,\aG^q)$ consists of the classes of the
$1$-cocycles $\eta$ with the property that the twisted group $^{\eta}\aG^q$ has a
parabolic $F$-subgroup  of the same type as~$\aP^q$.
Thus, we may assume that $\xi\in Z^1(F,\aP^q)$. 
Then, since the unipotent radical of $\aP^q$ has trivial 
cohomology in dimension~$1$,
we may assume 
	$$ \xi\in Z^1(F,\aR^q \aM^q) .$$

Since the center of the universal cover of~$\aM^q$ has order~$4$, 
and the center of the universal cover of~$\aG^q$ has order~$3$, which is
relatively prime, we know that $\aM^q$ is simply connected. 
It is easy to check that $\aR^q$ is of the form
$\res{K/F}(\Gmult)$, and that $\aR^q\cap \aM^q$ is the entire center of~$\aM^q$,
which is isomorphic to $(\integer/2\integer)\times (\integer/2\integer)$, 
so
$\aR^q \cap \aM^q$ is precisely the $2$-torsion part ${_2\aR^q}$ of $\aR^q$; hence
	$$
	\frac{\aR^q \aM^q}{\aM^q} 
	\iso \frac{\aR^q}{\aR^q\cap \aM^q} 
	= \frac{\aR^q}{{_2\aR^q}} 
	\iso \aR^q
	\iso \res{K/F}(\Gmult)
	. $$
Therefore 
	$$ H^1 \left( F, \frac{\aR^q \aM^q}{\aM^q} \right) 
	\iso H^1\bigl( F,\res{K/F}(\Gmult) \bigr)  
	\iso H^1(K,\Gmult) 
	= 0 
	.$$
Since  $\xi\in Z^1(F,\aR^q \aM^q)$, the desired conclusion now follows from the exact sequence
	$$ H^1(F,\aM^q)
	\to H^1(F,\aR^q \aM^q) 
	\to H^1 \left( F, \frac{\aR^q \aM^q}{\aM^q} \right) 
	.$$

\begin{step}
Completion of the proof.
\end{step}
Recall that~$\aM^q$ is the standard subgroup of~$\aG^q$ generated by the roots 
$\pm\alpha_2,\ldots,\pm\alpha_5$. Thus, $\aM^q$ is a simply connected, 
quasisplit group of type 
$\out2D4$, so it is $F$-isomorphic to $\aSpin_8(f_0)$,
 where $f_0$ is a quasisplit quadratic form on~$F^8$. From Step~\ref{xiInM0},  
 we may assume that 
 	$$\xi\in Z^1(F,\aM^q) = Z^1 \bigl( F, \aSpin_8(f_0) \bigr)  .$$
Then the semisimple
$F$-anisotropic kernel~$\aM$ of~$\aG$ is the
twisted group 
	$${^\xi}\aM^q = {^\xi}{\aSpin_8(f_0)} \iso \aSpin_8(f) ,$$
for some quadratic form~$f$ on~$F^8$.
\end{proof}

\begin{proof}[{}{\bf Proof of Theorem~\ref{E6(D4kernel)}}]
Let 
\begin{itemize}
\item $\aS$ be a $1$-dimensional $F$-split torus in~$\aG$,
\item $\aM$ be the corresponding semisimple $F$-anisotropic kernel, so we may identify $\aM$ with $\aSpin(f)$, for some quadratic form~$f$ on~$F^8$ with nontrivial discriminant \see{anis(E6)=spin},
\item $K$ be the (unique) quadratic extension of~$F$ over which $\aG$ becomes a group of inner type,
\item $L$ be a totally imaginary quadratic extension of~$F$ (such that $L \neq K$),
\item $a \in F$, such that $L = F[\sqrt{a}]$, 
and
\item $\aR$ be the central torus in the reductive group $\aC_\aG(\aS)$, so $\aC_\aG(\aS) = \aR\aM$ is an almost-direct product, and $\aR$ is isogenous to $\res{K/F} \Gmult$.
\end{itemize}

Since $L \neq K$, we know that $\aG$ remains outer over~$L$. 
It is well known \cite[p.~385]{[PR]} that there are only two possibilities for the Tits index of a group of type $\out2E6$ over a totally imaginary number field:
 \begin{equation} \label{TitsIndex2E6overImag}
\begin{array}{ccc}
\begin{picture}(65,50)(00,-20)
\put(05,02){\line(1,0){20}} \put(65,02){\oval(80,14)[l]}
\put(05,02){\circle*{5}} \put(25,02){\circle*{5}}
\put(65,-5){\circle*{5}} \put(65,9){\circle*{5}}
\put(45,9){\circle*{5}} \put(45,-5){\circle*{5}}
\put(05,02){\circle{10}} \put(25,02){\circle{10}} 
\put(-5,14){$\alpha_2$}
\put(20,14){$\alpha_4$} \put(40,14){$\alpha_3$}
\put(62,14){$\alpha_1$} \put(62,-16){$\alpha_6$}
\put(40,-16){$\alpha_5$}
\end{picture}
& \qquad &
\begin{picture}(70,50)(00,-20)
\put(05,02){\line(1,0){20}} \put(65,02){\oval(80,14)[l]}
\put(05,02){\circle*{5}} \put(25,02){\circle*{5}}
\put(65,-5){\circle*{5}} \put(65,9){\circle*{5}}
\put(45,9){\circle*{5}} \put(45,-5){\circle*{5}}
\put(05,02){\circle{10}} \put(25,02){\circle{10}} 
\put(45,02){\oval(10,25)} \put(65,02){\oval(10,25)} 
\put(-5,14){$\alpha_2$}
\put(20,14){$\alpha_4$} \put(40,20){$\alpha_3$}
\put(62,20){$\alpha_1$} \put(62,-20){$\alpha_6$}
\put(40,-20){$\alpha_5$}
\end{picture}
\end{array}
\end{equation}
Furthermore, since the roots $\alpha_1$ and~$\alpha_6$ are circled in the Tits index
over~$F$, they must also be circled in the Tits index over~$L$.
Therefore, $\aG$ is quasisplit over~$L$; this means that $\aM$ is quasisplit over~$L$.
Hence, after a change of basis to diagonalize the form appropriately, we may write
	$$ f = \langle a_1, -a_1 a, a_2, -a_2 a, a_3, -a_3 a, b_1, b_2 \rangle .$$
Let $f' = \langle a_1, -a_1 a, a_2, -a_2 a \rangle$ be the restriction of~$f$ to the first $4$ coordinates.
By normalizing, we may assume $a_1 = 1$. Then $f'$ is the norm form of the quaternion algebra $D = (a,-a_2)_F$. (In other words, $f'$ is the 2-fold Pfister form $\langle 1,-a\rangle \otimes \langle 1,a_2 \rangle$.) Hence, 
	\begin{equation} \label{SO(f')=SL1DxSL1D}
	\aSpin_4(f') \iso \aSL_1(D) \times \aSL_1(D) 
	. \end{equation}
Let $\aM_1$ and $\aM_2$ be the two simple factors of~$\aSpin_4(f')$. 

Writing $f = f' \oplus f''$, we see that $\aM_1$ is normalized by $\aSpin_4(f') \cdot \aSpin_4(f'')$, which contains a maximal torus of~$\aM$. So $\aM_1$ is a standard subgroup. Since all roots of~$\aM$ are conjugate under the Weyl group, we may assume $\aM_1 = \aG_{\alpha_2}$ is the standard subgroup generated by the roots $\pm\alpha_2$.

Let $\aH$ be the identity component  of $\aC_\aG(\aM_1)$. 
Because $\aM_1$ is defined
over~$F$, we know that $\aH$ is defined over~$F$.
Furthermore, since $\aM_1 = \aG_{\alpha_2}$, it is easy to see that $\aH$ is the standard subgroup
of~$\aG$ generated by the roots 
	$$ \pm\alpha_3 , \pm\alpha_1,
	\pm(\alpha_2 + \alpha_3 + 2\alpha_4 + \alpha_5),
	\pm\alpha_6, \pm\alpha_5 .$$
Thus, $\aH$ is semisimple and simply connected \see{WhichSC}, and is of type $\out2A5$. Also, since $\aH$
contains the $F$-split torus $\aS$, we know that $\aH$ is $F$-isotropic. 

All that remains is to show, for every archimedean place~$v$ of~$F$, that $\locrank \aH \ge 2$.

\setcounter{case}{0}

\begin{case}
Assume $\aG$ is inner over~$F_v$.
\end{case}
This assumption implies $K \subset F_v$. Since $\aH$ contains the $2$-dimensional torus~$\aR$, which splits over~$K$ (because it is isogenous to~$\res{K/F} \Gmult$), we have $\locrank \aH \ge 2$.

\begin{case}
Assume $f'$ is isotropic over~$F_v$.
\end{case}
Since $\aSO_4(f')$ is isotropic, and its two simple factors $\aM_1$ and $\aM_2$ are isogenous \see{SO(f')=SL1DxSL1D}, we see that $\aM_2$ is isotropic. Since $\aM_2$ centralizes $\aM_1$, and is contained in~$\aM$, we see that $\aM_2$ is contained in the $F$-anisotropic kernel of~$\aH$. Thus, the $F$-anisotropic kernel of~$\aH$ is isotropic over~$F_v$; so $\locrank \aH \ge 2$.

\begin{case}
The remaining case.
\end{case}
Recall that $f = f' \oplus f''$. Since $f'$ is anisotropic over~$F_v$, we must have $F_v \iso \real$, and we may assume all of the coefficients of $f' = \langle a_1, -a_1 a, a_2, -a_2 a \rangle$ are positive in~$F_v$. 

Since $\aG$ is isotropic over~$F_v$ (indeed, it has been assumed to be isotropic over~$F$), we see, from the Tits Classification \cite[pp.~58--59]{[T66]} of real forms of~$E_6$, that $\locrank \aG \ge 2$. Hence, $\aM$ is isotropic over~$F_v$, so $f$ is isotropic over~$F_v$. Thus, some coefficient of $f''$ must be negative. On the other hand, because $\aG$ is outer over~$F_v$, we know that the discriminant of~$f$ is not~$1$, so the coefficients of~$f''$ cannot all be negative. Thus, $f''$ has both positive and negative coefficients, so $\aSpin_4(f'')$ is isotropic over~$F_v$. Since $\aSpin_4(f'')$ obviously centralizes $\aSpin_4(f') \supset \aM_1$, and is contained in~$\aM$, we see that $\aSpin_4(f'')$ is contained in the $F$-anisotropic kernel of~$\aH$. Thus, the $F$-anisotropic kernel of~$\aH$ is isotropic over~$F_v$; so $\locrank \aH \ge 2$.
\end{proof}

\begin{thm} \label{E6(A5kernel)}
If\/ $\aG$ is an absolutely almost simple $F$-group of type $\out2E{6,1}^{35}$, then\/ $\aG$ contains an isotropic, simply connected, absolutely almost simple $F$-subgroup~$\aH$ of type $\out3D4$ or $\out6D4$.
\end{thm}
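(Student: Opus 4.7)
The plan is to construct $\aH$ as a standard subgroup of $\aG$ relative to a carefully chosen maximal $F$-torus, with the Galois action realizing a triality permutation on a $D_4$ subsystem of the $E_6$ root system. The approach parallels Theorem~\ref{E6(D4kernel)} but is technically harder: triality requires a Galois-induced permutation of three roots of order divisible by $3$, and since the $*$-action of $\out2E6$ has orbits of size at most $2$, this $3$-cycle must be produced by a Weyl-group element arising from a cubic extension of~$F$.

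I would start by setting up standard data: let $\aS$ be a maximal $F$-split torus of $\aG$ (rank $1$), let $\aM$ be the semisimple $F$-anisotropic kernel of $\aG$ (of type $\out2A5$), and let $K$ be the quadratic extension of $F$ over which $\aG$ becomes inner. Inside the $E_6$ root system, the simple roots $\alpha_2,\alpha_3,\alpha_4,\alpha_5$ generate a $D_4$ subsystem with trivalent vertex $\alpha_4$ and arms $\alpha_2,\alpha_3,\alpha_5$; this will be the target. The cubic input comes from the algebraic structure of $\aM$: writing $\aM$ as isogenous to a special unitary group $\aSU(B,\tau)$, where $B$ is a central simple $K$-algebra of degree $6$ with involution $\tau$ of the second kind over $K/F$, I would locate a $\tau$-stable étale cubic $K$-subalgebra $L\subset B$; its fixed subring $L_0=L^{\tau}$ is then an étale cubic $F$-algebra, and the norm-one torus $\res{L_0/F}^{(1)}\Gmult$ embeds in $\aM$ as a $2$-dimensional $F$-anisotropic subtorus. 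Together with $\aS$, this generates a maximal $F$-torus $\aT$ of $\aG$ whose Galois action on $X^{*}(\aT)$ incorporates an order-$3$ element cyclically permuting three pairwise orthogonal roots of $E_6$, which by Weyl equivalence may be identified with the arms $\alpha_2,\alpha_3,\alpha_5$ of the chosen $D_4$ subsystem.

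Finally, let $\aH$ be the standard subgroup of $\aG$ generated by the root groups for $\pm\alpha_2,\pm\alpha_3,\pm\alpha_4,\pm\alpha_5$ relative to $\aT$. Since its root system is Galois-stable by construction, $\aH$ is defined over $F$; by Remark~\ref{WhichSC} (using that $E_6$ is simply laced, so no additional long roots intervene), $\aH$ is simply connected; and the induced Galois action identifies $\aH$ as an absolutely almost simple $F$-group of type $\out3D4$ or $\out6D4$. Isotropy of $\aH$ over $F$ follows by ensuring that, after possibly conjugating by an element of $\aG(F)$, a nontrivial $F$-split subtorus of $\aG$ lies in $\aH$. \textbf{The main obstacle} is producing the $\tau$-stable étale cubic $K$-subalgebra of $B$ and verifying that the resulting cubic Galois action on $X^{*}(\aT)$ corresponds precisely to the arms of a $D_4$ subsystem of $E_6$, rather than to an arbitrary triple of orthogonal roots of $\aM$. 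This rests on a local-global analysis of cubic subalgebras in central simple algebras with involutions of the second kind over number fields, together with a careful cohomological compatibility check between the $F$-structure of $\aT$ and the root-theoretic data of $\aG$.
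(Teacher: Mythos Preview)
Your strategy—extract a cubic extension from the $\out2A5$ anisotropic kernel to force triality on a $D_4$ subsystem—matches the paper's, but your choice of $D_4$ cannot work. You take the subsystem on $\alpha_2,\alpha_3,\alpha_4,\alpha_5$ (trivalent vertex $\alpha_4$, arms $\alpha_2,\alpha_3,\alpha_5$) and want Galois to cycle the three arms. However, you also place $\aS$ inside your maximal torus $\aT$, so the restriction $X^*(\aT)\to X^*(\aS)$ is Galois-equivariant with trivial action on the target. In the index $\out2E{6,1}^{35}$ the root $\alpha_2$ is the circled one and hence has nonzero restriction to $\aS$, whereas $\alpha_3,\alpha_5$ lie in the anisotropic kernel and restrict to zero; no Galois element can therefore send $\alpha_2$ to $\pm\alpha_3$ or $\pm\alpha_5$, and the required $3$-cycle is impossible regardless of which cubic subalgebra of $B$ you produce. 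For the same reason $\aS\not\subset\aH$: the cocharacter generating $\aS$ is the fundamental coweight $\omega_2^\vee$, which under the simply-laced identification equals the highest root $-\mu=\alpha_1+2\alpha_2+2\alpha_3+3\alpha_4+2\alpha_5+\alpha_6$ and so is not in the span of $\alpha_2^\vee,\alpha_3^\vee,\alpha_4^\vee,\alpha_5^\vee$. Thus your isotropy argument is also missing.

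The paper instead uses the $D_4$ with trivalent vertex $\alpha_2$ and arms the three pairwise orthogonal roots $\beta_1=\alpha_1{+}\alpha_3{+}\alpha_4$, $\beta_3=\alpha_3{+}\alpha_4{+}\alpha_5$, $\beta_4=\alpha_4{+}\alpha_5{+}\alpha_6$ lying entirely in the $A_5$ kernel. After proving $\aM\cong\aSU_2(D,f,\tau)$ with $D$ a \emph{cubic} division algebra over $K$ (this requires an argument), one picks $d\in D$ with $\tau(d)=d$ and $d\notin K$, so $L=F[d]$ is cubic over $F$; then $\aM'=\aSU_2(K[d],f,\tau|_{K[d]})\cong\res{L/F}\aSL_1(T)$ is exactly the standard subgroup on $\pm\beta_1,\pm\beta_3,\pm\beta_4$. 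All $\beta_i$ restrict trivially to $\aS$, so Galois \emph{can} permute them via the cubic extension; and since $-\mu=2\alpha_2+\beta_1+\beta_3+\beta_4$ is the highest root of this $D_4$, one gets $\aS\subset\aG_\mu\subset\aH$ and isotropy for free. That same identity, via the argument of \eqref{TrialityPf-Phi(alpha2)}, also shows the root set of $\aH$ is Galois-stable.
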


\begin{proof}
We fix:
\begin{itemize}
\item a maximal $F$-split torus~$\aS$ of~$\aG$,
\item a maximal $F$-torus~$\aT$ that contains~$\aS$,
and
\item an ordering of the roots of~$\aG$ (with respect to the maximal torus~$\aT$). 
\end{itemize}
Let 
	$$ \mu = -(\alpha_1 + 2 \alpha_2 + 2 \alpha_3 + 3 \alpha_4 + 2 \alpha_5 + \alpha_6) , $$
be the minimal root of $\aG$, so $\aG$ has the following extended Tits index.
$$
\begin{picture}(105,44)(-35,-17)
\put(-30,-2){$\times$}
\dashline{5,1}(-23,1)(0,1)
\put(00,01){\line(1,0){20}}
\put(40,01){\oval(40,14)[l]}
\put(40,8){\line(1,0){20}}
\put(40,-6){\line(1,0){20}}
\put(00,01){\circle*{5}}
\put(20,01){\circle*{5}}
\put(40,-6){\circle*{5}}
\put(40,8){\circle*{5}}
\put(60,8){\circle*{5}}
\put(60,-6){\circle*{5}}
\put(0,01){\circle{12}}
\put(-30,14){$\mu$}
\put(-4,14){$\alpha_2$}
\put(15,14){$\alpha_4$}
\put(35,14){$\alpha_3$}
\put(55,14){$\alpha_1$}
\put(55,-16){$\alpha_6$}
\put(35,-16){$\alpha_5$}
\end{picture}
$$
The standard subgroup $\aG_\mu$ of $\aG$
generated by the roots~$\pm\mu$ is isomorphic to
$\aSL_2$ over $F$, so $\aS \subset \aG_\mu$.

We may assume $G$ is simply connected (because the center of the universal cover of~$G$ has order~$3$, which is relatively prime to the order of the center of any group of type~$D_4$).
Let
\begin{itemize}
\item $K$ be the (unique) quadratic extension of~$F$ over
which $\aG$ becomes a group of inner type,
and
\item $\aM=[\aC_\aG(\aS),\aC_\aG(\aS)]$ the semisimple $F$-anisotropic kernel of~$\aG$, so~$\aM$ is generated by the roots
$\{\pm\alpha_1,\pm\alpha_3,\pm\alpha_4,\pm\alpha_5,\pm\alpha_6\}$.
\end{itemize}
Therefore $\aM$ is of type $\out2A5$ and becomes inner over~$K$, so, as is well known \cite[Prop.~2.18, p.~88]{[PR]}, we have
	$$ \mbox{$\aM$ is $F$-isomorphic to $\aSU_m(D,f,\tau)$,}$$
where $D$ is a central division algebra of index $d = 6/m$ over~$K$, with involution~$\tau$ of the second kind, such that $F$ is the fixed field of the restriction of~$\tau$ to~$K$, and $f$ is a nondegenerate Hermitian form on~$D^m$.

\begin{claim}
$D$ is a cubic division algebra over~$K$ {\rm(}and $m = 2${\rm)}.
\end{claim}
(This is known, but we provide a proof for completeness.)
We know that $\aG$ is a twisted form $\aG={^{\xi}\aG^q}$
of a quasisplit, almost simple, simply connected $F$-group $\aG^q$
of type~$\out2E6$ splitting over~$K$, where $\xi$ is a $1$-cocycle with
coefficients in the adjoint group~$\overline{\aG}^q$. 
In fact, there is a $1$-dimensional $F$-split torus~$\aS^q$ of~$\aG^q$, such that we may take $\xi$ to have its values in $\aC_{\overline{\aG}^q}(\overline{\aS}^q)$ (cf.\ \cite[Prop.~6.19, p.~339]{[PR]}). Write $\aC_{\aG^q}(\aS^q) = \aS^q \aM^q$, where $\aM^q$ is semisimple. Now $H^1 \bigl(F, \aC_{\overline{\aG}^q}(\overline{\aS}^q)/\overline{\aM}^q \bigr) = 0$ (because the coefficient group is an $F$-split torus), so we may take $\xi$ to have its values in~$\overline{\aM}^q$. Therefore $\aM =  {^{\xi}\aM^q}$.
 
Let $\aZ$ be the center of $\aG^q$ (note that $\aZ$ is contained in $\aM^q$), and let $\partial \colon H^1(F,\overline{\aM}^q)\to H^2(F,\aZ)$ be the connecting morphism. There is a cubic extension~$E$ of~$F$, such that the image of~$\partial \xi$ in $H^2(E,\aZ)$ is trivial \cite[Prop.~6.14, p.~334]{[PR]}. This means that the image of~$\xi$ in $H^1(E,\overline{\aM}^q)$ lifts to an element of $H^1(E,\aM^q)$, so $\aM$ is isomorphic over~$E$ to $\aSU_6(K \cdot E, f', \tau')$, where $\tau'$ is the Galois automorphism of $K \cdot E$ over~$E$, and $f'$~is a Hermitian form on $(K \cdot E)^6$. Therefore, $D \otimes_K (K \cdot E)$ is a matrix algebra. So $D$ is either~$K$ or a cubic division algebra over~$K$.

To complete the proof of the claim, we need only show $D \neq K$. Assume the contrary. Then $\tau$ is the Galois automorphism of~$K$ over~$F$, $f$~is a Hermitian form on~$K^6$, and $\aM \iso \aSU_6(K,f,\tau)$. For any archimedean place~$v$ of~$F$, the Tits Classification \cite[pp.~58--59]{[T66]} implies 
	$$\locrank \aG > 1 = \Frank \aG ,$$
so $\locrank \aM \ge 1$; thus, $f$ is $F_v$-isotropic. Then, since any Hermitian form in $\ge 3$ variables is isotropic at every nonarchimedean place, the Hasse Principle tells us that $f$~is $F$-isotropic. This contradicts the fact that $\aM$ is the $F$-anisotropic kernel. This completes the proof of the claim.

\bigbreak

Choose a basis $\{e_1,e_2\}$ of~$D^2$ that is orthogonal with respect to~$f$. By making a change of coordinates, we may assume $e_1 = (1,0)$ and $e_2 = (0,1)$. Then, letting
	$$ \text{$d_1=f(e_1,e_1)$ and $d_2=f(e_2,e_2)$,} $$
we have 
	$$f(x_1,x_2) = \tau(x_1) \, d_1 \, x_1 + \tau(x_2) \, d_2 \, x_2 .$$
Let $d= d_1^{-1} d_2\in D$. Then Lemma~\ref{NormalizeHerm} implies that we may assume $d_1 = 1$ and $d_2 = d$ (by replacing $\tau$ with $\int(d_1^{-1}) \circ \tau$). That is,
	$$ f = \langle 1, d \rangle .$$
For convenience, let us identify $\aM$ with $\aSU_2(D,f,\tau)$.

It is not difficult to see that there exists $x \in D$, such that $\tau(x) \, x \notin F$. Thus,
for a generic choice of the orthogonal basis $\{e_1,e_2\}$ (or merely multiplying $e_2$ by a generic element of~$D$), we have $d \notin F$. Since $\tau(d) = d$, this implies $d \notin K$. Therefore
\begin{itemize}
\item $E=K[d]$ is a maximal subfield in~$D$ (so it is cubic over~$K$),
\item $E$ is stable under~$\tau$,
and
\item $L= F[d]$ is a subfield of~$D$ that is cubic over~$F$. 
\end{itemize}
Consider the subgroup $\aM'=\aSU_2(E,f,\tau|_E)$ of~$\aM$. 
Writing $K = F[\sqrt{a}]$, for some $a \in F$, and 
letting $T$ be the quaternion algebra $T=(a,-d)_F$ over~$F$, we have
	$$\aM'\iso \res{L/F} \bigl( \aSL_1(T) \bigr) .$$

Let $\overline {K}$ be an algebraic closure of~$K$. Then $D \otimes_K \overline{K} \iso \Mat3(\overline{K})$, and the isomorphism may be taken so that $E \otimes_K \overline{K}$ maps to the diagonal matrices. Then the algebra $\Mat2(E)$, viewed as a subalgebra of $\Mat2(D) \otimes_K \overline{K} \iso \Mat6(\overline{K})$,
consists of matrices of the form 
$$
\begin{bmatrix}
\ast & 0 & 0 & \ast & 0 & 0\\
0 &\ast & 0 & 0 &\ast &0\\
0 & 0 & \ast & 0 & 0 & \ast\\
\ast & 0 & 0 & \ast & 0 & 0\\
0 &\ast & 0 & 0 &\ast & 0\\
0 & 0 & \ast & 0 & 0 & \ast
\end{bmatrix}
.$$
Hence, $\aM'$ is the standard subgroup generated by the roots $\pm\beta_1,\pm\beta_3,\pm\beta_4$, where
$$
\beta_1 = \alpha_1+\alpha_3+\alpha_4,\ 
\beta_3 = \alpha_3+\alpha_4+\alpha_5,
\ \beta_4 = \alpha_4+\alpha_5+\alpha_6.
$$
Let $\aH$ be the subgroup of $\aG$ generated by $\aM'$ and
$\aG_{\alpha_2}$. One easily checks that $\aH$ has type $D_4$,
contains $\aG_\mu$, and is simply connected \see{WhichSC}.

We now verify that $\aH$ is defined over~$F$.
Let $\sigma$ be a Galois automorphism of~$\overline{F}$ over~$F$.
Since $\aM'$ is defined over~$F$, we know that the set
$\{\pm\beta_1,\pm\beta_3,\pm\beta_4\}$ of roots of~$\aM'$ is invariant 
under~$\sigma$. 
Then, since 
	$$ -\mu = 2 \alpha_2 + \beta_1 + \beta_3 + \beta_4 $$
and $\mu$ is fixed by~$\sigma$ (because $\aG_\mu\simeq \aSL_2$),
the argument leading up to \eqref{TrialityPf-Phi(alpha2)} shows that 
$\sigma(\alpha_2)$ is a
root of~$\aH$. Thus, the set of roots of~$\aH$ 
is invariant under~$\sigma$.

Since $\aS \subset \aG_\mu \subset \aH$, we know $\aH$
is $F$-isotropic. Also, since $\aH$ contains $\aM'$, it is a
trialitarian group.
\end{proof}

\begin{cor} \label{E6(A5kernel)notmin}
If\/ $\aG$ is of type $E_6$, then\/ $\aG$ is not minimal.
\end{cor}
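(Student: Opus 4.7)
The plan is to assume toward a contradiction that $\aG$ is a minimal $F$-group of type~$E_6$ and to exhibit, in each of the possible $F$-indices, a proper, isotropic, almost simple $F$-subgroup whose local rank is at least~$2$ at every $v \in \SG$. Since no group of type~$E_6$ is isogenous to $\aSL_3$ (the absolute ranks differ), Lemma~\ref{FrankMust1} forces $\Frank \aG = 1$. The Tits Classification then leaves only the two Tits indices displayed at the start of~\S\ref{E6Sect}, namely $\out2E{6,1}^{29}$ and $\out2E{6,1}^{35}$, and I would treat these two cases separately.

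In the case $\out2E{6,1}^{29}$, the work is already done by Theorem~\ref{E6(D4kernel)}: the $F$-subgroup $\aH$ of type $\out2A5$ produced there is absolutely almost simple, has absolute rank~$5$ (so is proper in~$\aG$), and satisfies $\locrank \aH \ge 2$ at every archimedean place of~$F$, hence at every $v \in \SG$. This $\aH$ directly witnesses the failure of minimality, so we are done in this case.

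In the case $\out2E{6,1}^{35}$, I would first apply Theorem~\ref{E6(A5kernel)} to produce an isotropic, simply connected, absolutely almost simple $F$-subgroup $\aH \subsetneq \aG$ of type $\out3D4$ or~$\out6D4$. A maximal $F$-split torus of $\aG$ already lies inside~$\aH$ (indeed $\aS \subset \aG_\mu \subset \aH$ in the notation of that proof), which together with $\Frank \aG = 1$ forces $\Frank \aH = 1$, so Theorem~\ref{TrialityHasSL2} is applicable to~$\aH$ and yields an $F$-subgroup of~$\aH$ (hence of~$\aG$) that is isogenous to $\res{K/F} \aSL_2$ for some extension~$K/F$ of degree~$4$. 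A short check then shows that at every archimedean place~$v$ of~$F$ at least two places of~$K$ lie above~$v$ (the tightest cases are two complex places above a real~$v$, or four real places above a real~$v$), so the Weil-restricted group has local rank $\ge 2$ at every $v \in \SG$; this again contradicts minimality.

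The only step that takes any real thought is the chaining in the second case: Theorem~\ref{E6(A5kernel)} by itself produces a trialitarian subgroup but says nothing about its local ranks, so one must invoke Theorem~\ref{TrialityHasSL2} on that subgroup and then verify the elementary fact about decomposition of archimedean places in a degree-$4$ extension. Everything else is a matter of quoting the right earlier result in the right Tits-index case.
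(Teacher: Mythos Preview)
Your proposal is correct and follows essentially the same route as the paper: split into the two Tits indices, invoke Theorem~\ref{E6(D4kernel)} for $\out2E{6,1}^{29}$, and chain Theorem~\ref{E6(A5kernel)} with Theorem~\ref{TrialityHasSL2} for $\out2E{6,1}^{35}$. The paper likewise notes that the $\out2E{6,1}^{35}$ case follows from Theorem~\ref{TrialityHasSL2}, and in addition offers a short alternative argument (observing that the circled root~$\alpha_2$ in the trialitarian $\aH$ over~$F$ remains circled over~$F_v$, and the Tits classification of~$D_4$ over~$\real$ then forces $\locrank\aH\ge2$) that bypasses the degree-$4$ place-counting you do.
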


\begin{proof}
The conclusion is immediate from Theorem~\ref{E6(D4kernel)} if $\aG$ is of type $\out2E{6,1}^{29}$.

When $\aG$ is of type $\out2E{6,1}^{35}$, it suffices to observe that the subgroup $\aH$ provided
by Theorem~\ref{E6(A5kernel)} satisfies $\locrank \aH \ge 2$, for every archimedean
place~$v$ of~$F$. This follows from Theorem~\ref{TrialityHasSL2}, but it is also
easy to give a short direct proof. Note that, because $\aH$ is a trialitarian group of rank~$1$, its
Tits index is as shown in Figure~\ref{TrialityTitsFig}; thus, the root $\alpha_2$ is circled.
So $\alpha_2$ is also circled in the Tits index of~$\aH$ 
over~$F_v$. From the Tits Classification \cite[pp.~56--58]{[T66]}
of groups of type $D_4$ over~$\real$,
we see that this implies at least two roots are circled,
so $\locrank \aH \ge 2$.
\end{proof}

\begin{acks}
We thank an anonymous referee for detailed comments that clarified and shortened some of the proofs, and strengthened Theorem~\ref{E6(A5kernel)}.
We also thank Gopal Prasad for suggesting that our main theorem should be proved over general number fields. D.W.M.\ would like to thank Skip Garibaldi for helpful discussions.
\end{acks}

\end{document}